\newtheorem{theorem}{Theorem} 
\newtheorem{proposition}[theorem]{Proposition} 
\newtheorem{remark}[theorem]{Remark}
\newcommand{\dis}{\displaystyle}
\newcommand{\R}{{\mathbb R}} 
\newcommand{\C}{{\mathbb C}}
\newcommand{\dd}{{\rm d}}
\title[Numerical study ]
{A numerical study of vortex nucleation in 2d rotating Bose--Einstein condensates}
\author[G. Dujardin]{Guillaume Dujardin}
\address[G. Dujardin]{Univ. Lille, Inria, CNRS, UMR 8524 - Laboratoire Paul Painlev\'e, F-59000 Lille}
\author[I. Lacroix-Violet]{Ingrid Lacroix-Violet}
\address[I. Lacroix-Violet]{Universit\'e de Lorraine, CNRS, IECL, F-54000 Nancy, France}
\author[A. Nahas]{Anthony Nahas}
\address[A. Nahas]{Univ. Lille, CNRS, UMR 8524, Inria - Laboratoire Paul Painlev\'e, F-59000 Lille}
\definecolor{green}{rgb}{0.1,0.7,0.4}
\begin{document}


\begin{abstract}
  This article implements a numerical method for the minimization under constraints
of a discrete energy modeling multicomponents rotating Bose--Einstein condensates
in the regime of strong confinement and with rotation.
Moreover, this method allows to consider both segregation and coexistence regimes between
the components.
The method includes a discretization of a continuous energy in space dimension 2 and
a gradient algorithm with adaptive time step and projection for the minimization.
It is well known that, depending on the regime, the minimizers may display different
structures, sometimes with vorticity
(from singly quantized vortices, to vortex sheets and giant holes).
The goal of this paper is to study numerically the structures of the minimizers.
In order to do so, we introduce a numerical algorithm for the computation of the indices
of the vortices, as well as an algorithm for the computation of the indices of vortex sheets.
Several computations are carried out, to illustrate the efficiency of the method,
to cover different physical cases, to validate recent theoretical results
as well as to support conjectures.
Moreover, we compare this method with an alternative method from the literature.
\end{abstract}


\maketitle
{\small\noindent} 
{\bf AMS Classification.} {35Q40, 65N35, 65Z05.}

\bigskip\noindent{\bf Keywords.} {Bose--Einstein condensation, Gross--Pitaevkii energy,
  segregation and coexistence regimes, minimization under constraints,
  gradient algorithm, vortex detection, vortex quantization, numerical experiments.}


\section{Introduction}
Bose--Einstein condensation was predicted by Satyendra Nath Bose \cite{Bose24}
and Albert Einstein \cite{Einstein25} in 1924 and 1925.
It describes a state of matter in which separate atoms or subatomic particles,
cooled to near absolute zero (a few $\mu K$), coalesce into a single quantum mechanical entity—that is,
one that can be described by a single wave function—on a near-macroscopic scale.
Bose--Einstein condensates were first realized experimentally in 1995 \cite{Ketterle95,Anderson95}.
When the so-called Bose--Einstein condensates (BEC) are set to rotation,
topological defects often manifest themselves as vortices that correspond
to zeros of the wave function with phase circulation.
This phenomenon was first observed in two components BEC \cite{PhysRevLett.83.2498}.
When a BEC is set to a high rotation in a strong confinement regime,
the vortices align and form unique structures.
In the case of a single component condensate, we observe singly quantized vortices,
forming triangular lattices once they are numerous \cite{triangular_lattice,PhysRevA.84.033611}.
In the case of two components condensate, depending on the interaction between the two components,
many structures may appear.
For example, we can observe coreless vortices, which refer to having singly quantized vortices
in one component while having a corresponding peak in the second component \cite{Mizushima_2004},
or vortex sheets \cite{PhysRevA.84.033611,KASAMATSU_2005}. 

Different models of BEC with one or several components have already been studied
in the mathematical literature.
For example, the minimization of the Gross--Pitaevskii functional in $\R^2$ is studied
theoretically in \cite{IGNAT2006260} (see also references therein).
In \cite{AftalionSandier2020,aftalion:hal-00960881}, the authors study different structures
of a BEC in a strong confinement and coupling regime in a bounded domain of $\R^2$.
Several methods have been developed for the numerical computation of approximations
of minimizers of Gross--Pitaevskii energies.
For example, in \cite{BaoTang2003,Bao2004,BaoDu2004,BAO2006612,BAO2006836,BaoCaiWang2010,BaoCai2011,BaoCai2013,MR3145291,BaoJiangTangZhang2015},
the authors develop numerical methods in several physical contexts
and with several space discretizations,
which require solving a linear system at each time-step.
A theoretical analysis of the convergence of some of these methods
has been carried out in several simple contexts (see for example \cite{FaouJezequel17}
for the classical Gross--Pitaevskii energy in one space dimension, for the one component case,
without rotation nor confinement, in a neighborhood of the ground state).
Another option is to use Sobolev gradients, as opposed to $L^2$ gradients,
as developed in \cite{danaila:hal-00432061}.
	
In this article, we study numerically the behaviour of a rotating Bose--Einstein condensate (BEC)
in two dimensions in a strong confinement regime based on the numerical minimization
of the Gross--Pitaevskii (GP) energy \eqref{eq:model}.
We consider the both cases of one component and two components condensates.
For two components condensates,
we consider both segregation and coexistence regimes between the two components. 
We introduce a new discretization for the GP energy using the Fast Fourier transformation (FFT) scheme.
We use an {\it explicit} $L^2$ gradient method with adaptive step and projection
for the minimization of our discrete energy under constraints.
We derive a stopping criterion based on the evaluation of the gradient of the energy
on the constrained manifold using the residue of the Euler-Lagrange equation corresponding to the
constraints, which we use at the discrete level.
Moreover we provide a post processing algorithm for computing the indices
of the vortices and of the vortex sheets of the minimizers.
Finally, we compare the efficiency of our explicit gradient method with projection,
named EPG, to the method used in GPELab (\cite{GPE1,GPE2}).
The numerical results of the test cases in this paper will illustrate how an explicit projected
gradient method together with an energy discretization allowing for the use of FFT
in the computation of its gradient makes it possible to outperform (linearly)
implicit methods such as that of GPELab. 

The outline of this paper is as follows.
We introduce the continuous model for rotating Bose--Einstein condensates with two components
in Section \ref{sec:model} and we recall the different regimes for one component
and two components condensates. 
In order to numerically study these regimes,
we discretize in Section \ref{sec:discretmin} the continuous Gross--Pitaevskii energy.
Our discretization uses the discrete Plancherel formula and allows for using FFT
in the computation of the gradient.
We implement in this section a gradient method for the minimization of the energy
with a projection step to take the constraints into account.
One of the interests of this method is that it allows for the derivation of a stopping criterion
which we develop in the same section.
We develop in Section \ref{sec:post_proc} two post processing algorithms
for the computation of indices.
The first deals with vortices and the second deals with vortex sheets.
We present numerical results in Section \ref{sec:numericalresults} for the regimes
described in Section \ref{sec:model}.
In particular, we validate numerically recent theoretical results and we support some conjectures
as for example the existence of vortex sheets in a segregation regime.
Section \ref{sec:comparaisonGPE} is devoted to the comparison of the efficiency
of the EPG method with that of GPELab \cite{GPE1,GPE2}.	

\section{The model and the regimes}
\label{sec:model}

\subsection{The model}
We consider the following model for the energy of a two components rotating Bose--Einstein condensate
in dimension $2$ in the limit of strong confinement and strong rotation
studied in \cite{AftalionSandier2020}. This model reads
\begin{equation*}\label{eq:model}
  {\mathcal E}_{\varepsilon,\delta}^\Omega (u_1,u_2) = \sum_{\ell=1}^2 \frac12 \int_{D} \left\|\nabla u_\ell-i\Omega u_\ell
  {\bold x}^\perp
  \right\|^2 \dd x\dd y + W_{\varepsilon,\delta}(u_1,u_2),
\end{equation*}
where $D\subset\R^2$ is the bounded physical domain of interest,
$\Omega\in\R$ is the rotation speed, $\varepsilon$ and $\delta$ are positive constants,
${\bold x}^\perp=(-y,x)$, $u_1,u_2\in H^1(D,\C)$
are the wave functions related to each component of the condensate.
Moreover, $W_{\varepsilon,\delta}$ is the confining part of the energy defined as
\begin{eqnarray*}
  W_{\varepsilon,\delta}(u_1,u_2) &=&
  \frac{1}{4\varepsilon^2} \int_D (\rho(r)-|u_1|^2)^2 \dd x \dd y
  +\frac{1}{4\varepsilon^2} \int_D (\rho(r)-|u_2|^2)^2 \dd x \dd y \\
  &{}& + \frac{\delta}{2\varepsilon^2} \int_D |u_1|^2 |u_2|^2 \dd x \dd y
  - \frac{1}{4\varepsilon^2} \int_D \rho^2(r)\dd x \dd y,
\end{eqnarray*}
where $\rho$ is a function of $r=\sqrt{x^2+y^2}$ to be defined later,
$\delta>0$ measures the stength of the interaction
between the two components, $1/\varepsilon^2$ measures the strength of the internal
interaction in each component of the condensate.
In this paper, we take $D$ as the disk of radius $R>0$ centered at the origin of $\R^2$.
Moreover, we consider $\rho\equiv 1$ in $D$, which corresponds to a flat trap,
as in \cite{Correggi2007anharmonic,Rougerie2011} (see also \cite{AftalionSandier2020}).

\begin{remark}
  \label{rem:centrifuge}
  A straightfoward computation yields for all $\ell\in\{1,2\}$,
\begin{eqnarray}
 \lefteqn{ \frac12 \int_{D} \left\|\nabla u_\ell-i\Omega u_\ell
  {\bold x}^\perp
\right\|^2 \dd x\dd y  } \nonumber\\
&=& \frac12 \int_D \|\nabla u_\ell\|^2\dd x\dd y
- \Omega \int_D \Re\left(iu_\ell{\bold x}^\perp\cdot \overline{\nabla u_\ell}\right)\dd x \dd y
+ \frac{\Omega^2}{2} \int_D r^2|u_\ell|^2 \dd x \dd y.\label{eq:identiteremarquable}
\end{eqnarray}
\end{remark}
Following Remark 1 of \cite{AftalionSandier2020} and the one component analysis carried out
in \cite{Rougerie2011}, in the regime $|\Omega|<<1/\varepsilon$, the contribution of the third term in
\eqref{eq:identiteremarquable} plays no role in the asymptotics $\varepsilon\to 0$.
Therefore, from now on and unless stated otherwise, we shall consider in the regime
$|\Omega|<<1/\varepsilon$ the minimization of the energy
\begin{eqnarray}
  \label{eq:defNRJ}
  E_{\varepsilon,\delta}^\Omega (u_1,u_2) & = & \sum_{\ell=1}^2 \left[
  \frac12 \int_{D} \left\|\nabla u_\ell\right\|^2 \dd x \dd y
  -\Omega \int_D \Re\left(i u_\ell {\bold x}^\perp \cdot \overline{\nabla u_\ell}\right)
  \dd x\dd y
  \right]
  + W_{\varepsilon,\delta}(u_1,u_2)\\ \nonumber
  & = & {\mathcal E}_{\varepsilon,\delta}^\Omega(u_1,u_2)
  - \frac{\Omega^2}{2} \int_D r^2(|u_1|^2+|u_2|^2) \dd x \dd y.
\end{eqnarray}

\noindent
Our aim is to compute, in the regime $\varepsilon\to 0$,
minimizers of the energy $E^\Omega_{\varepsilon,\delta}$ defined in \eqref{eq:defNRJ}
(and {\it not} directly $\mathcal E^\Omega_{\varepsilon,\delta}$) in $H^1_0(D,\C)$ with the constraints
\begin{equation}
  \label{eq:contraintes}
  \int_D |u_1|^2 \dd x \dd y = M N_1,
  \qquad \text{and} \qquad
  \int_D |u_2|^2 \dd x \dd y = M N_2,
\end{equation}
where $M=\int_{D}\rho(r) \dd x \dd y$, and $N_1,N_2\geq 0$ with $N_1+N_2=1$.
Note that, in the sequel, $\delta>0$ and $\Omega\in\R$ may
or may not depend on $\varepsilon$ when considering a sequence of minimizing problems with smaller
and smaller $\varepsilon$.

Note that one has also
\begin{equation}
  \label{eq:defW}
  W_{\varepsilon,\delta}(u_1,u_2) =
  \frac{1}{4\varepsilon^2} \int_D (\rho(r)-|u_1|^2-|u_2|^2)^2 \dd x \dd y
  + \frac{\delta-1}{2\varepsilon^2} \int_D |u_1|^2 |u_2|^2 \dd x \dd y.
\end{equation}
In view of this expression of $W_{\varepsilon,\delta}$,
we refer to the case $\delta>1$ as the segregation regime since minimizing the energy
tends to split the two components in this case,
and to the case $\delta<1$ as the coexistence regime since
minimizing the energy tends to mix the two components of the condensate in this other case.

\subsection{The different regimes}

We recall in this section the theoretical results obtained in
\cite{Rougerie2011, AftalionSandier2020, GoldmanMerlet2017, PhysRevLett.91.150406, PhysRevA.84.033611}.
These results describe the asymptotics of the minimizers of \eqref{eq:defNRJ} in different
regimes.
These regimes depend on the behavior of the rotation speed $\Omega$ in the strong confinement
limit ($\varepsilon\to 0$) in one component condensates (see \ref{subsubsec:onecomponent}).
They also depend on the value of the segregation/coexistence parameter ($\delta\geq1$ or $\delta<1$)
in two components condensates (see \ref{subsubsec:twocomponent} and \ref{subsubsec:twocomponentcoex}).

\subsubsection{One component condensates}
\label{subsubsec:onecomponent}

According to \cite{Rougerie2011}, for one component condensates ($N_1=1,N_2=0$),
the behaviour of the structure of the vortices of the minimizers of \eqref{eq:defNRJ}
with constraints \eqref{eq:contraintes} in the regime $\varepsilon\to 0$ changes
with the dependency of the rotation speed $\Omega=\Omega_\varepsilon$ with respect to $\varepsilon$.
Namely, there exists three critical rotation speeds
$\Omega_{\varepsilon}^1<<\Omega_{\varepsilon}^2<<\Omega_{\varepsilon}^3$ such that
\begin{itemize}
\item if $\Omega_\varepsilon<\Omega_{\varepsilon,1}$, then minimizers $u_1^\varepsilon$
  have no vortices,
\item if $\Omega_{\varepsilon}^1<\Omega_\varepsilon<\Omega_{\varepsilon}^2$, then the vortices
  of minimizers $u_1^\varepsilon$ appear on a hexagonal lattice and are singly quantized,
\item if $\Omega_{\varepsilon}^2<\Omega_\varepsilon<\Omega_{\varepsilon}^3$, the centrifugal
  force comes into play: a hole appears in minimizers $u_1^\varepsilon$ and the condensate
  looks like an annulus, with vortices located on a lattice on the annulus,
\item if $\Omega_{\varepsilon}^3<\Omega_\varepsilon$, the centrifugal force is so important
  that the vortices retreat in the central hole of the condensate, creating a central giant vortex
  with high index.
\end{itemize}
Moreover, one has
\begin{equation}
\label{eq:char_speed}
  \Omega_{\varepsilon}^1 \sim \log(1/\varepsilon),
  \qquad
  \Omega_{\varepsilon}^2 \sim \frac{1}{\varepsilon},
  \qquad \text{and} \qquad
  \Omega_{\varepsilon}^3 \sim \frac{1}{\varepsilon^2 \log(1/\varepsilon)}.
\end{equation}
Note that $\delta$ plays no role in this case since $u_2\equiv 0$ because $N_2=0$.

According to Remark \ref{rem:centrifuge}, the regimes $\Omega_\varepsilon<\Omega_\varepsilon^2$
are similar for minimizers of $\mathcal E^{\Omega_\varepsilon}_{\varepsilon,\delta}$
and $E^{\Omega_\varepsilon}_{\varepsilon,\delta}$. In the regimes $\Omega^2_\varepsilon<\Omega_\varepsilon$,
the annulus behaviour is due to the centrifugal force in $E^{\Omega_\varepsilon}_{\varepsilon,\delta}$.

One of the goals of this paper is to confirm numerically the four regimes above
(described in \cite{Rougerie2011}). To do so, we introduce a numerical algorithm in Section \ref{sec:discretmin} and we use it to provide numerical simulations in Section \ref{subsec:simulation_onecomponent}.

\subsubsection{Two components condensates in the segregation regime ($\delta>1$)}
\label{subsubsec:twocomponent}
In the recent paper \cite{AftalionSandier2020}, the authors consider two components condensates
in the segregation case $\delta>1$.
For $N_1\in(0,1)$ (recall that $N_2=1-N_1$), they introduce the minimizing perimeter
\begin{equation*}
  \ell_{N_1} = \min_{\stackrel{\omega\subset D}{|\omega|=N_1}} \text{per}(\omega),
\end{equation*}
and they perform an analysis of the minimizers of $E_{\varepsilon,\delta_\varepsilon}^{\Omega_\varepsilon}$ in the regime $\varepsilon \to 0$ depending on whether $\Omega_\varepsilon=0$ or not.  

First, they address the minimization of \eqref{eq:defNRJ} with constraints \eqref{eq:contraintes}
when $\Omega=0$. In this case, the squared moduli of the minimizers $u_1^\varepsilon$
and $u_2^\varepsilon$ tend to 1 in two separate regions of $D$ and
the authors of \cite{AftalionSandier2020} prove that their sum
$v_\varepsilon^2=|u_1^\varepsilon|^2+|u_2^\varepsilon|^2$
and the normalized energy $\varepsilon E^0_{\varepsilon,\delta_{\varepsilon}}$
have the following behaviour
\begin{itemize}
\item in the regime $\delta_{\varepsilon}\varepsilon^2\to +\infty$ (strong segregation regime \cite{AftalRoyo2015}),
  then $\inf_D v_\varepsilon^2$ tends to $0$ and
  $\varepsilon E^0_{\varepsilon,\delta_\varepsilon}$ tends to some constant times $\ell_{N_1}$;
\item in the regime $\varepsilon\to 0$ with $\delta>1$ fixed, then $\inf_D v_\varepsilon^2$
  tends to some number between $0$ and $1$ and
  $\varepsilon E^0_{\varepsilon,\delta_\varepsilon}$ tends to some constant (which depends on $\delta$)
  times $\ell_{N_1}$ \cite{GoldmanRoyo2015};
\item in the regime $\delta_\varepsilon\to 1$ with $\tilde\varepsilon = \varepsilon/\sqrt{\delta_\varepsilon-1} \to 0$, the rescaled energy $\tilde\varepsilon E^0_{\varepsilon,\delta_\varepsilon}$ tends
  to $\ell_{N_1}/2$ and it is expected that $\inf_D v^\varepsilon$ tends to $1$ \cite{GoldmanMerlet2017}.
\end{itemize}

Second, they address the minimization of \eqref{eq:defNRJ} with constraints \eqref{eq:contraintes} when
$\Omega=\Omega^\varepsilon\to +\infty$ as $\varepsilon$ tends to $0$. With $\tilde \varepsilon$ defined
as above, they consider a regime where $\delta=\delta_\varepsilon \to 1$ and $\tilde \varepsilon \to 0$
as $\varepsilon$ tends to $0$ and they prove that
\begin{itemize}
\item there exists two constants $C_1,C_2>0$ such that, for all $i$,
  if $\Omega^\varepsilon>C_i\log(1/\tilde\varepsilon)$, then the infimum of the
  limiting density $|u_i^\varepsilon|^2$ vanishes as $\varepsilon$ tends to $0$;
\item in the regime of moderate rotational speed
  $\log(1/\tilde\varepsilon)<<\Omega^\varepsilon<<1/\tilde\varepsilon$, the limiting density
  $|u_i^\varepsilon|^2$ is uniform in each region, and hence does not depend on the shape of the region;
\item for higher rotational speeds $1/(\tilde\varepsilon \log(1/\tilde\varepsilon))<<\Omega^\varepsilon<<1/\tilde\varepsilon^2$, the leading order
  in the energy $E^{\Omega^\varepsilon}_{\varepsilon,\delta_\varepsilon}$ is the vortex energy,
  and the authors of \cite{AftalionSandier2020} conjecture the possibility of observing vortex sheets.

\end{itemize}

{\subsubsection{Two components condensates in the coexistence regime ($\delta < 1$)}
  \label{subsubsec:twocomponentcoex}

  In the regime $\delta <1$, the two components still have mass in the region where $\rho>0$,
  but the supports of the two components tend to overlap
  (see \cite{PhysRevLett.91.150406,PhysRevA.84.033611}).
  In this regime ($\delta <1$), depending on the rotational speed $\Omega$,
  we should observe four different qualitative behaviours
  (see \cite{PhysRevLett.91.150406,PhysRevA.84.033611})
  for the minimizers when $\varepsilon$ tends to $0$:
	\begin{itemize}
        \item The first behaviour ($\delta\in[0,1)$), with a very low velocity $\Omega$,
          is when the two components coexist and there are no vortices: each component
          has mass in the disc where $\rho>0$ and the profile of the components depends
          on $N_1$ and $N_2$.
	\item The second behaviour, when $\delta=0$ (which means there is no interactions
          between the two components), we should observe,
          depending on the (high) rotation speed $\Omega$,
          the existence of a triangular vortex lattice.
        \item As $\delta\in(0,1)$ increases, the positions of vortex cores in one component
          gradually shift from those of the other component and the triangular lattices are distorted.
          Eventually, in this case, after a certain value of $\delta$,
          the vortices in each component form a square lattice.
	\item The last behaviour corresponds to $\delta\rightarrow 1$
          (when $\varepsilon\rightarrow 0$).
          In that case, we should observe either stripe or double-core vortex lattices
          in the minimizers.
	\end{itemize} 
}
\section{The discretisation of the problem
  and the minimization method}\label{sec:discretmin}

Minimizing the energy $E_{\varepsilon,\delta}^\Omega$ defined in \eqref{eq:defNRJ}
under the constraints \eqref{eq:contraintes} is a continuous minimization problem over
an infinite dimensional manifold that we replace in this section by a discrete minimization
problem over a finite dimensional manifold.
We consider a gradient descent method with projection over the constrained manifold,
which is a first-order iterative optimization algorithm for finding a local minimum
of a differentiable function. We name this method EPG.
By calculating the gradient of the energy $E_{\varepsilon,\delta}^\Omega$,
we are able to approach more and more towards the solution of the discrete problem after each
iteration of the method.
In Section \ref{subsec:discrNRJ}, we describe the discretization of the energy
$E_{\varepsilon,\delta}^\Omega$ defined in \eqref{eq:defNRJ}.
Then, we compute its gradient in Section \ref{subsec:gradient}.
We establish a criterion for the minimization of the energy on the finite dimensional manifold
in Section \ref{subsec:criterion}.
We propose a full description of the minimization method in Section \ref{subsec:methodegradient}
as a conclusion.

\subsection{Discretization of the energy}
\label{subsec:discrNRJ}

For the discretization of the energy defined in \eqref{eq:defNRJ}, we decide to include
the disk $D$ of radius $R$ centered at the origin of $\R^2$ in a larger square of side length $2L$
centered at the origin.
In addition, we extend the function $\rho$ used to define the energy in \eqref{eq:defNRJ} to the
square in such a way that it takes (very) negative values outside $D$.
By imposing homogeneous Dirichlet conditions on the boundary of the square, such
an extension of $\rho$ should, in the regime $\varepsilon \to 0$, lead to minimizers with very
small squared modulus outside the disk, the restriction to the disk of which
can be considered as approximations to the continuous
minimizers of $E^\Omega_{\varepsilon,\delta}$ defined in \eqref{eq:defNRJ}.
This is due in particular to the first term in the definition of $W_{\varepsilon,\delta}$
in \eqref{eq:defW}.

Therefore, we choose $L>R$ so that the disc $D$ of radius $R$ centered at the origin of $\R^2$
is included in the square $[-L,L]^2$.
Then, we discretize $[-L,L]^2$ with $N+2$ equidistant points with respect to the $x$-axis
and $N+2$ equidistant points with respect to the $y$-axis for some integer $N$.
We do this by setting $\delta_x=\delta_y=\frac{2L}{N+1}$ and
$$x_n=-L+n\delta_x, \qquad \text{and} \qquad y_k=-L+k\delta_y,$$
for $n, k \in\{0,...,N+1\}$.

We use the letter $\psi$ to denote the discrete counterpart to the continuous wave functions denoted by $u$ in Section \ref{sec:model}.
As explained above, we think of $\psi^\ell_{n,k}$ as an approximation of $u_\ell(x_n,y_k)$
for $\ell=1,2$. In matrix form, we use the notation:
\begin{align*}
    \psi^\ell &= \begin{pmatrix}
           \psi^\ell_{0,0} & \psi^\ell_{0,1} & ... & \psi^\ell_{0,N+1}\\
           \psi^\ell_{1,0} & \psi^\ell_{1,1} & ... & \psi^\ell_{1,N+1} \\
           \vdots & \vdots & \vdots & \vdots\\
           \psi^\ell_{N+1,0} & \psi^\ell_{N+1,1} & ... & \psi^\ell_{N+1,N+1}
         \end{pmatrix} \in \C^{(N+2)^2},
\end{align*}
with the convention that $\psi^\ell_{n,k}=0$ if either $n=0$ or $k=0$ or $n=N+1$ or $k=N+1$.
In order to separate real and imaginary parts of the unknowns,
we set $\psi^\ell_{n,k}=p^\ell_{n,k}+iq^\ell_{n,k}$ for $\ell=1,2$, and set accordingly
$\psi^\ell=P^\ell+i Q^\ell$.
This allows us to write functions of the complex-valued variables $\psi^\ell$ as functions
of the real-valued variables $P^\ell$ and $Q^\ell$.

We decide to use the discrete Fourier transform and its inverse for the discretization
of the terms in the energy $E_{\varepsilon,\delta}^\Omega$ defined in \eqref{eq:defNRJ}
that involve gradients.
This choice allows the use of Fast Fourier Transform algorithms for the computation of the gradient
of the discrete energy (see section \ref{subsec:gradient}).
To do so, we set the following definitions, which are related to that of Python Numpy.
For $v \in \mathbb{C}^{(N+2)^2}$ and for $n,p,k,q\in \{0,\dots,N+1\}$,
the discrete Fourier transforms of $v$ in the $x$- and $y$-direction and their inverses are given by:
\begin{itemize}
	\item $\big(\textbf{fft}_x(v)\big)_{p,k}=\sum_{m=0}^{N+1} v_{m,k}e^{i\pi m}e^{-2\pi i\frac{mp}{N+1}}$,
	\item $\big(\textbf{ifft}_x(v)\big)_{n,k}=\frac{1}{N+1}e^{-i\pi n}\sum_{p=0}^{N+1} v_{p,k} e^{2\pi i\frac{np}{N+1}}$,
	\item $\big(\textbf{fft}_y(v)\big)_{n,q}=\sum_{l=0}^{N+1} v_{n,l}e^{i\pi l}e^{-2\pi i\frac{lq}{N+1}}$,
	\item $\big(\textbf{ifft}_y(v)\big)_{n,k}=\frac{1}{N+1}e^{-i\pi k}\sum_{q=0}^{N+1} v_{n,q} e^{2\pi i\frac{kq}{N+1}}$.
\end{itemize}

\begin{remark}
  Note that, with the definitions above, the discrete Fourier transforms are {\it not}
  inverses by pairs. Indeed, we have for all $v$,
  $\textbf{ifft}_x(\textbf{fft}_x)(v)=v+\mathcal O(\delta x)$
  and $\textbf{ifft}_y(\textbf{fft}_y)(v)=v + \mathcal O(\delta y)$.
  We will not need any such inversion formula in this paper however.
\end{remark}

For the discretization of the rotational energy, we define $X,Y, \Xi,\Lambda \in \mathbb{R}^{(N+2)^2}$ for all $n,k\in \{0,\hdots,N+1\}$ by
$$X_{n,k}=x_n,\quad Y_{n,k}=y_k, \quad \Xi_{n,k}=\xi_n,\quad \text{and} \quad \Lambda_{n,k}=\lambda_k,$$
with
$$\xi_k=-\frac{\pi(N+1)}{2L}+k\delta_\xi, \quad \text{and} \quad \lambda_n=-\frac{\pi(N+1)}{2L}+n\delta_\lambda,$$ where $\delta_\xi=\delta_\lambda=\frac{\pi}{L}.$ Observe that all the columns of the matrix $\Xi$ are equal to the vector $\xi=(\xi_k)_{0\leq k \leq N+1}$ and all the rows of the matrix $\Lambda$ are equal to the vector $\lambda=(\lambda_n)_{0 \leq n \leq N+1}$.
Moreover, we denote by $\ast$ the term by term multiplication operator between 2 matrices.

We define $E^\Delta_{\varepsilon,\delta}(\psi^1,\psi^2)$ as a discrete counterpart of $E_{\varepsilon,\delta}^\Omega (u_1,u_2)$ by setting for $\psi^1, \psi^2 \in \C^{(N+2)^2}$
\begin{equation}\label{energie_discr}
    E^\Delta_{\varepsilon,\delta}(\psi^1,\psi^2)=\sum_{\ell=1,2}\left(({E_{cin}})^\Delta(\psi^\ell)+({E_{rot}})^\Delta_{\varepsilon}(\psi^\ell)\right)+({E_{W}})^\Delta_{\varepsilon,\delta}(\psi^1,\psi^2),
\end{equation}
where the superscript $\Delta$ indicates that these energies depend on discrete variables.
Let us describe the discretized energy terms which appear above:
\begin{itemize}
    \item $(E_{cin})^\Delta$ corresponds to the kinetic energy and is defined by
    \begin{equation*}
        (E_{cin})^\Delta (\psi^\ell)=\frac{\delta_x^2}{2}\sum_{n,k=0}^{N+1}\bigg(\Big|\textbf{ifft}_x\big(i\Xi\ast\textbf{fft}_x(\psi^\ell)\big)\Big|^2+\Big|\textbf{ifft}_y\big(i\Lambda\ast\textbf{fft}_y(\psi^\ell)\big)\Big|^2\bigg)_{n,k},
    \end{equation*}
    \item $(E_{rot})_\varepsilon^\Delta$ corresponds to the rotational energy and is defined by
    \begin{eqnarray*}
\lefteqn{(E_{rot})_\varepsilon^\Delta (\psi^\ell) }\\
&=&-\Omega_\varepsilon \delta_x^2 \sum_{n,k=0}^{N+1}\Re\left(-i\overline{\psi^\ell}\ast\Big[-Y\ast \textbf{ifft}_x\big(i\Xi\ast\textbf{fft}_x({\psi^\ell})\big)+X\ast \textbf{ifft}_y\big(i\Lambda\ast\textbf{fft}_y({\psi^\ell})\big)\Big]\right)_{n,k},
\end{eqnarray*}
    \item $(E_{W})_{\varepsilon,\delta}^\Delta$ corresponds to the confinement energy and is defined by
\begin{equation*}
    \begin{split}
    (E_{W})_{\varepsilon,\delta}^\Delta(\psi^1,\psi^2)&=\frac{\delta_x^2}{4\varepsilon^2}\sum_{n,k=0}^{N+1}\bigg(\rho(r_{n,k})-|\psi_{n,k}^1|^2-|\psi_{n,k}^2|^2\bigg)^2\\
    &+\frac{\delta_x^2(\delta-1)}{2\varepsilon^2}\sum_{n,k=0}^{N+1} |\psi_{n,k}^1|^2|\psi_{n,k}^2|^2 ,   
    \end{split}
\end{equation*}
{ where $r_{n,k}=\sqrt{x_n^2+y_k^2}$.}
\end{itemize}

\subsection{Computation of the gradient of the discrete energy \eqref{energie_discr}}
\label{subsec:gradient}

Each energy term in \eqref{energie_discr} is a function of the $4N^2$ real variables
$(P^1_{n,k})_{1\leq n,k\leq N}$, $(P^2_{n,k})_{1\leq n,k\leq N}$, $(Q^1_{n,k})_{1\leq n,k\leq N}$
and $(Q^2_{n,k})_{1\leq n,k\leq N}$ using the definitions from Section \ref{subsec:discrNRJ}. 
Let us compute the gradient of each of these energy terms (still using the convention
$p_{n,k}=q_{n,k}=0$ if $n=0$ or $k=0$ or $n=N+1$ or $k=N+1$)
with respect to these variables for the usual scalar product on $\R^{4N^2}$. 
Let $V_{conf} \in \mathbb{R}^{(N+2)^2}$ be defined as:
$$V_{conf}=\begin{pmatrix}
\rho (r_{0,0}) & \rho (r_{0,1}) & \hdots & \rho (r_{0,N+1})\\
\rho (r_{1,0}) & \rho (r_{1,1}) & \hdots & \rho (r_{1,N+1})\\
\vdots\\
\rho (r_{N+1,0}) & \rho (r_{N+1,1}) & \hdots & \rho (r_{N+1,N+1})\\
\end{pmatrix}.$$
For $\ell\in\{1,2\}$ we have for the discrete kinetic energy:
\begin{eqnarray*}
\frac{\partial (E_{cin})^\Delta}{\partial P^\ell}(\psi^\ell)=\delta_x^2\Re\Bigg(\left[\textbf{ifft}_x\big(\Xi^{\ast 2}\ast\textbf{fft}_x(\psi^\ell)\big)\right]_{n,k}+\left[\textbf{ifft}_y\big(\Lambda^{\ast 2}\ast\textbf{fft}_y(\psi^\ell)\big)\right]_{n,k}\Bigg)_{1 \leq n,k \leq N}, \\
\frac{\partial (E_{cin})^\Delta}{\partial Q^\ell}(\psi^\ell)=\delta_x^{2}\Re\Bigg(\left[\textbf{ifft}_x\big(-i\Xi^{\ast 2}\ast\textbf{fft}_x(\psi^\ell)\big)\right]_{n,k}+\left[\textbf{ifft}_y\big(-i\Lambda^{\ast 2}\ast\textbf{fft}_y(\psi^\ell)\big)\right]_{n,k}\Bigg)_{1 \leq n,k \leq N}.
\end{eqnarray*}
For $\ell\in\{1,2\}$ we have for the discrete rotational energy:
\begin{eqnarray*}
\frac{\partial (E_{rot})^\Delta_\varepsilon}{\partial P^\ell}(\psi^\ell)=-2\Omega_\varepsilon\delta_x^2\Re\Bigg( \left[X\ast\textbf{ifft}_y\big(\Lambda\ast\textbf{fft}_y(\psi^\ell)\big)\right]_{n,k}-\left[Y\ast\textbf{ifft}_x\big(\Xi\ast\textbf{fft}_x(\psi^\ell)\big)\right]_{n,k}\Bigg)_{1 \leq n,k \leq N},\\
\frac{\partial (E_{rot})^\Delta_\varepsilon}{\partial Q^\ell}(\psi^\ell)=-2\Omega_\varepsilon\delta_x^2\Re\Bigg( -i\left[X\ast\textbf{ifft}_y\big(\Lambda\ast\textbf{fft}_y(\psi^\ell)\big)\right]_{n,k}+i\left[Y\ast\textbf{ifft}_x\big(\Xi\ast\textbf{fft}_x(\psi^\ell)\big)\right]_{n,k}\Bigg)_{1 \leq n,k \leq N}.
\end{eqnarray*}
For $\ell\in\{1,2\}$ we have for the discrete interaction energy:
\begin{eqnarray*}
\frac{\partial (E_{W})^\Delta_{\varepsilon,\delta}}{\partial P^\ell}(\psi^1,\psi^2)=-\frac{\delta_x^2}{\varepsilon^2}\left(\left[P^\ell\ast\Big(V_{conf}-\left| \psi^1\right|^2-\left| \psi^2\right|^2\Big)\right]_{n,k}+(1-\delta)\left[P^\ell\ast|\psi^{3-\ell}|^2\right]_{n,k}\right)_{1 \leq n,k \leq N},\\
\frac{\partial (E_{W})^\Delta_{\varepsilon,\delta}}{\partial Q^\ell}(\psi^1,\psi^2)=-\frac{\delta_x^2}{\varepsilon^2}\left(\left[Q^\ell\ast\Big(V_{conf}-\left| \psi^1\right|^2-\left| \psi^2\right|^2\Big)\right]_{n,k}+(1-\delta) \left[Q^\ell\ast|\psi^{3-\ell}|^2\right]_{n,k}\right)_{1 \leq n,k \leq N}.
\end{eqnarray*}

\subsection{A criterion for the minimization of $E^\Delta_{\varepsilon,\delta}$ under constraints }
\label{subsec:criterion}
We identify $\C^{2N^2}$ with the subspace of $\mathcal{M}_{N+2}(\C)^2$
consisting in pairs of matrices with zero first and last row and column.
Let us denote the usual scalar product on $\C^{2N^2}$ by
$$\dis \langle u,v \rangle=\Re \left(\sum_{n,k=1}^{N}u_{n,k}\widebar{v_{n,k}}\right),$$
and we set $ \dis  \|v\|_\Delta^2=\delta_x^2\langle v,v \rangle$. 
We replace the continuous constraints \eqref{eq:contraintes} with the discrete analogues 
\begin{equation}
    \label{eq:contraintesdiscretes}
    \|\psi^\ell\|_\Delta^2=\delta_x^2\sum_{k=0}^{N+1}\sum_{n=0}^{N+1} |\psi^\ell_{k,n}|^2=MN_\ell\qquad (\ell=1,2).
\end{equation}

In order to derive a stopping criterion for the minimization of $E_{\varepsilon,\delta}^\Delta$
defined in \eqref{energie_discr} under the constraints \eqref{eq:contraintesdiscretes},
we prove the following proposition.

\begin{proposition}
  \label{prop:CNmin}
  If $(\psi^{1*}, \psi^{2*}) \in \mathbb{C}^{(N+2)^2}$ are minimizers of $E_{\varepsilon,\delta}^\Delta$
  under the constraints \eqref{eq:contraintesdiscretes}
  satisfying the homogeneous Dirichlet boundary conditions,
  then the four $N\times N$ real-valued matrices
  $K^\Delta_{1,P}, K^\Delta_{2,P}, K^\Delta_{1,Q}, K^\Delta_{2,Q}$ defined by\footnote{Recall that
    $\ast$ is the term-by-term product defined in Section \ref{subsec:discrNRJ}.}
  \begin{equation*}
        K^\Delta_{\ell,P} =\frac{\partial E_{\varepsilon,\delta}^\Delta}{\partial P^{\ell}}(\psi^{1*},\psi^{2*})-\frac{\delta_x^2}{\|\psi^{\ell*}\|^2_\Delta}\sum_{n,k=1}^{N}\bigg(\frac{\partial E_{\varepsilon,\delta}^\Delta}{\partial P^{\ell}}(\psi^{1*},\psi^{2*}) \ast P^{\ell*} + \frac{\partial E_{\varepsilon,\delta}^\Delta}{\partial Q^{\ell}}(\psi^{1*},\psi^{2*}) \ast Q^{\ell*}\bigg)_{n,k}P^{\ell*}  ,
\end{equation*}
and
\begin{equation*}
  K^\Delta_{\ell,Q} =\frac{\partial E_{\varepsilon,\delta}^\Delta}{\partial Q^{\ell}}(\psi^{1*},\psi^{2*})-\frac{\delta_x^2}{\|\psi^{\ell*}\|^2_\Delta}\sum_{n,k=1}^{N}\bigg(\frac{\partial E_{\varepsilon,\delta}^\Delta}{\partial P^{\ell}}(\psi^{1*},\psi^{2*}) \ast P^{\ell*} +\frac{\partial E_{\varepsilon,\delta}^\Delta}{\partial Q^{\ell}}(\psi^{1*},\psi^{2*}) \ast Q^{\ell*}\bigg)_{n,k} Q^{\ell*},
\end{equation*}
for $\ell\in\{1,2\}$, vanish.
\end{proposition}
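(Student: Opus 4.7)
The plan is to apply the method of Lagrange multipliers. Viewed as an optimization problem in the $4N^2$ real variables $\{P^\ell_{n,k}, Q^\ell_{n,k} : \ell\in\{1,2\},\ 1\leq n,k\leq N\}$ (the homogeneous Dirichlet boundary values being fixed at zero from the outset), the problem consists in minimizing $E_{\varepsilon,\delta}^\Delta$ subject to the two equality constraints
\begin{equation*}
g_\ell(\psi^1,\psi^2) := \|\psi^\ell\|_\Delta^2 - MN_\ell = \delta_x^2 \sum_{n,k=1}^N \bigl((P^\ell_{n,k})^2 + (Q^\ell_{n,k})^2\bigr) - MN_\ell = 0, \qquad \ell\in\{1,2\}.
\end{equation*}
Direct differentiation yields $\partial g_\ell/\partial P^\ell = 2\delta_x^2 P^\ell$ and $\partial g_\ell/\partial Q^\ell = 2\delta_x^2 Q^\ell$, with all other partial derivatives vanishing. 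Since $g_1$ and $g_2$ depend on disjoint blocks of variables and the normalization $N_\ell>0$ forces $(P^{\ell*},Q^{\ell*})\neq 0$, the two constraint gradients are linearly independent at any admissible point, so the constraint qualification condition for Lagrange multipliers holds.

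At a minimizer $(\psi^{1*},\psi^{2*})$ there then exist real numbers $\mu_1,\mu_2$ such that, for each $\ell\in\{1,2\}$,
\begin{equation*}
\frac{\partial E_{\varepsilon,\delta}^\Delta}{\partial P^\ell}(\psi^{1*},\psi^{2*}) = 2\delta_x^2 \mu_\ell\, P^{\ell*}, \qquad \frac{\partial E_{\varepsilon,\delta}^\Delta}{\partial Q^\ell}(\psi^{1*},\psi^{2*}) = 2\delta_x^2 \mu_\ell\, Q^{\ell*}.
\end{equation*}
I would then eliminate $\mu_\ell$ by forming the term-by-term product of the first identity with $P^{\ell*}$ and of the second with $Q^{\ell*}$, summing over $(n,k)\in\{1,\ldots,N\}^2$, and adding the two resulting scalar identities:
\begin{equation*}
\sum_{n,k=1}^N \biggl(\frac{\partial E_{\varepsilon,\delta}^\Delta}{\partial P^\ell}\ast P^{\ell*} + \frac{\partial E_{\varepsilon,\delta}^\Delta}{\partial Q^\ell}\ast Q^{\ell*}\biggr)_{n,k} = 2\delta_x^2 \mu_\ell \bigl(\|P^{\ell*}\|^2+\|Q^{\ell*}\|^2\bigr) = 2\mu_\ell\|\psi^{\ell*}\|_\Delta^2.
\end{equation*}
Substituting the resulting value of $2\delta_x^2 \mu_\ell$ back into the two matrix identities above and rearranging yields exactly $K^\Delta_{\ell,P}=0$ and $K^\Delta_{\ell,Q}=0$.

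Conceptually, $(K^\Delta_{\ell,P},K^\Delta_{\ell,Q})$ is nothing but the orthogonal projection (for the standard Euclidean inner product $\langle\cdot,\cdot\rangle$ introduced above) of the $(P^\ell,Q^\ell)$-block of $\nabla E_{\varepsilon,\delta}^\Delta$ onto the tangent space at $(P^{\ell*},Q^{\ell*})$ of the sphere $\{\|\psi^\ell\|_\Delta^2=MN_\ell\}$, so the proposition is the familiar statement that this projected gradient vanishes at a constrained minimizer. There is no genuine analytical difficulty; the only step requiring care is the constraint qualification, which is why it matters that the two constraints act on disjoint groups of real variables and that neither $\|\psi^{\ell*}\|_\Delta$ vanishes.
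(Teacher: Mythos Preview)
Your proof is correct and arrives at the same conclusion, but the route differs from the paper's. The paper does not invoke Lagrange multipliers explicitly; instead, for arbitrary test directions $(\psi^1,\psi^2)$ it parametrizes a curve on the constraint manifold by normalization,
\[
f^\Delta(t)=E_{\varepsilon,\delta}^\Delta\!\left(\frac{\psi^{1*}+t\psi^1}{\|\psi^{1*}+t\psi^1\|_\Delta}\sqrt{N_1M},\ \frac{\psi^{2*}+t\psi^2}{\|\psi^{2*}+t\psi^2\|_\Delta}\sqrt{N_2M}\right),
\]
differentiates at $t=0$ via the chain rule, and identifies $(f^\Delta)'(0)$ as the linear form whose coefficients are precisely the $K^\Delta_{\ell,P},K^\Delta_{\ell,Q}$. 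Since $(f^\Delta)'(0)=0$ for every direction, those matrices vanish.

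Your Lagrange-multiplier argument is the standard abstract formulation of the same fact, and it has the merit of making the constraint qualification explicit (the paper's parametrization implicitly relies on $\|\psi^{\ell*}\|_\Delta\neq 0$ to define $a_\ell$, but this is not commented on). Conversely, the paper's curve-based computation is slightly more self-contained in that it does not appeal to the Lagrange theorem as a black box and produces the expression \eqref{eq:fprime02} directly, which is exactly what is later used as the stopping criterion. Both approaches are short and elementary; neither offers a real advantage over the other here.
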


\begin{proof}
  Assume that $(\psi^{1*}, \psi^{2*}) \in \mathbb{C}^{(N+2)^2}$ are minimizers of
  $E_{\varepsilon,\delta}^\Delta$ under the constraints \eqref{eq:contraintesdiscretes}
  satisfying the homogeneous Dirichlet boundary conditions.
  For all $\psi^1, \psi^2 \in \mathbb{C}^{(N+2)^2}$ satisfying
  the homogeneous Dirichlet boundary conditions too and for
  all $t\in \mathbb{R}$ small enough, we define
\begin{equation}
\label{eq:fonction_f}
    f^\Delta (t)=E_{\varepsilon,\delta}^\Delta \left(\frac{\psi^{1*}+t\psi^1}{\|\psi^{1*}+t\psi ^1\|_\Delta}\sqrt{N_1M},\frac{\psi^{2*}+t\psi^2}{\|\psi^{2*}+t\psi^2\|_\Delta}\sqrt{N_2M}\right).
\end{equation}
The fact that $(\psi^{1*}, \psi^{2*})$ are minimizers of $E_{\varepsilon,\delta}^\Delta$
under the constraints \eqref{eq:contraintesdiscretes} implies that
\begin{equation}
\label{eq:laderiveenulle}
(f^\Delta)'(0)=0,
\end{equation}
for all $(\psi^{1}, \psi^{2})$.
In order to rewrite \eqref{eq:laderiveenulle} as a criterion depending on $(\psi^{1*}, \psi^{2*})$,
we set $\psi^{\ell*}=P^{\ell*}+iQ^{\ell*}$ and $\psi^{\ell}=P^{\ell}+iQ^{\ell}$ with $\ell=1, 2$
using the definitions from Section \ref{subsec:discrNRJ}.
Let us denote for all $\ell=1,2$ and small $t\in\R$:
\begin{equation*}
  g^\Delta_{\ell,P}(t)=\frac{P^{\ell*}+t P^\ell}{\|\psi^{\ell*}+t\psi^\ell\|_\Delta}\sqrt{N_\ell M},
  \qquad\text{and}\qquad
  g^\Delta_{\ell,Q}(t)=\frac{Q^{\ell*}+t Q^\ell}{\|\psi^{\ell*}+t\psi^\ell\|_\Delta}\sqrt{N_\ell M},
\end{equation*}
so that, using \eqref{eq:contraintesdiscretes}, we have
\begin{equation*}
  \left\|g^\Delta_{\ell,P}(t)+i g^\Delta_{\ell,Q}(t)\right\|_\Delta^2=N_\ell M.
\end{equation*}
Introducing the notation,
\begin{equation*}
  a_\ell=\frac{\delta_x^2}{\|\psi^{\ell*}\|_\Delta^2}\left(\sum_{n,k=1}^{N}(p_{n,k}^{l*}p_{n,k}^{l}+q_{n,k}^{l*}q_{n,k}^{l})\right)=\frac{\delta_x^2}{\|\psi^{\ell*}\|_\Delta^2}
  \left(
    \langle P^*_\ell , P_\ell\rangle + \langle Q^*_\ell , Q_\ell\rangle
  \right),
\end{equation*}
we infer
\begin{equation*}
  (g^\Delta_{\ell,P})'(0)=P^\ell-a_\ell P^{\ell*},
  \qquad\text{and}\qquad
  (g^\Delta_{\ell,Q})'(0)=Q^\ell-a_\ell Q^{\ell*}.
\end{equation*}
Using the definition \eqref{eq:fonction_f} of $f^\Delta$ and the chain rule,
we obtain
\begin{eqnarray*}
  \lefteqn{(f^\Delta)'(0)}\\
  & = & \displaystyle
    \sum_{\ell=1}^2
    \left(
    \left\langle
    \frac{\partial E^\Delta_{\varepsilon,\delta}}{\partial P^\ell} (\psi^{1*},\psi^{2*}) , (g^\Delta_{\ell,P})'(0)
    \right\rangle
    +
    \left\langle
    \frac{\partial E^\Delta_{\varepsilon,\delta}}{\partial Q^\ell} (\psi^{1*},\psi^{2*}) , (g^\Delta_{\ell,Q})'(0)
    \right\rangle
    \right)
  \\[2mm]
  & = & \displaystyle
    \sum_{\ell=1}^2 \sum_{n,k=1}^{N}\bigg(\frac{\partial E_{\varepsilon,\delta}^\Delta}{\partial P^{\ell}}(\psi^{1*},\psi^{2*})\ast\big(P^\ell-a_\ell P^{\ell*}\big)+\frac{\partial E_{\varepsilon,\delta}^\Delta}{\partial Q^{\ell}}(\psi^{1*},\psi^{2*})\ast\big(Q^\ell-a_\ell Q^{\ell*}\big) \bigg)_{n,k},
 \end{eqnarray*}
 As expected, $(f^\Delta)'(0)$ is an $\R$-linear form depending on $(P^1,Q^1,P^2,Q^2)$.
 The expression above implies that
\begin{equation}
\label{eq:fprime02}
(f^\Delta)'(0)= \sum_{n,k=1}^{N}\left(K^\Delta_{1,P} \ast P^1 +  K^\Delta_{1,Q} \ast Q^1+ K^\Delta_{2,P} \ast P^2 +  K^\Delta_{2,Q} \ast Q^2\right)_{n,k},
\end{equation} 
with $K_{\ell,P}^\Delta$ and $K_{\ell,Q}^\Delta$ the matrices
defined in the statement of Proposition \ref{prop:CNmin}.
This concludes the proof.

\end{proof}

Therefore, in the method described in Section \ref{subsec:methodegradient}, we will use the fact that
\begin{equation}
\label{eq:defKdelta}
K^\Delta=\sum_{\ell=1}^2\|K_{\ell,P}^\Delta\|_\Delta+\sum_{\ell=1}^2\|K_{\ell,Q}^\Delta\|_\Delta,
\end{equation}
is below some threshold as a criterion for numerical convergence.

\subsection{Gradient Method}
\label{subsec:methodegradient}

In order to minimize the discretized energy defined in \eqref{energie_discr}
under the constraints \eqref{eq:contraintesdiscretes},
we propose the following gradient method with projection and adaptive step.
\begin{itemize}
    \item Initialization:
    \begin{itemize}
        \item Choose $\psi^{1,1}, \psi^{2,1} \in \mathbb{C}^{(N+2)^2},$ normalized as: 
        $\|\psi^{\ell,1}\|_\Delta=\sqrt{N_\ell M},~ (\ell=1, 2)$ and satisfying the homogeneous Dirichlet boundary conditions,
        \item choose a step $h>0$,
        \item choose two tolerance parameters $h_0, K_0 >0$ for the convergence test.
    \end{itemize}
    \item Iteration:
    \begin{enumerate}
         \item\label{item:step1} Compute the gradient: $\nabla E_{\varepsilon,\delta}^\Delta\left(\psi^{1,m},\psi^{2,m}\right)$.
         \item\label{item:step2} Compute the auxiliary step	$\begin{pmatrix}
                            \tilde{\psi}^{1,m} \\
                            \tilde{\psi}^{2,m}
                            \end{pmatrix}=\begin{pmatrix}
                            \psi^{1,m} \\
                            \psi^{2,m}
                            \end{pmatrix}-h\nabla E^\Delta_{\varepsilon,\delta}(\psi^{1,m},\psi^{2,m})$,
        	and set the homogeneous Dirichlet boundary conditions on $\tilde{\psi}^{1,m}$ and $\tilde{\psi}^{2,m}$.
         \item Normalize the auxiliary step to obtain an attempt for the next step: 
         $$\psi^{\ell,{m+1}}=\frac{\tilde{\psi}^{\ell,m}}{\|\tilde{\psi}^{\ell,m}\|_\Delta}\sqrt{N_\ell M},\qquad \ell=1, 2.$$
       \item If $E^\Delta_{\varepsilon,\delta}(\psi^{1,{m+1}},\psi^{2,{m+1}})> E^\Delta_{\varepsilon,\delta}(\psi^{1,m},\psi^{2,m})$, then we replace $h$ by $h/2$ (provided $h/2 \geq h_0$, otherwise we stop without convergence) and we compute a new auxiliary step $(\tilde{\psi}^{1,m}, \tilde{\psi}^{2,m})$ for the same $m$ by going back to step \ref{item:step2}.
         Otherwise, we compute $K^\Delta$ at point $(\psi^{1,m+1}, \psi^{2,m+1})$
         using \eqref{eq:defKdelta}.
         If $K^\Delta \geq K_0$, then we replace $m$ by $m+1$ and we restart at step \ref{item:step1}.
         Otherwise, we take $(\psi^{1,m+1}, \psi^{2,m+1})$
         as an approximate minimizer and we stop with convergence.
    \end{enumerate}    
\end{itemize}

\section{Post-processing algorithms}\label{sec:post_proc}
\subsection{Indices of the Vortices}
\label{subsec:index}
In this section, we introduce an algorithm that we developed for the computation
of the indices of the vortices in the minimizers of the discrete energy functional defined in
\eqref{energie_discr} under the constraints \eqref{eq:contraintesdiscretes}, obtained using
the minimization method described in Section \ref{sec:discretmin}.
This algorithm is used in Section \ref{sec:numericalresults}.

The algorithm relies on 4 numerical parameters $tol_1>0$, $tol_2>0$, $N_{min}\in \mathbb{N}^*$ and $N_{max}\in\mathbb{N}^*$ with $N_{min}\leq N_{max}$. It follows the 4 steps below. The three first steps identify the vortices' centers and the last step computes the vortices' indices. In this section, $\psi$ denotes the square complex matrix with $N^2$ entries for either of the two components of the Bose--Einstein condensate. 

\underline{Step 1}: We determine the potential centers of the vortices and establish a list of candidates as the set of $(n,k)\in \{1,\hdots,N\}^2$ such that $|\psi_{n,k}|^2<tol_1$.

\underline{Step 2}: We build a second list $\mathbb{P}$ based on the first list above using the following rule. For each potential center $(n,k)$ in the list established in Step 1, we consider the values of $|\psi|^2$ on the squares 
\begin{equation*}
    \mathbb{S}_\lambda(n,k)=\bigg\{(n\pm\lambda,k+j) ~\big|~ j\in\{-\lambda,\hdots,\lambda \}  \bigg\}\bigcup\bigg\{(n+j,k\pm\lambda) ~\big|~ j\in\{-\lambda,\hdots,\lambda \}\bigg\},
\end{equation*}
of length $2\lambda\delta_x$, for $\lambda \in \{N_{min},\hdots,N_{max}\}$. If for one of these $\lambda=\lambda(n,k)$, the values of $|\psi|^2$ at all points of the square $\mathbb{S}_\lambda(n,k)$ are such that $|\psi|^2-|\psi_{n,k}|^2>tol_2$, then we add the center $(n,k)$ to the second list $\mathbb{P}$, and we set $\lambda(n,k)$ as the caracteristic length of the potential vortex.
In other words, we have determined a list $\mathbb{P}$ of couple of points $(n,k)$ satisfying the following conditions:
\begin{itemize}
    \item $|\psi_{n,k}|^2<tol_1$,
    \item $|\psi_{i,j}|^2>|\psi_{n,k}|^2+tol_2$, for all the couples $(i,j)$ in $\mathbb{S}_{\lambda(n,k)}(n,k)$. 
\end{itemize} 

\underline{Step 3}: We consider each center $(n,k)$ from the list $\mathbb{P}$ and we identify if another center is inside the square $\bigcup_{1 \leq \lambda \leq \lambda(n,k)} \mathbb{S}_\lambda(n,k).$
If this is the case, we eliminate the center with the biggest $|\psi|^2$ at the center from the list. We repeat this step until we are left with isolated centers.
Let us denote by $\mathbb{T}$ the list of all the couples $(n,k)$ corresponding to isolated centers.

\underline{Step 4}: We compute the indices using the following rules. For each $(n,k)\in \mathbb{T}$, we start off with any couple $(i,j)$ in $\mathbb{S}_{\lambda(n,k)}(n,k)$. Then we compute their associated angle $\theta_0=\arg(\psi_{i,j})$. Note that there are $8\lambda(n,k)$ couples in $\mathbb{S}_{\lambda(n,k)}(n,k)$ (see Figure \ref{fig:S_k}). After computing the first angle $\theta_0$, we proceed to compute the other angles $\theta_1, \theta_2, \cdots ,\theta_{8\lambda(n,k)}$ in the following way:
    \begin{itemize}
        \item After computing the angle $\theta_m$, the next angle $\tilde{\theta}_{m+1}$ is computed as an argument of the next value of $\psi$ on the square $\mathbb{S}_{\lambda(n,k)}(n,k)$ with anticlockwise orientation (see figure \ref{fig:S_k}).
        \item We set $\theta_{m+1}:=\tilde{\theta}_{m+1}+2k\pi$ with $\dis k={\rm argmin}_{l\in\mathbb{Z}} |\tilde{\theta}_{m+1}-\theta_m+2\pi l|$.
    \end{itemize}
Eventually, index of the vortex $(n,k)$ is equal to $(\theta_{8\lambda(n,k)}-\theta_0)/2\pi$.
\begin{figure}[ht]
    \centering
    \includegraphics[width=0.5\textwidth]{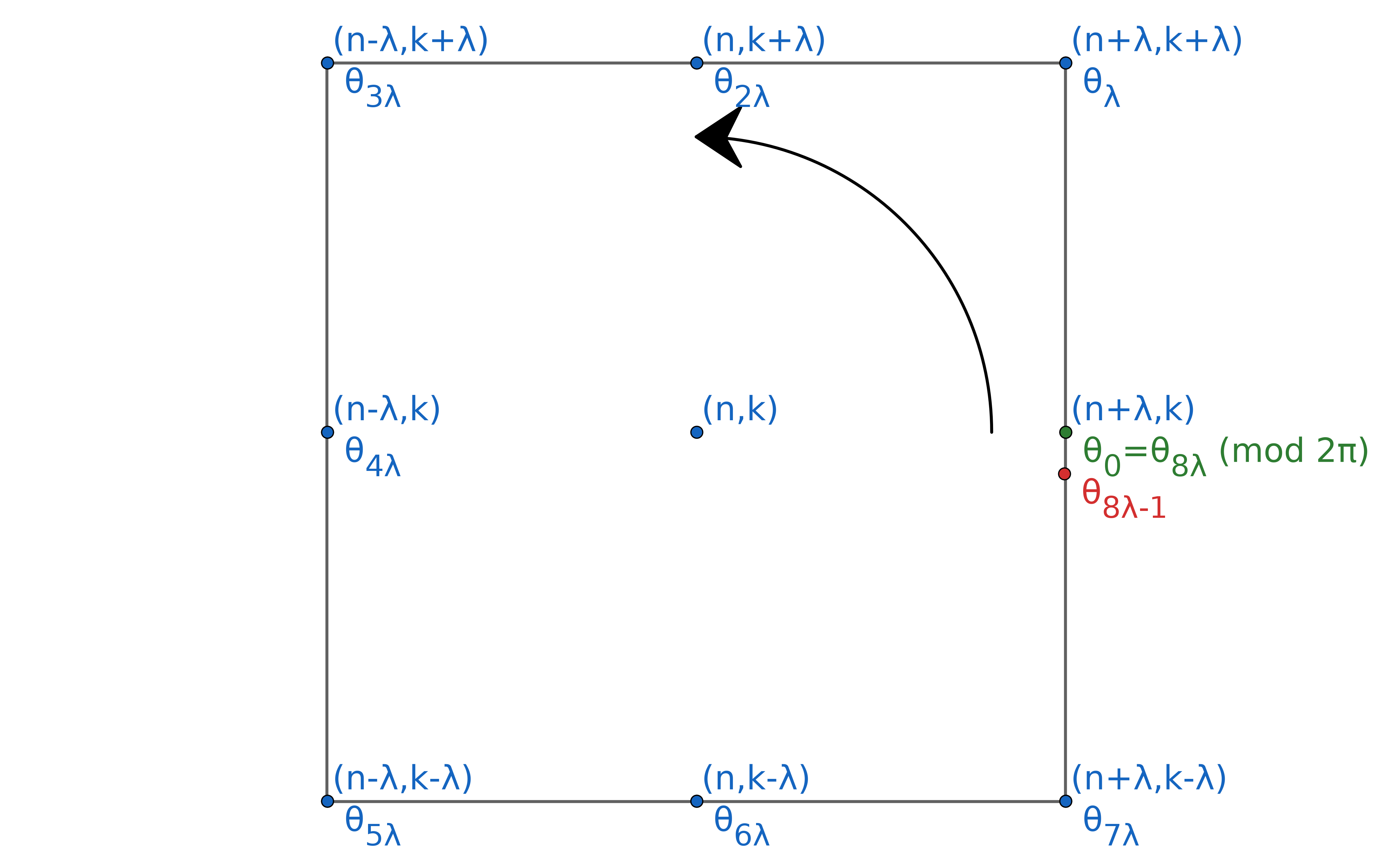}
    \caption{$\mathbb{S}_\lambda(n,k)$ with angles $\theta_0$ to $\theta_{8\lambda}$.}
    \label{fig:S_k}
\end{figure}
\subsection{Indices of the vortex sheets}\label{subsec:sheet_index}
In this section, we introduce an algorithm that we developed
for the computation of the index along a vortex sheet of a minimizer
of the discrete energy functional defined in \eqref{energie_discr} under the constraints
\eqref{eq:contraintesdiscretes} using the minimization method described
in Section \ref{sec:discretmin}.
Let us note that this algorithm {\it requires} some input from a human being at some point.
This algorithm is used in Section \ref{sec:numericalresults}.

The algorithm relies on 5 main steps.
It uses 3 numerical values: $m \leq M$ (close to $1/2$) and a small $tol_3>0$.
The first step consists in identifying the contours of the vortex sheets.
The second step splits these contours into discrete connex curves.
The third step requires a human being to decide whether each contour should be discarded or
merged with another one.
The fourth step sorts the optimized contours, and provides them with an orientation.
The fifth and last step computes each vortex sheet's index alongside its contour.

\begin{figure}[ht]
	\centering
	\begin{subfigure}[t]{0.45\textwidth}
		\centering
		\includegraphics[width=0.8\textwidth]{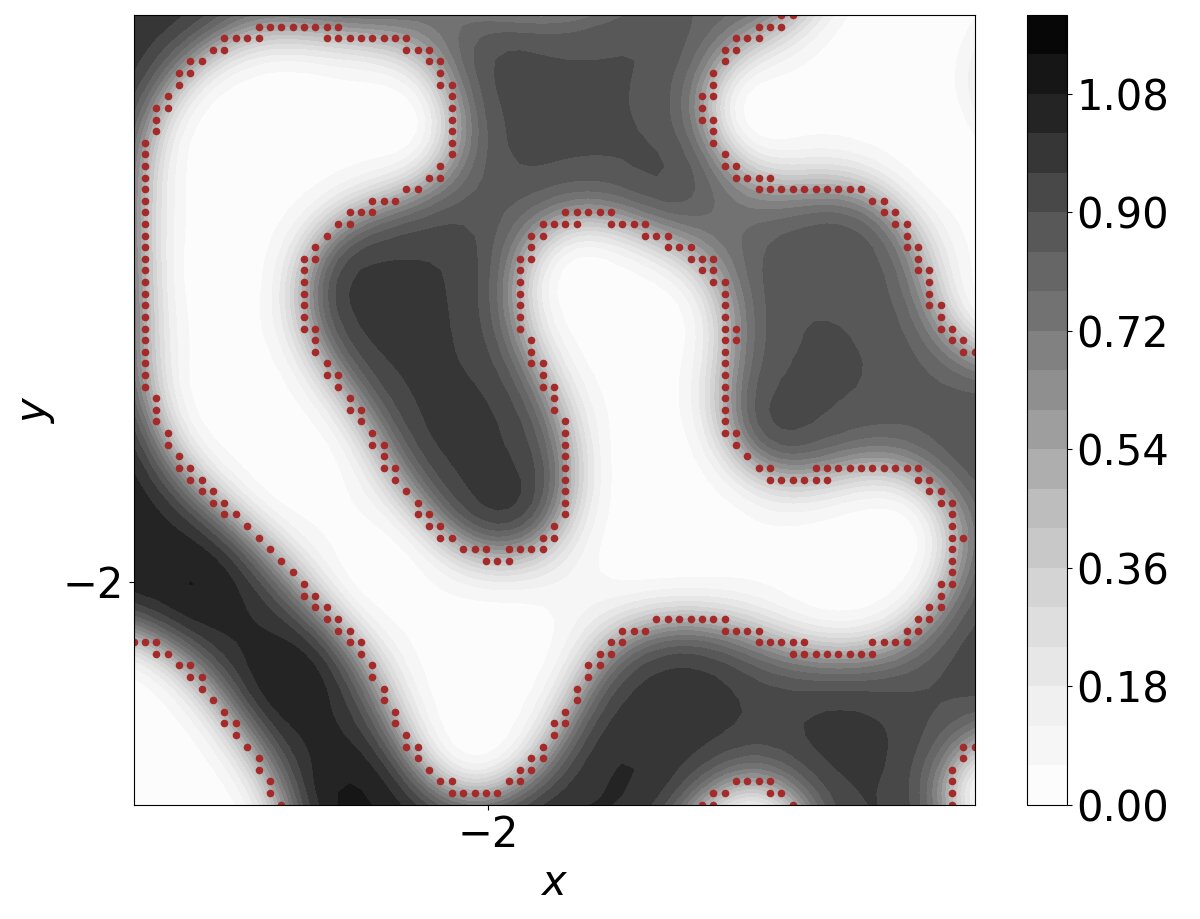}
		\subcaption{An example of the contours detected after the first part of the first step
                  of the contour detection algorithm.}
                \label{subfig:etape1a}
	\end{subfigure}
	\hfill
	\begin{subfigure}[t]{0.45\textwidth}
		\centering
		\includegraphics[width=0.8\textwidth]{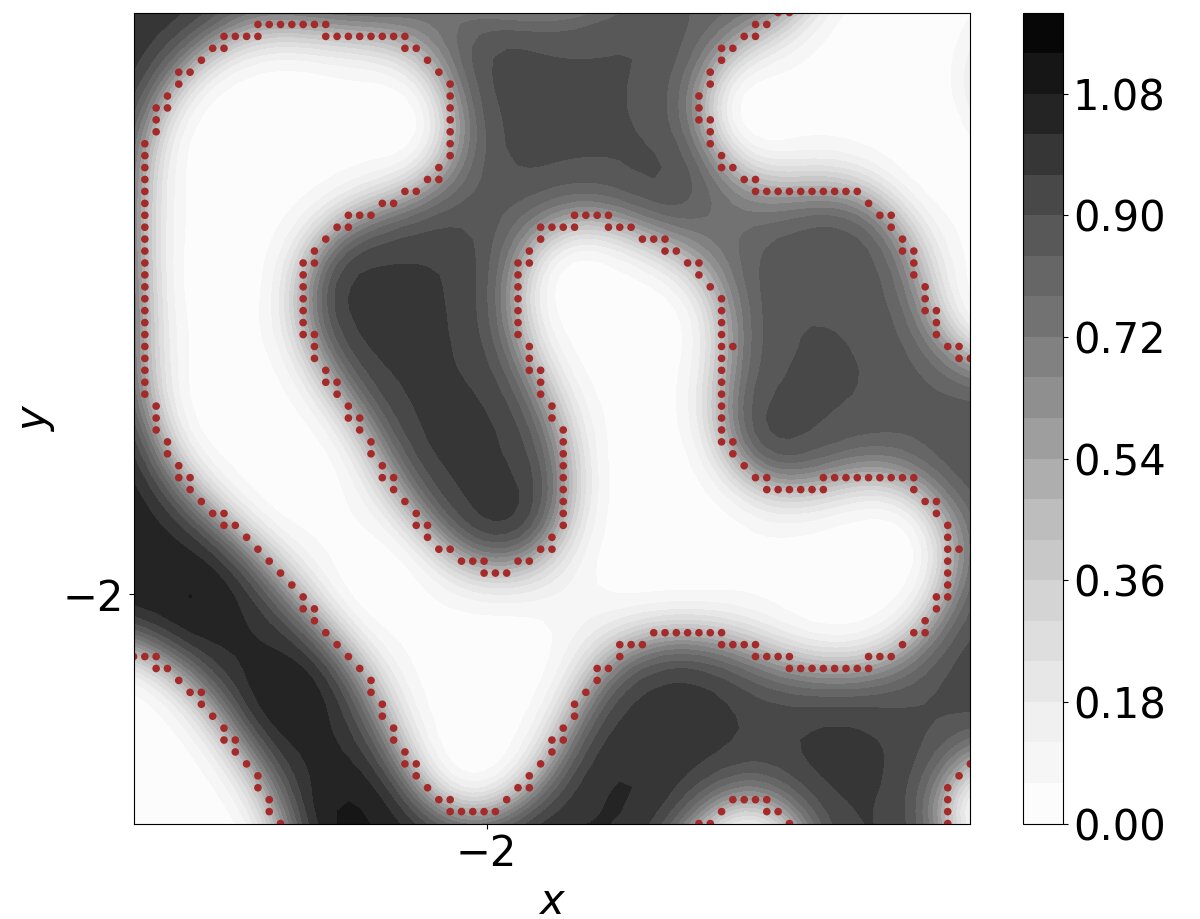}
		\subcaption{An example of the {\it connex} curves obtained at the end of the first
                  step of the contour detection algorithm.}
                \label{subfig:etape1b}
	\end{subfigure}
	\caption{An example of the contours detected after the first step of the contour
          detection algorithm ($m=0.4$, $M=0.6$, and $tol_3=0.3$).
          We display just a part of the minimizer obtained in Fig \ref{fig:2_espece_om_15_seg_1}.}
      \end{figure}
      
\underline{Step 1}: The contour of a vortex sheet in either of the components of the condensate
consists in a region where the squared modulus of the minimizer moves fast from $0$
to $1=\max \rho$ 
(or the other way around).
The first step consists in finding regions on the discrete square $[-L,L]^2$ where this occurs.
We identify, for each component $\psi^\ell$, the coordinates on the grid for the values $|\psi^\ell|^2$
between $m$ and $M$.
Next, we add the grid coordinates close to the circle $\partial D$,
of which we retain only the coordinates where $|\psi^\ell|^2\leq tol_3$.
These grid points constitute the union of potential contours of vortex sheets
(see Figure \ref{subfig:etape1a} for an example).
Let us sort these points, to ease the search for connex components in contours in the next step.
Let us note $\mathbb{K}$ the set of coordinates we have found so far.
We are looking for a union of {\it connex} curves that describe the borders of each vortex sheet.
Each curve can be determined with as many points as we want,
but we want to avoid taking too many points per curve. Therefore,
we limit the number of neighbours to any point on the curve to $3$.
For each grid point of coordinates $(i,j)$, we denote by $\mathbb{S}_1(i,j)$ the set of its
neighbours.
For all grid point $(i,j) \in \mathbb{K}$, we act as follows:
\begin{enumerate}
	\item If $(i,j)$ has no neighbours, we just remove it from the set $\mathbb{K}$.
	\item If $(i,j)$ has 1 neighbour, we add to the set $\mathbb{K}$ a couple $(i',j')\notin \mathbb{K}$ verifying $$(i',j')={\rm argmin}_{(n,k)\in \mathbb{S}_1(i,j)}\big||\psi_{i,j}|^2-|\psi_{n,k}|^2\big|.$$
        \item If $(i,j)$ has 2 or 3 neighbours, we do nothing.
	\item If $(i,j)$ has at least 4 neighbours, we remove the couple $(i',j')$ from $\mathbb{K}$ verifying: $$(i',j')={\rm argmax}_{(n,k)\in \mathbb{S}_1(i,j)\cap \mathbb{K}}\big||\psi_{i,j}|^2-|\psi_{n,k}|^2\big|,$$ and we repeat this step until we are left with only 3 neighbours.
\end{enumerate}        
At this point, we have detected a union of different curves defining the borders of the vortex sheets.
An example if displayed in Fig \ref{subfig:etape1b}.
Our next step is to separate the connex components in $\mathbb K$,
to categorize the different sheets (since each minimizer can have more than one vortex sheet).
\begin{figure}[ht]
	\centering
	\begin{subfigure}[t]{0.45\textwidth}
		\centering
		\includegraphics[width=0.8\textwidth]{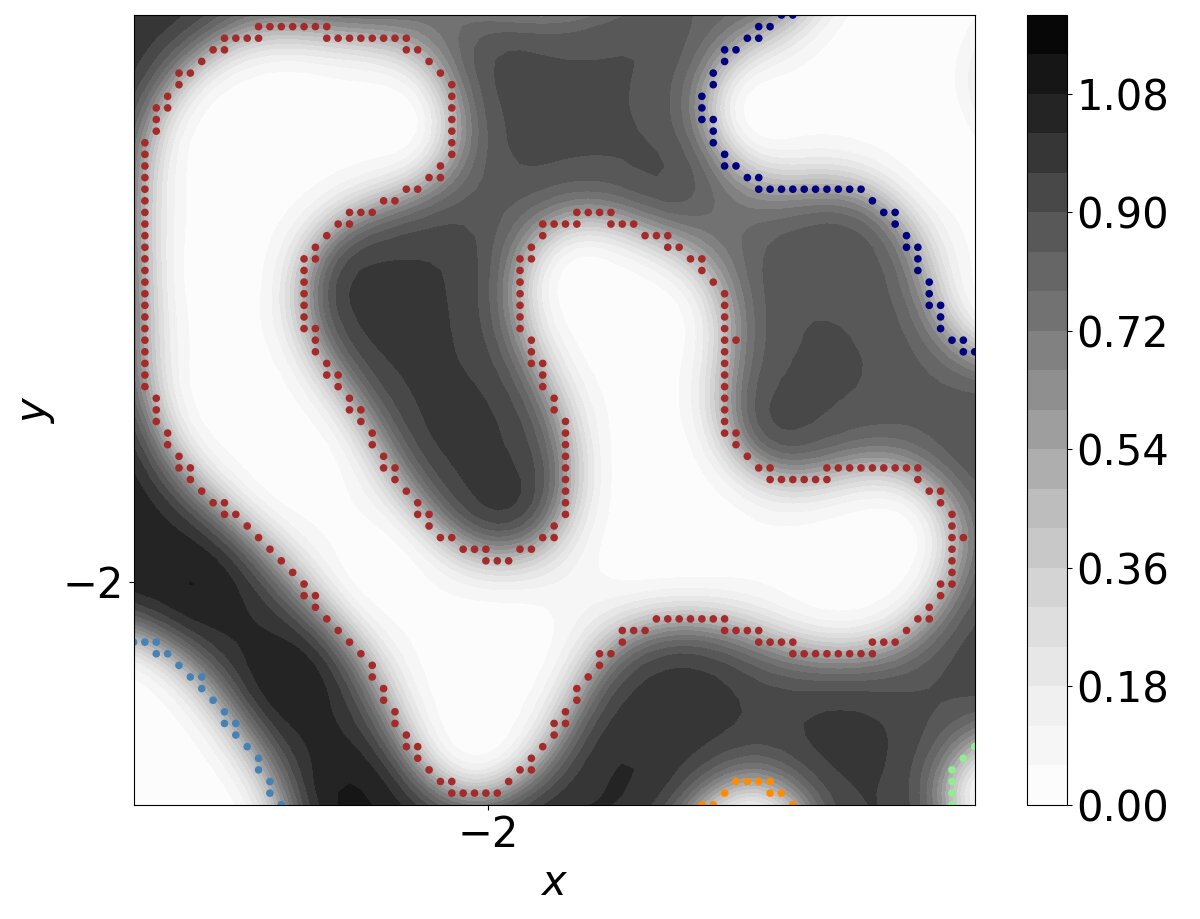}
		\subcaption{After Step 3: We can use a different colour of each contour
                  (the colour changes with $p$).}
                \label{subfig:etape2}
	\end{subfigure}
	\hfill
	\begin{subfigure}[t]{0.45\textwidth}
		\centering
		\includegraphics[width=0.8\textwidth]{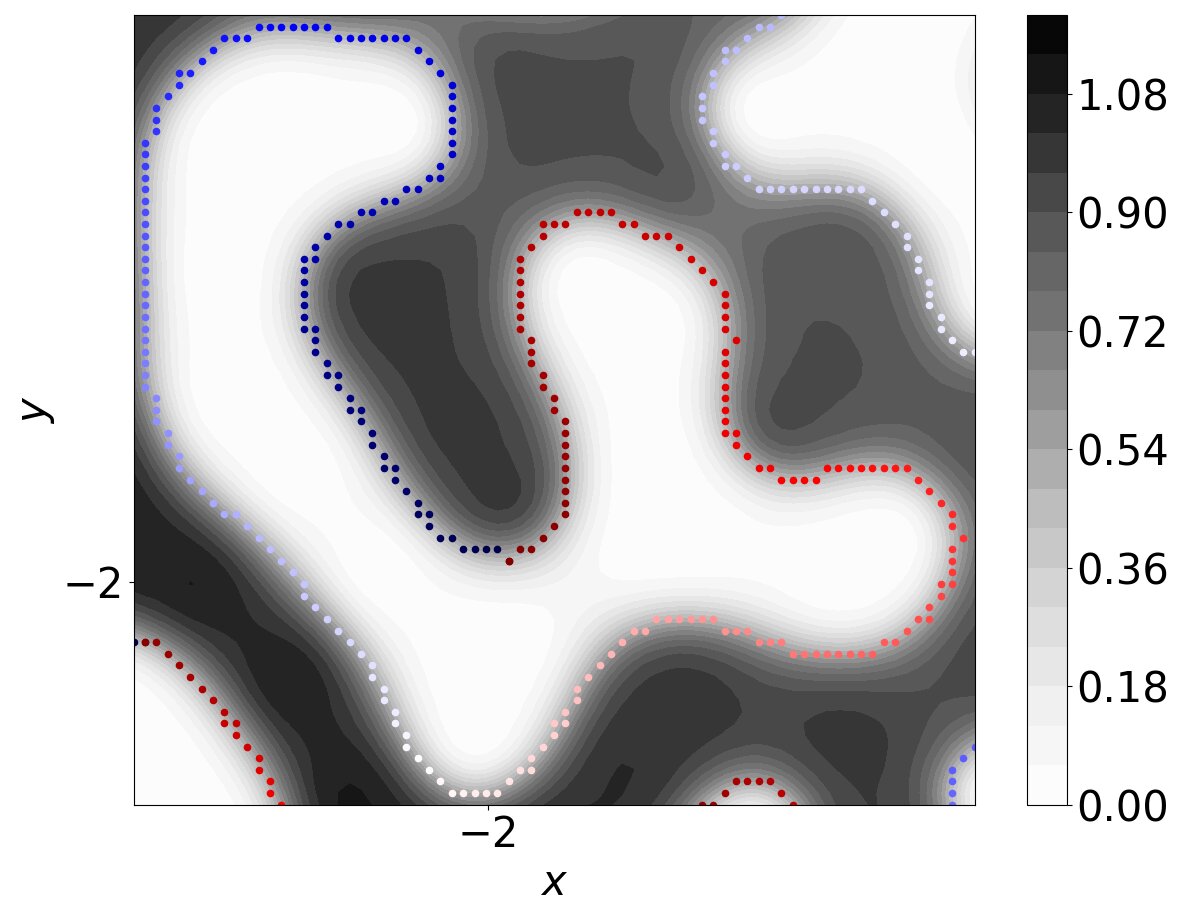}
		\subcaption{After Step 4: We can use color shading to show the
                  anticlockwise orientation of each contour.}
                \label{subfig:etape3}
	\end{subfigure}
	\caption{An example of the contours detected and orientated by the contour detection
          algorithm after the third and fourth steps.}
 \end{figure}

 \underline{Step 2}: We partition $\mathbb K$ into the union of a finite number
 of contours $\mathbb K_p$ corresponding to {\it connex} curves.
 We start with $p=1$. At the $p^{\rm th}$ vortex sheet, we create dynamically a new list
 $\mathbb{K}_p=\emptyset$.
We start by adding to $\mathbb K_p$ a grid point at random from the list $\mathbb{K}$
and we remove it from $\mathbb{K}$.
Then, we add all of its neighbours one by one to $\mathbb K_p$ and we remove them from $\mathbb{K}$.
We repeat this until we are left with no neighbours in $\mathbb{K}$
to all the grid points in $\mathbb{K}_p$.
Then, as long as there are points in $\mathbb K$,
we increase $p$ to $p+1$ and restart this step again until $\mathbb K$ is empty.

\underline{Step 3}: After dividing the contours of the vortex sheets into discrete connex curves
$\mathbb K_p$, we allow for human intervention.
First, a human being decides if each $\mathbb K_p$ should be considered or ignored.
Second, a human being decides whether couples of two discrete connex curves
$\mathbb{K}_p$ and $\mathbb{K}_{p'}$ should be merged or not.
To merge two discrete connex curves $\mathbb{K}_p$ and $\mathbb{K}_{p'}$,
we follow these steps.
First, we search for the closest two couples $(n,k)$ and $(n',k')$ from $\mathbb{K}_q$
and $\mathbb{K}_{q'}$ (using the usual distance in $\R^2$) and we compute their middle
$(i',j')=(\lfloor \frac{n+n'}{2} \rfloor,\lfloor \frac{k+k'}{2} \rfloor)$.
If this middle has neighbours in both categories,
we merge them into a new set $\mathbb{K}_q \cup \mathbb{K}_{q'} \cup (i',j')$.
Otherwise, we repeat {\it once} the same process between $\mathbb{K}_q \cup\{(i',j')\}$
and $\mathbb{K}_{q'} \cup\{(i',j')\}$.
In the end, we are left with different categories for different vortex sheet's contours.
An example of what we obtain after Step 3 is displayed in Fig \ref{subfig:etape2}.

\underline{Step 4}: We build, from each set $\mathbb K_p$ a {\it list} of grid points corresponding
to an anticlockwise path along the countour of the $p^{\rm th}$ vortex sheet.
We proceed in the following way:
\begin{enumerate}
\item For each list $\mathbb{K}_p$, we compute it's barycenter.
  We choose our starting couple from $\mathbb{K}_p$ as one with a close $x$-axis coordinate
  to that of the barycenter and with the biggest $y$-axis coordinate.
  We add this to our new list $\mathbb{K}^{\rm sort}_p$.
\item While the last point and the first point of $\mathbb{K}_p^{sort}$ are not neighbours:
	\begin{enumerate}
        	\item Out of all the neighbours in $\mathbb{K}_p$ to the last grid point we added
        	to $\mathbb{K}_p^{\rm sort}$, we choose one and prioritize the anticlockwise direction.
        	Then, we remove this grid point from $\mathbb{K}_p$.
        	\item If we don't find any neighbour to the last grid point we added do
        	$\mathbb K_p^{\rm sort}$, we delete this grid point from $\mathbb{K}_p^{\rm sort}$
        	and from $\mathbb{K}_p$. Next, we return to the last grid point
        	we added to $\mathbb K_p$ before that and we repeat the previous step. 
	\end{enumerate}
      \item If the length of $\mathbb{K}_p^{\rm sort}$ is less than 70\% of the original size
        of $\mathbb{K}_p$, then we delete the last grid point added from $\mathbb{K}_p^{\rm sort}$
        and from $\mathbb{K}_p$.
        Next, we return to the (new) last grid point we added to $\mathbb K_p^{\rm sort}$
        before that and we repeat the second point from Step 4.
        If the length of $\mathbb{K}_p^{\rm sort}$ exceeds 70\% of the original size of $\mathbb{K}_p$,
        then we stop the algorithm.
      \end{enumerate}
      An example of what we obtain after Step 4 is displayed in Fig \ref{subfig:etape3}.
      
  \begin{remark}
    In the third point of step 4, we choose as criterion $70\%$ arbitrarily.
    Other choices can be made as long as it is strictly bigger than $50\%$.
    If we choose a too high value, then the algorithm could be so constrained
    that it might not give a proper result.    
  \end{remark}
      
\underline{Step 5}: We compute the indices of each vortex sheet contour detected
$\mathbb{K}_p^{\rm sort}$.
The algorithm is similar to the last step in Section \ref{subsec:index} with the list
$\mathbb{K}_p^{\rm sort}$ instead of $\mathbb{S}_{\lambda(n,k)}(n,k)$.

\begin{remark}[Computation of the index of a giant hole]\label{rem:hole}
  For an estimation of the index of a giant hole, we first estimate its radius numerically.
  The algorithm goes as follows. We are given integers $l_0\geq 1$ and $m_0>> 1$ and
  a tolerance parameter $tol_4>0$. We choose $tol_4=0.1$ in all the numerical experiments.
  For $l\in\{0,\cdots,l_0\}$, we choose $r_l>0$ a possible radius for the giant hole in such
  a way that $l\mapsto r_l$ is increasing, $r_0$ is too small a radius and $r_{l_0}$ is
  too big a radius.
  For $m\in\{0,\cdots,m_0\}$, we set $\theta_m=\frac{2\pi m}{m_0}$.
  Let $\psi$ be a minimizer with a giant hole.
  Starting from $l=0$ and as long as $l\leq l_0$, we compute successively
  \begin{equation*}
          \beta_l=\bigcup_{m=0}^{m_0-1}
          \left(
            \left(
              \underset{n\in\{1,\cdots,N\}}{{\rm argmin}} |x_n-r_l \cos(\theta_m)|
            \right)
            \times
            \left(
              \underset{k\in\{1,\cdots,N\}}{{\rm argmin}} |y_k-r_l \cos(\theta_m)|
            \right)
          \right).
   \end{equation*}
   If $\underset{{(n,k) \in \beta_l }}{\max} |\psi_{n,k}|^2>tol_4$,
   then we stop the algorithm and we will use $r_l$ and $\beta_l$ for the computation of the index.
   Otherwise we repeat this step with $l=l+1$.
   Finally, to compute the index of the giant hole, we use the last step of Section
   \ref{subsec:index} with $\beta_l$ instead of $\mathbb{S}_{\lambda(n,k)}(n,k)$.
  

  
        
\end{remark}

\section{Numerical results}
\label{sec:numericalresults}

This section is devoted to the numerical experiments carried out using the EPG minimization
method introduced in Section \ref{sec:discretmin}
and the post-processing algorithms described in Section \ref{sec:post_proc}
for the detection of structures and the computation of the indices.
In Section \ref{subsec:simulation_onecomponent}, we consider a one component Bose--Einstein
condensate in strong confinement and rotation, in connection with the theoretical
results of Section \ref{subsubsec:onecomponent}.
In Section \ref{subsec:simulation_twocomponent}, we consider a two components Bose--Einstein
condensate in a strong confinement regime in the segregation case, without rotation.
This allows to connect with the theoretical results of Section \ref{subsubsec:twocomponent}.
In Section \ref{subsec:twocompseg}, we consider a two components Bose--Einstein condensate
with strong confinement and rotation in the segregation case.
This allows to connect with the theoretical results of Section \ref{subsubsec:twocomponent}.
Last, in Section \ref{subsec:twocompcoex}, we consider a two components Bose--Einstein
condensate with strong confinement and rotation in the coexistence case.
This allows for connection with theoretical conjectures of Section \ref{subsubsec:twocomponentcoex}.

\subsection{One component condensate with rotation}
\label{subsec:simulation_onecomponent}

The goal of this section is to illustrate numerically the theoretical results
described in Section \ref{subsubsec:onecomponent}.
These results identify four different regimes for the behavior of the one component Bose--Einstein
condensate depending on how big $\Omega_\varepsilon$ is as $\varepsilon$ tends to zero.
These four regimes are separated by three characteristic rotational speeds
$\Omega_\varepsilon^i, ~i=1,2,3$ (see \eqref{eq:char_speed}).
We introduce below the parameters we use for all the one component simulations.
Then, we explain how we identify the four different regimes numerically.
We conclude with numerical simulations for small $\varepsilon$ in each of the four regimes. 

\subsubsection{Parameters used for the one component simulations}

For all the one component simulations, we consider the following parameters.
The confinement is defined by the function 
\begin{equation}
\label{eq:defrho}
\rho(x,y)=\min[1,10 (R^2-x^2-y^2)].
\end{equation}
For the initial datum we choose, $\psi^0(x,y)=\exp({-10x^2-10y^2})/5$.
The square of computation is of length $2L=14$ and the radius of the disc $D$ is $R=4$.
We initialize the step size to $h=0.1$ and we set $h_0=10^{-12}$.
For example, for $N=256$, we have $M=\|\sqrt{\rho_{>0}}\|_\Delta^2 \sim 50.084$.

\subsubsection{Identification of the four regimes}

In order to identify the four regimes, we proceed as follows.
First, we take $\varepsilon=10^{-1}$ and we use the EPG algorithm described
in Section \ref{subsec:methodegradient} to compute minimizers of $E_{\varepsilon,\delta}^\Delta$
for several rotational speeds.
Then, we do the same for $\varepsilon=5 \times 10^{-2}$.
Based on these many simulations, and the expressions \eqref{eq:char_speed},
we estimate the three critical values for the rotation speed which are shown in Figure
\ref{fig:estimationzones}.
This provides us with an estimation of the three critical values of the rotational
speed when $\varepsilon=10^{-2}$.
Choosing rotation speed between these 3 estimated critical values for $\varepsilon=10^{-2}$,
we manage to observe the four different regimes, as detailed below.


\begin{figure}[ht]
    \centering
    \includegraphics[width=0.5\textwidth]{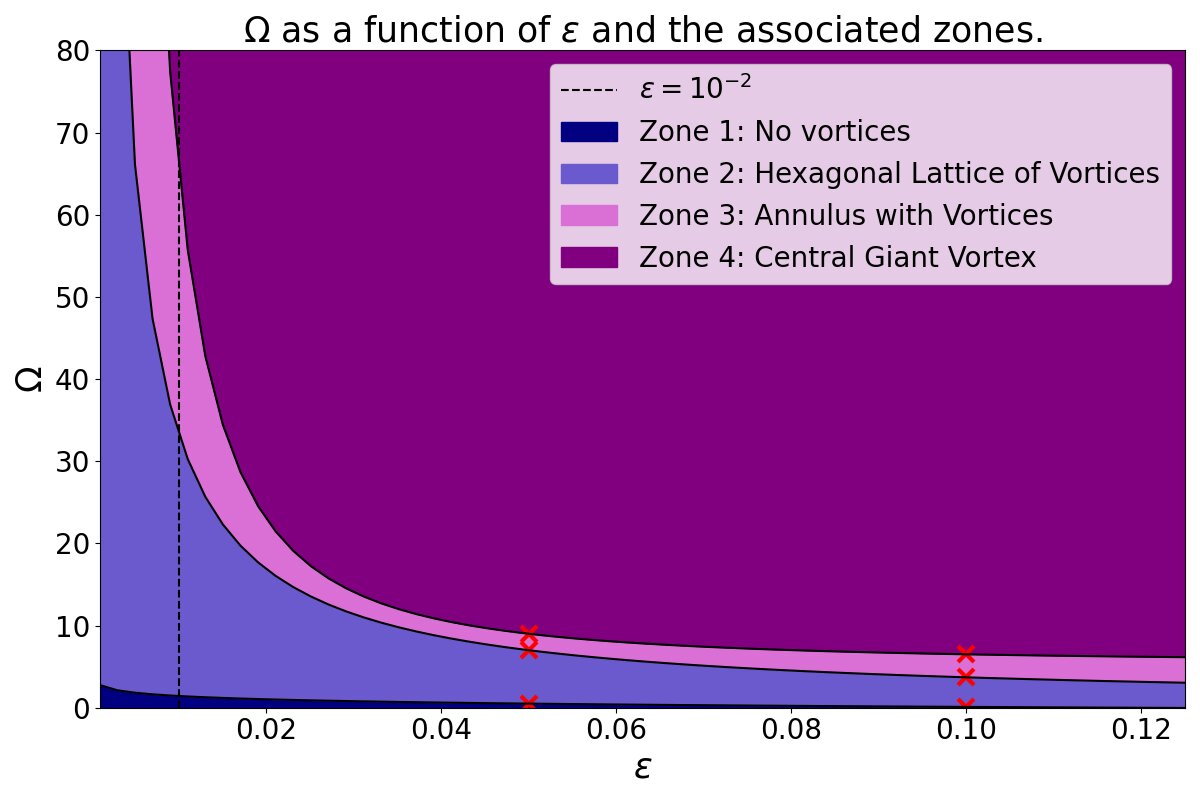}
    \caption{$\Omega$ as a function of $\varepsilon$ and the associated zones.}
    \label{fig:estimationzones}
\end{figure}

\subsubsection{Convergence test without rotation ($\Omega_\varepsilon=0$)}

For $N=128$ points, we perform a convergence analysis. For several values of $\varepsilon$ between $10^{-2}$ and $10^{0}$, we perform the gradient algorithm described in Section \ref{subsec:methodegradient} starting from $\psi^{1,0}=\sqrt{\rho_{>0}}$. We compute the residual $\||\psi^{1,m}|^2-{\rho_{>0}}\|_\Delta$ for the first index $m \geq 1$ for which $K^{\Delta}$ is beyond $10^{-0.0}$, $10^{-0.5}$, $10^{-1.0}$, $10^{-1.5}$, $10^{-2.0}$, $10^{-2.5}$, $10^{-3.0}$. Numerical results are displayed in Figure \ref{fig:convergence_test}. 

\begin{figure}[ht]
    \centering
\begin{tikzpicture}

\definecolor{color0}{rgb}{0.16078431372549,0.501960784313725,0.725490196078431}
\definecolor{color1}{rgb}{0.133333333333333,0.6,0.329411764705882}
\definecolor{color2}{rgb}{0.905882352941176,0.298039215686275,0.235294117647059}
\definecolor{color3}{rgb}{0.0901960784313725,0.125490196078431,0.164705882352941}
\definecolor{color4}{rgb}{0.490196078431373,0.235294117647059,0.596078431372549}
\definecolor{color5}{rgb}{0.952941176470588,0.611764705882353,0.0705882352941176}
\definecolor{color6}{rgb}{0.650980392156863,0.674509803921569,0.686274509803922}

\begin{axis}[
legend cell align={left},
legend style={
  fill opacity=0.8,
  draw opacity=1,
  text opacity=1,
  at={(0.99,0.01)},
  anchor=south east,
  draw=white!80!black,
  legend columns=2
},
tick align=outside,
tick pos=left,
x grid style={white!69.0196078431373!black},
xlabel={\(\displaystyle \log_{10}(\varepsilon)\)},
xmin=-2.1, xmax=0.1,
xtick style={color=black},
y grid style={white!69.0196078431373!black},
ylabel={\(\displaystyle \log_{10}\left(\| |\psi^{\ast}_\varepsilon|^2 - (\rho)_+ \|_\Delta\right)\)},
ymin=-1.56351521096202, ymax=1.67814430578154,
ytick style={color=black}
]
\addplot [semithick, color0, mark=asterisk, mark size=3, mark options={solid,fill opacity=0}, only marks]
table {%
0 1.53079614592956
-0.25 1.34711169256716
-0.5 1.13038120003539
-0.75 0.835355373677379
-1 0.339600171580581
-1.25 -0.00633861560336469
-1.5 -0.387003637161629
-1.75 -0.794932282467178
-2 -1.27568966035768
};
\addlegendentry{\tiny{$K^\Delta=10^0$}}
\addplot [semithick, color1, mark=o, mark size=3, mark options={solid,fill opacity=0}, only marks]
table {%
0 1.26083191898465
-0.25 1.04439728591878
-0.5 0.696077253875229
-0.75 0.325804609771731
-1 0.109972709606536
-1.25 -0.135909638957123
-1.5 -0.540306912341637
-1.75 -0.794281971936477
-2 -1.32161651578224
};
\addlegendentry{\tiny{$K^\Delta=10^{-0.5}$}}
\addplot [semithick, color2, mark=diamond, mark size=3, mark options={solid,fill opacity=0}, only marks]
table {%
0 0.928880628302727
-0.25 0.655608925078026
-0.5 0.408387662937559
-0.75 0.195498412451011
-1 0.062390095677471
-1.25 -0.209867135049738
-1.5 -0.533453070422638
-1.75 -0.842879257678568
-2 -1.37890801803138
};
\addlegendentry{\tiny{$K^\Delta=10^{-1.0}$}}
\addplot [semithick, color3, mark=+, mark size=3, mark options={solid}, only marks]
table {%
0 0.678680650299638
-0.25 0.501483494817985
-0.5 0.32623275095132
-0.75 0.15439391630627
-1 -0.0391611025850008
-1.25 -0.252612930389231
-1.5 -0.57073012121486
-1.75 -0.936114966848831
-2 -1.41616705111004
};
\addlegendentry{\tiny{$K^\Delta=10^{-1.5}$}}
\addplot [semithick, color4, mark=triangle, mark size=3, mark options={solid,fill opacity=0}, only marks]
table {%
0 0.591004509852432
-0.25 0.450779736161526
-0.5 0.300131164744156
-0.75 0.141428927549172
-1 -0.0392219344117746
-1.25 -0.277600048281327
-1.5 -0.571844242434146
-1.75 -0.95776926286345
-2 -1.41598828487146
};
\addlegendentry{\tiny{$K^\Delta=10^{-2.0}$}}
\addplot [semithick, color5, mark=square, mark size=3, mark options={solid,fill opacity=0}, only marks]
table {%
0 0.561489927288214
-0.25 0.434544540111474
-0.5 0.291913537228913
-0.75 0.137340292585322
-1 -0.039214619973243
-1.25 -0.277607193918031
-1.75 -0.958266968140438
-2 -1.41586227421855
};
\addlegendentry{\tiny{$K^\Delta=10^{-2.5}$}}
\addplot [semithick, color6, mark=x, mark size=3, mark options={solid,fill opacity=0}, only marks]
table {%
0 0.552000412416976
-0.25 0.429422500187169
-0.5 0.28932739144745
-0.75 0.136044141028862
-1 -0.0392041494276569
-1.25 -0.277610404911542
-2 -1.41581101254869
};
\addlegendentry{\tiny{$K^\Delta=10^{-3.0}$}}
\end{axis}

\end{tikzpicture}
    \caption{Convergence test for the gradient method with $N=128$ for one component without rotation.}
    \label{fig:convergence_test}
\end{figure}
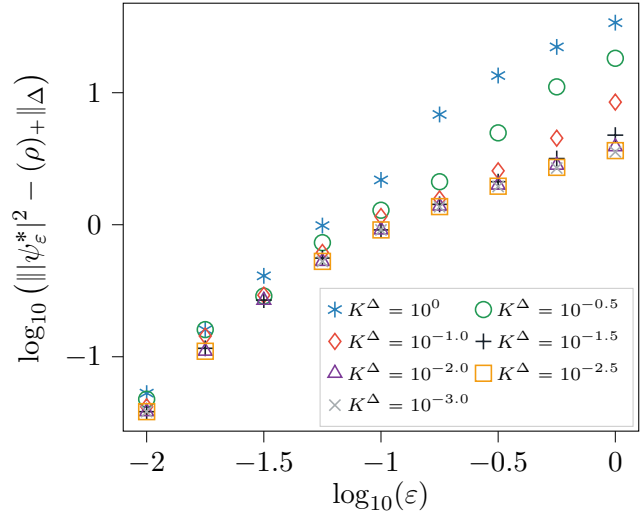

In this simple case (one component without rotation), where the limit as $\varepsilon$ tends to zero of the minimizer is known, we observe numerically that $K^{\Delta}=10^{-2}$ seems to be a good criterion to stop the gradient algorithm when $\varepsilon$ is between $10^{-2}$ and $10^{-1}$. Indeed, choosing $K^{\Delta}=10^{-2.5}$ or $K^{\Delta}=10^{-3}$ does not improve the convergence dramatically when compared with choosing $K^{\Delta}=10^{-2}$ on this numerical experiment when $\varepsilon$ is in the range of interest. 

Therefore, in the numerical experiments below, even with several components or with moderate rotation ($\Omega=\mathcal{O}(1/\varepsilon)$), we adopt $K^{\Delta}=10^{-2}$ as a criterion to stop the algorithm.

\subsubsection{Small rotational speed ($\Omega_\varepsilon < \Omega_\varepsilon^1$)}
We consider the case of no rotation ($\Omega=0$) and strong confinement ($\varepsilon=10^{-2}$).
We set the number of points in the $x$-axis and $y$-axis to $N=512$.
The gradient descent algorithm converged due to the stopping criterion
$K^\Delta$ (see \eqref{eq:defKdelta}) which has a value of $10^{-2}$.
The results are shown in Figure \ref{fig:1_espece_om_0}.
As we can see, there are no vortices in the numerical minimizer.
This is in accordance with the theory presented in section \ref{subsubsec:onecomponent}
(first case $\Omega < \Omega_\varepsilon^1$).
Moreover the squared modulus of a minimizer converges to the non-negative part of the function $\rho$
when $\varepsilon$ tends to 0, as illustrated in Figure \ref{fig:rho_and_minimizer}.

\begin{figure}[ht]
    \centering
    \begin{subfigure}[t]{0.45\textwidth}
        \centering
        \includegraphics[width=0.8\textwidth]{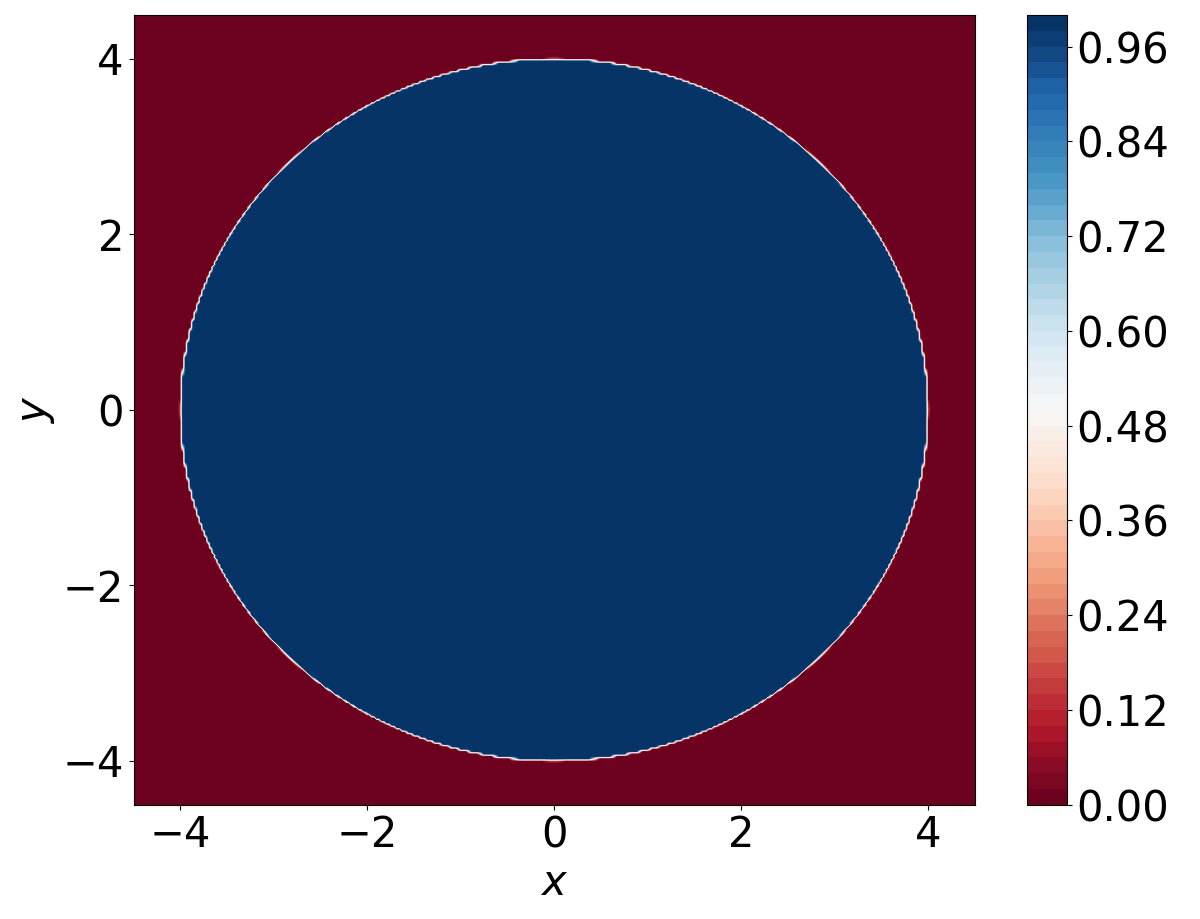}
        \subcaption{The graph of the non-negative part of $\rho$.}
    \end{subfigure}
    \hfill
    \begin{subfigure}[t]{0.45\textwidth}
        \centering
        \includegraphics[width=0.8\textwidth]{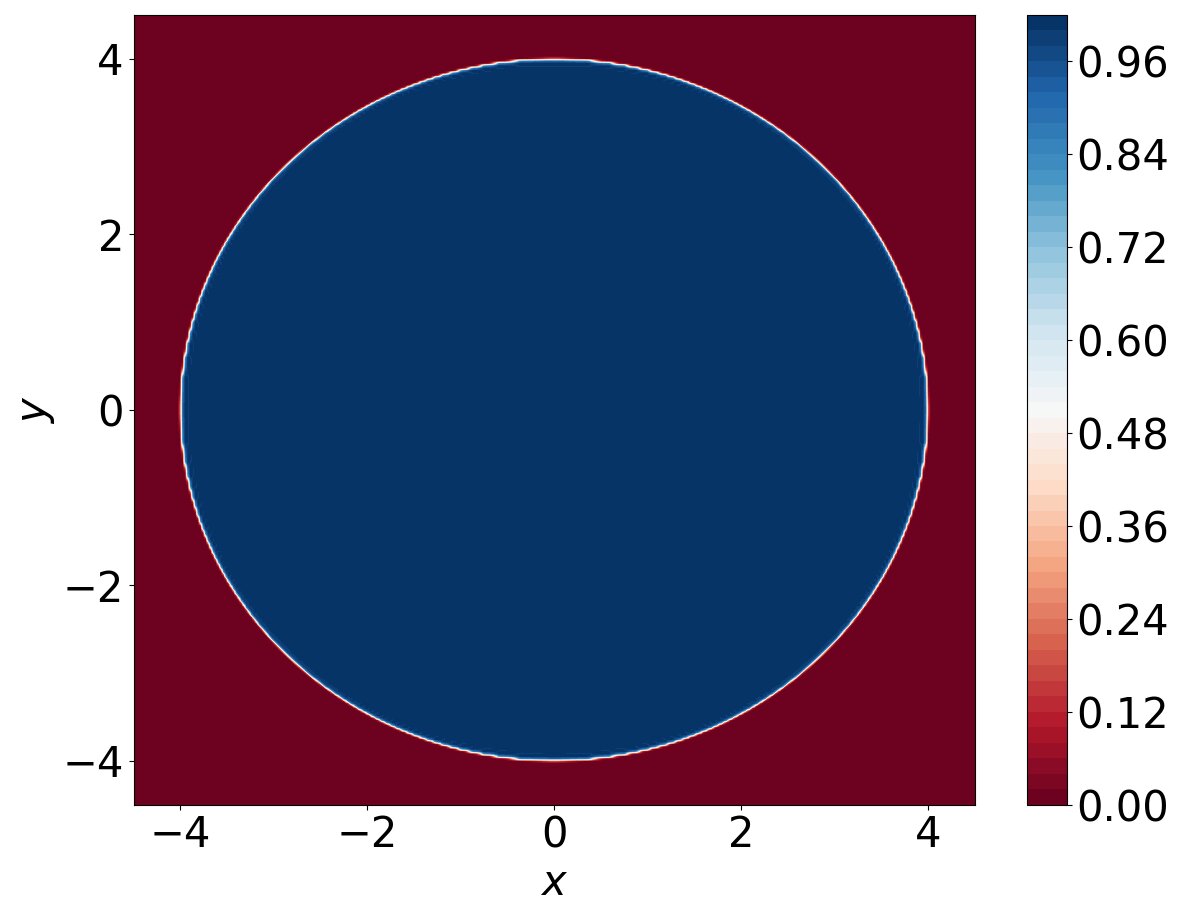}
        \subcaption{The squared modulus of a minimizer.}
    \end{subfigure}
\caption{Comparison between the non-negative part of $\rho$ (figure (A)) and the squared modulus of a minimizer for the energy $E_{\varepsilon}^\Delta$ (figure (B)) with no rotation ($\Omega=0$) and strong confinement ($\varepsilon=10^{-2}$) computed with $N=512$ and $K^\Delta=10^{-2}$.}
\label{fig:1_espece_om_0}
\end{figure}

\begin{figure}[ht]
    \centering
    \includegraphics[width=0.75\textwidth]{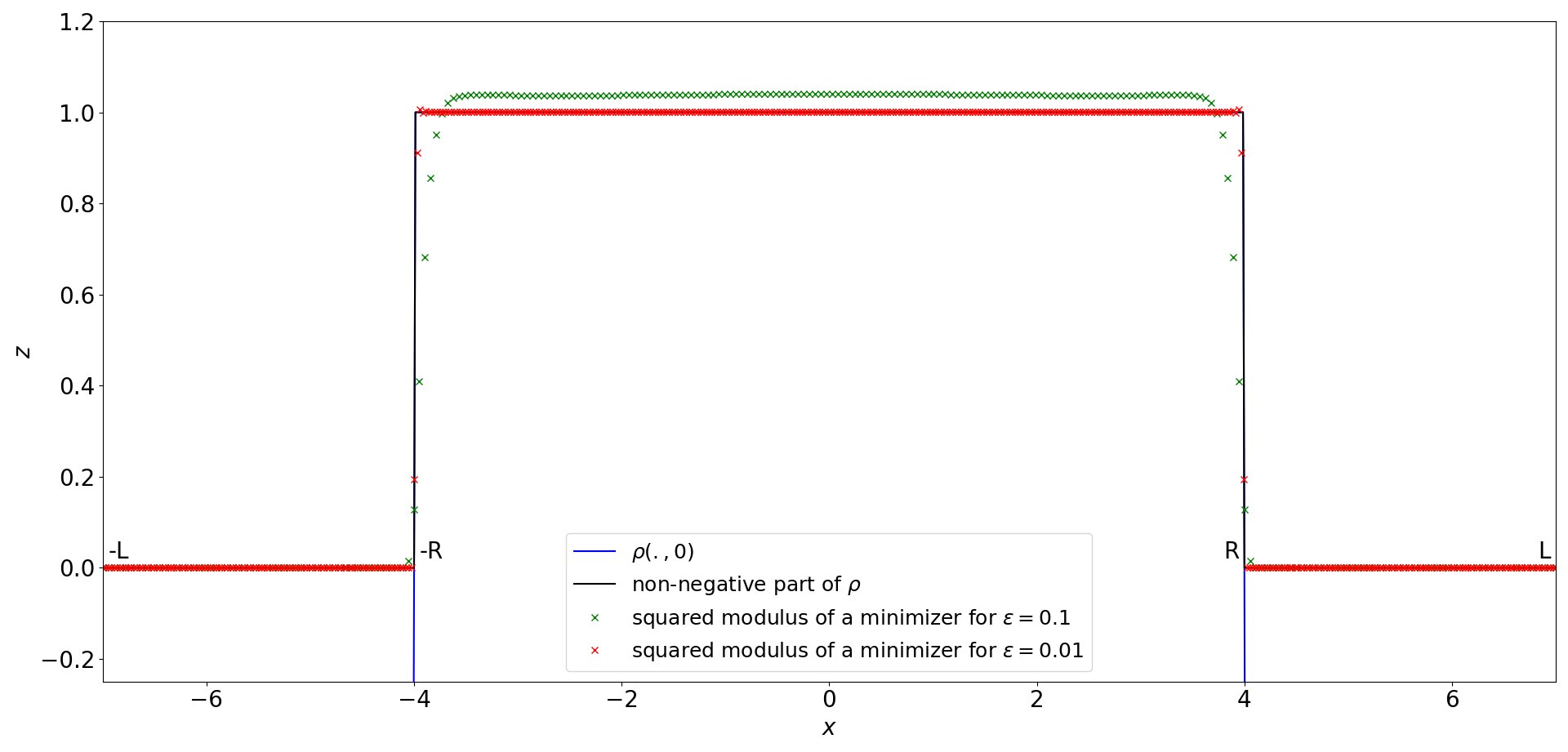}
    \caption{Comparison of $\rho$ and the squared modulus of a minimizer when $\varepsilon=0.1$
      and $\varepsilon=0.01$ along the $y$-axis.
    }
    \label{fig:rho_and_minimizer}
  \end{figure}

Figure \ref{fig:rho_and_minimizer} numerically illustrates the convergence of the squared modulus
of a minimizer for the energy $E^\Delta_{\varepsilon}$ to the non-negative part or $\rho$
when $\varepsilon$ tends to $0$ at fixed rotation speed $\Omega=0$.

\subsubsection{Moderate rotational speed ($\Omega_\varepsilon^1<\Omega_\varepsilon < \Omega_\varepsilon^2$)}

We consider the case of a moderate rotation speed ($\Omega=20$) and strong confinement
($\varepsilon=10^{-2}$), according to Figure \ref{fig:estimationzones}.
First, we compute a minimizer with $N=512$.
Then, we interpolate its real and imaginary parts and compute another minimizer with $N=1024$.
The EPG method converged due to the stopping criterion $K^\Delta$
which has a value of $5\times10^{-2}$.
We compute numerically the indices of the vortices using the algorithm described
in Section \ref{subsec:index}.
The results are shown in Figure \ref{fig:1_espece_om_20}.
As we can see, there are several vortices in the numerical minimizer and
there is no sign of a giant hole in the center.
The numerical index of most of the numerical vortices is equal to one.
This validates numerically that most zeros of the wave function have a phase circulation.
This is in accordance with the second case of the theory
presented in Section \ref{subsubsec:onecomponent}. 

\begin{figure}[ht]
    \centering
    \begin{subfigure}[t]{0.45\textwidth}
        \centering
        \includegraphics[width=0.8\textwidth]{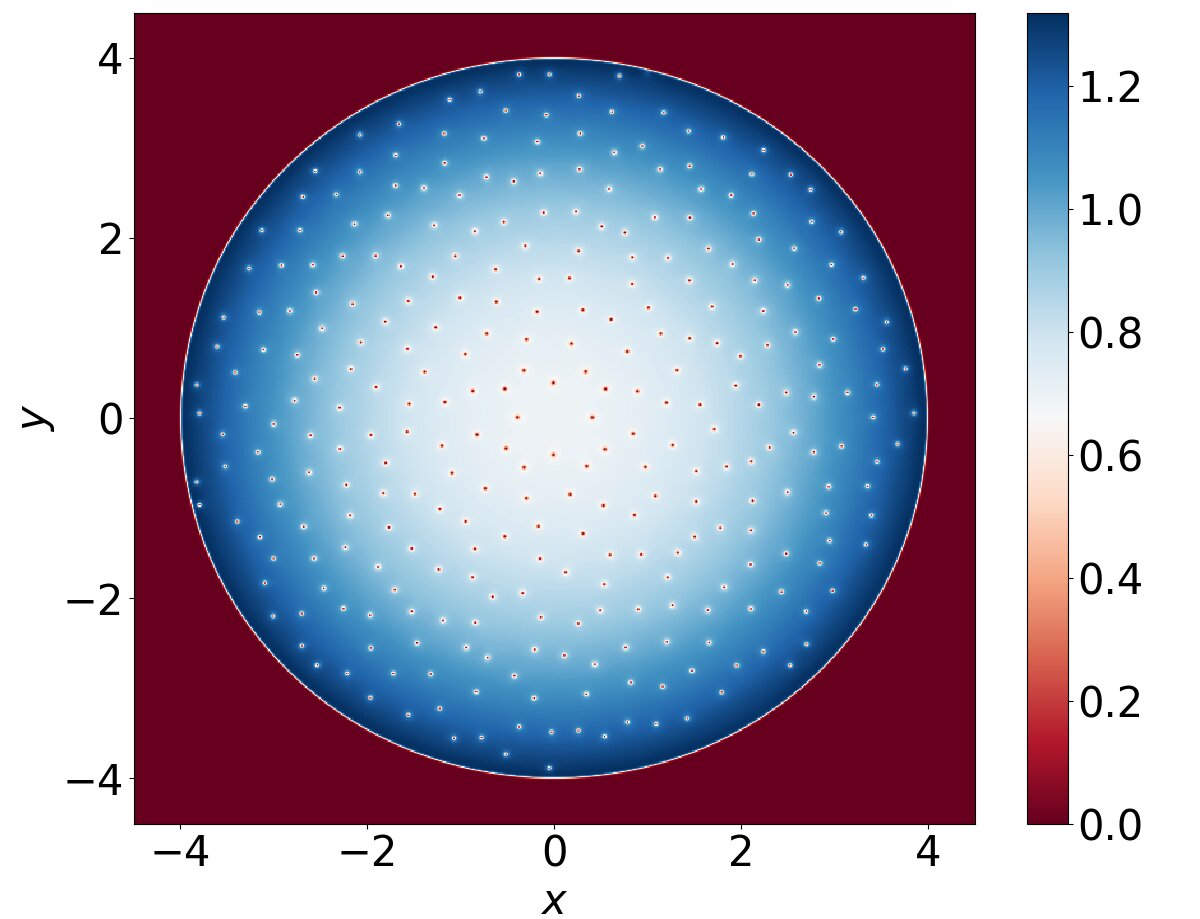}
    \end{subfigure}
    \hfill
    \begin{subfigure}[t]{0.45\textwidth}
        \centering
        \includegraphics[width=0.8\textwidth]{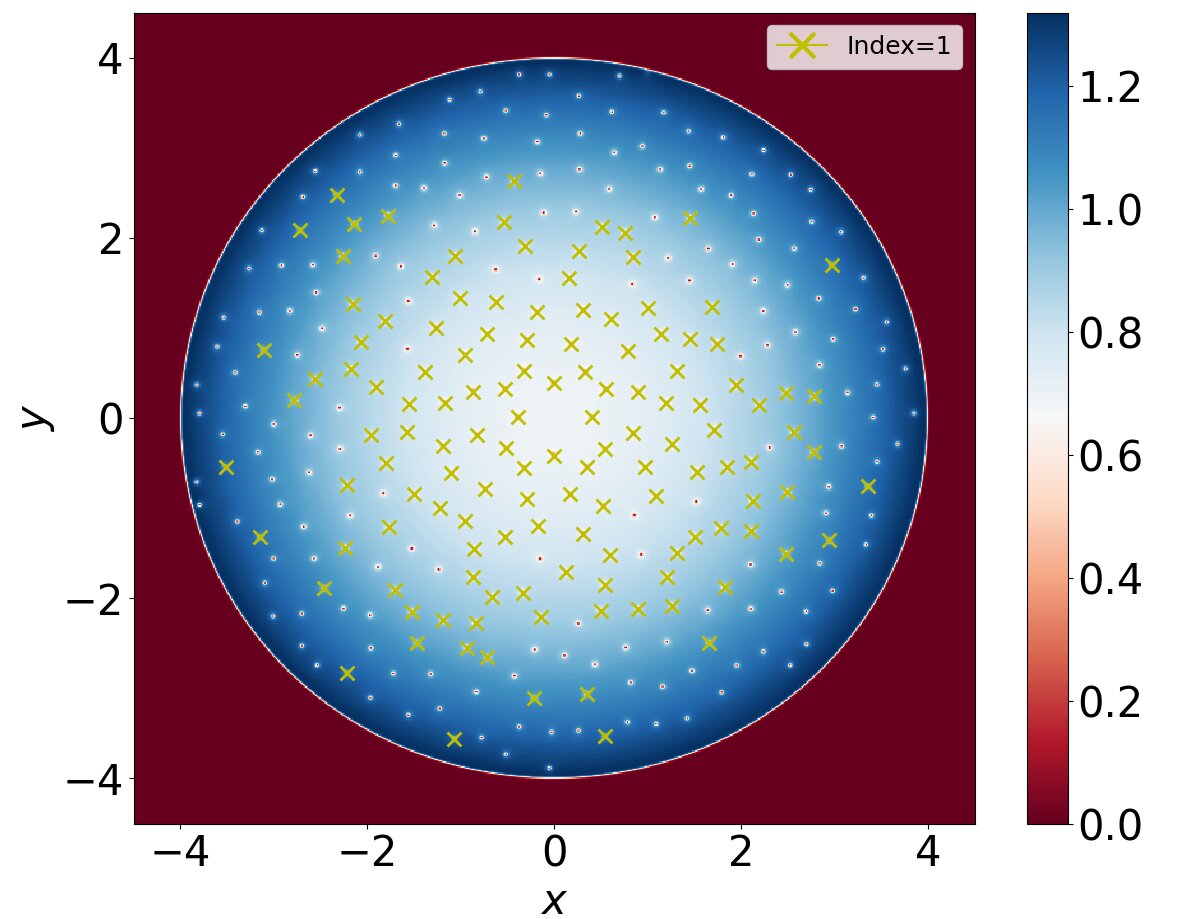}
    \end{subfigure}
\caption{The squared modulus of a minimizer for $\Omega=20$, $\varepsilon=10^{-2}$. In the right panel, the numerical computation of the indices of the vortices is carried out with $N_{min}=1$, $N_{max}=3$, $tol_1=0.1$ and $tol_2=0.05$.}
\label{fig:1_espece_om_20}
\end{figure}

In another simulation, we consider the case of a moderate yet higher rotation speed ($\Omega=30$)
and strong confinement ($\varepsilon=10^{-2}$).
Once again, we use an interpolation from $N=512$ to $N=1024$.
In this simulation, the EPG algorithm converged due to the stopping criterion $K^\Delta$
which has a value of $5\times10^{-2}$.
We also compute the indices of the vortices numerically using the algorithm described
in Section \ref{subsec:index}.
The results are shown in Figure \ref{fig:1_espece_om_30}.
As we can see, there are vortices in the numerical minimizer and there is a dark disk
in the center (not present in Figure \ref{fig:1_espece_om_20})
which indicates that we are close to the limit between zone $2$ and zone $3$
as predicted in Figure \ref{fig:estimationzones}.
The numerical index of most of the numerical vortices is equal to one.
This validates numerically that most zeros of the wave function have a phase circulation.
This 
is in accordance with the theory presented
in Section \ref{subsubsec:onecomponent}.

\begin{figure}[ht]
    \centering
    \begin{subfigure}[t]{0.45\textwidth}
        \centering
        \includegraphics[width=.8\textwidth]{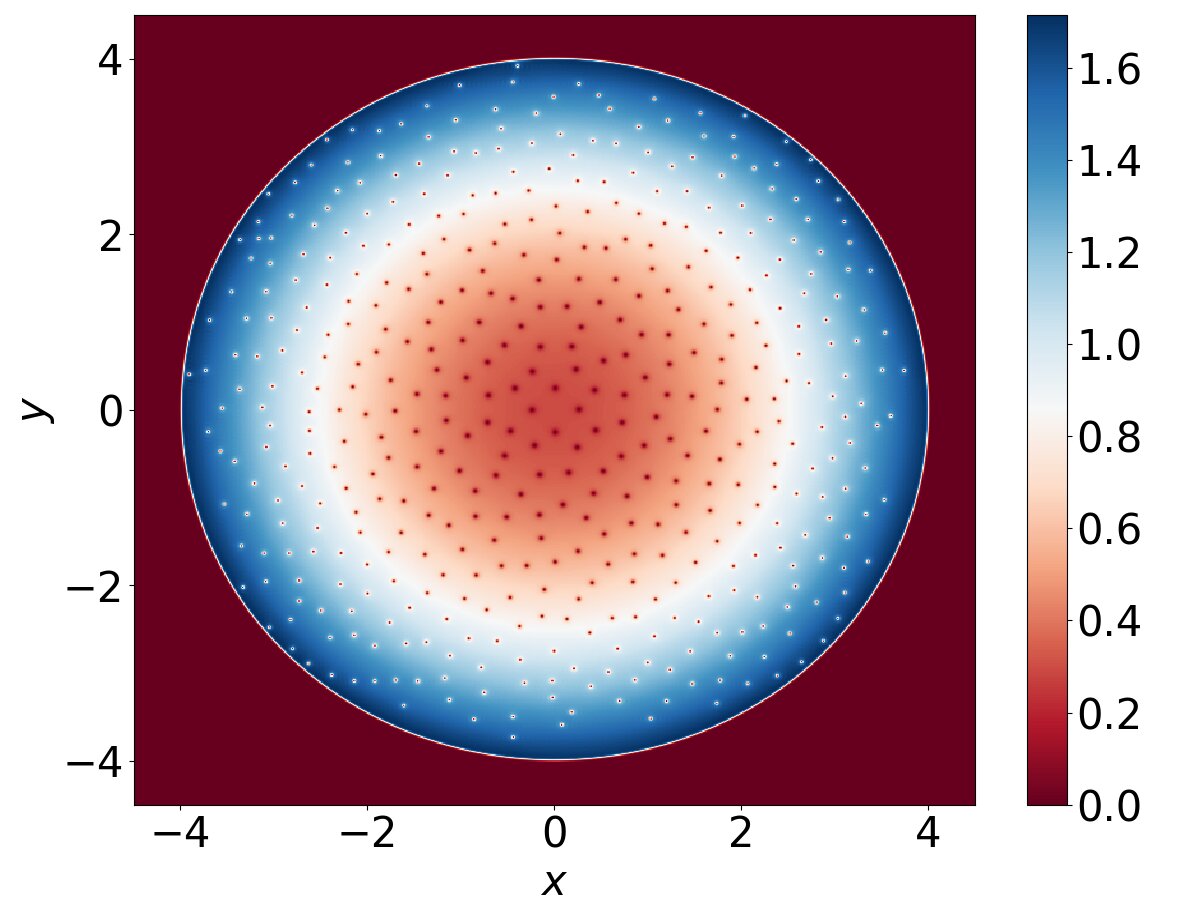}
    \end{subfigure}
    \hfill
    \begin{subfigure}[t]{0.45\textwidth}
        \centering
        \includegraphics[width=.8\textwidth]{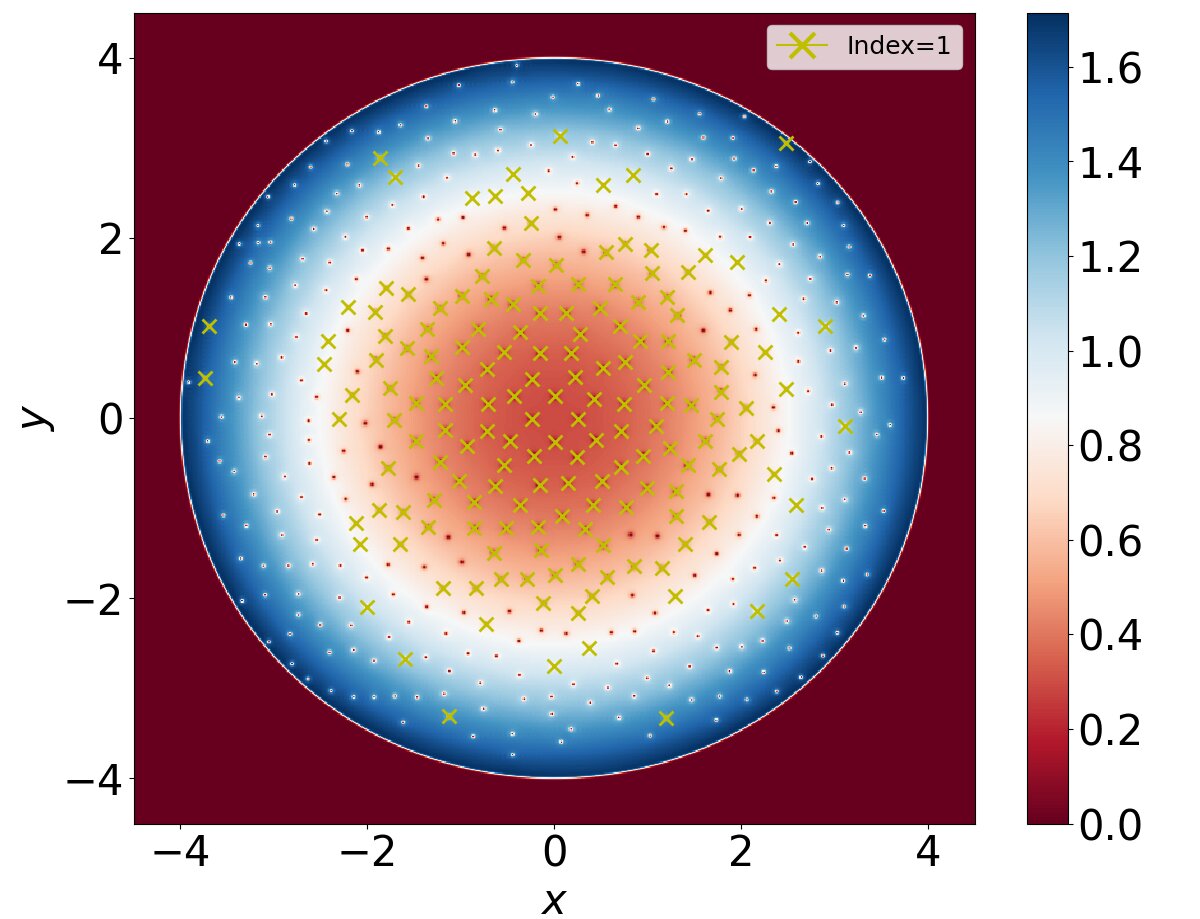}
    \end{subfigure}
\caption{The squared modulus of a minimizer for $\Omega=30$, $\varepsilon=10^{-2}$. In the right panel, the numerical computation of the indices of the vortices is carried out with $N_{min}=1$, $N_{max}=3$, $tol_1=0.05$ and $tol_2=0.02$. }
\label{fig:1_espece_om_30}
\end{figure}

\subsubsection{Big rotational speed ($\Omega_\varepsilon^2<\Omega_\varepsilon < \Omega_\varepsilon^3$)}

We consider the case of a fairly high rotation speed ($\Omega=40$) and strong confinement
($\varepsilon=10^{-2}$) in accordance with the regimes obtained in Figure \ref{fig:estimationzones}.
This simulation is carried out with $N=1024$.
The EPG algorithm converged due to the stopping criterion $K^\Delta \leq 5\times10^{-2}$.
The results are shown in Figure \ref{fig:1_espece_om_40}.
As we can see, there is a giant hole in the center,
surrounded by vortices on an annulus in the numerical minimizer.
This is due to the centrifugal force coming into play with the fairly high rotation speed
and is in accordance with the theory presented in section \ref{subsubsec:onecomponent}.

In Figures \ref{fig:1_espece_om_40_vortex_1} and \ref{fig:1_espece_om_40_vortex_2},
we compute the indices of the vortices of the numerical minimizer displayed
in Figure \ref{fig:1_espece_om_40}.
First, in Figure \ref{fig:1_espece_om_40_vortex_1}, we compute the indices of the vortices
in the annulus using the algorithm described in Section \ref{subsec:index}.
Then, in Figure \ref{fig:1_espece_om_40_vortex_2}, we compute the index of the giant hole
(see Remark \ref{rem:hole}).
The numerical parameters are indicated in the captions.
As we can see, all the indices of the numerical vortices in Figure \ref{fig:1_espece_om_40_vortex_1}
around the giant hole are equal to one which validates numerically that almost all the zeros
of the wave function have a phase circulation.
In Figure \ref{fig:1_espece_om_40_vortex_2}, the index of the giant hole is equal to $100$.
This is in accordance with the theory presented in section \ref{subsubsec:onecomponent}.

\begin{figure}[ht]
	\centering
	\begin{subfigure}[t]{0.3\textwidth}
	\includegraphics[width=\textwidth]{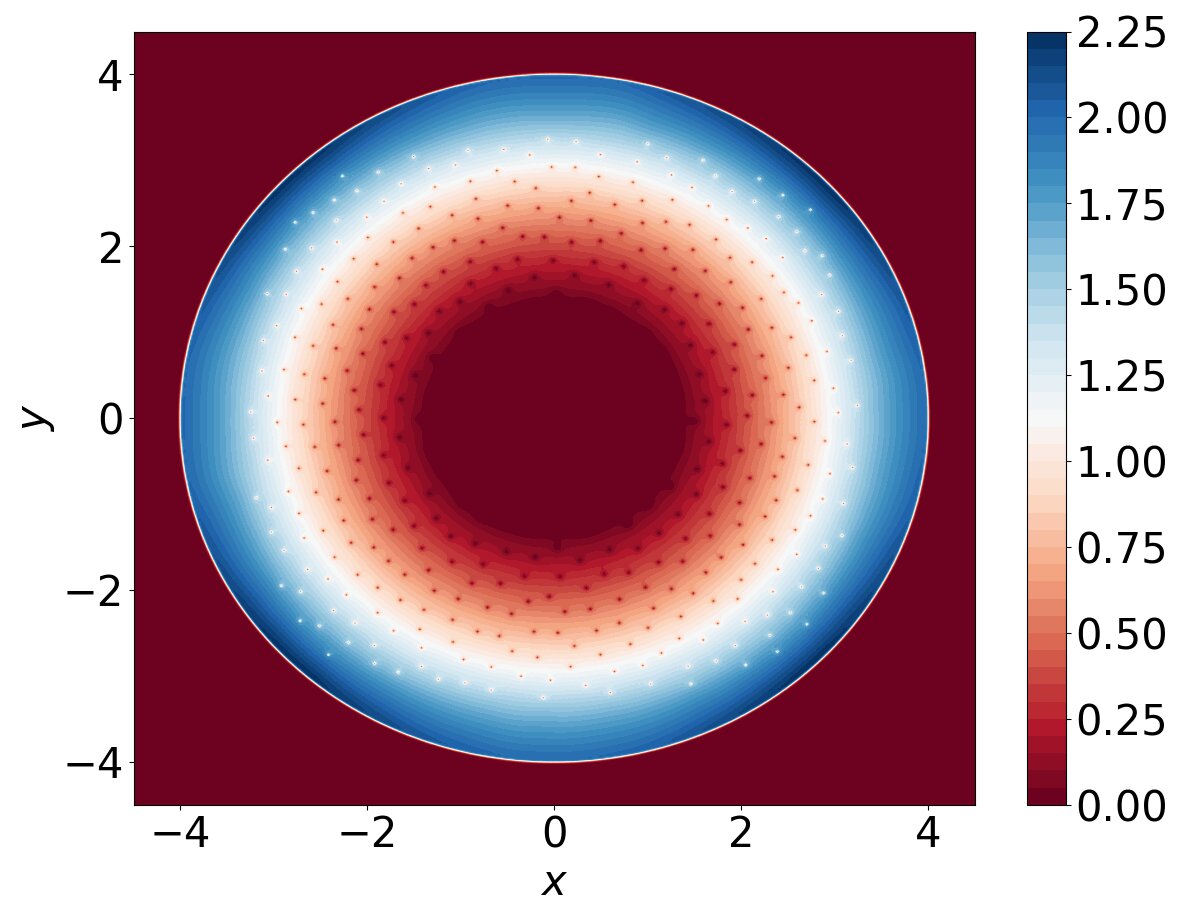}
	\subcaption{The squared modulus of a minimizer.}
	\label{fig:1_espece_om_40}
	\end{subfigure}
	\hfill
	\centering
	\begin{subfigure}[t]{0.3\textwidth}
		\centering
		\includegraphics[width=\textwidth]{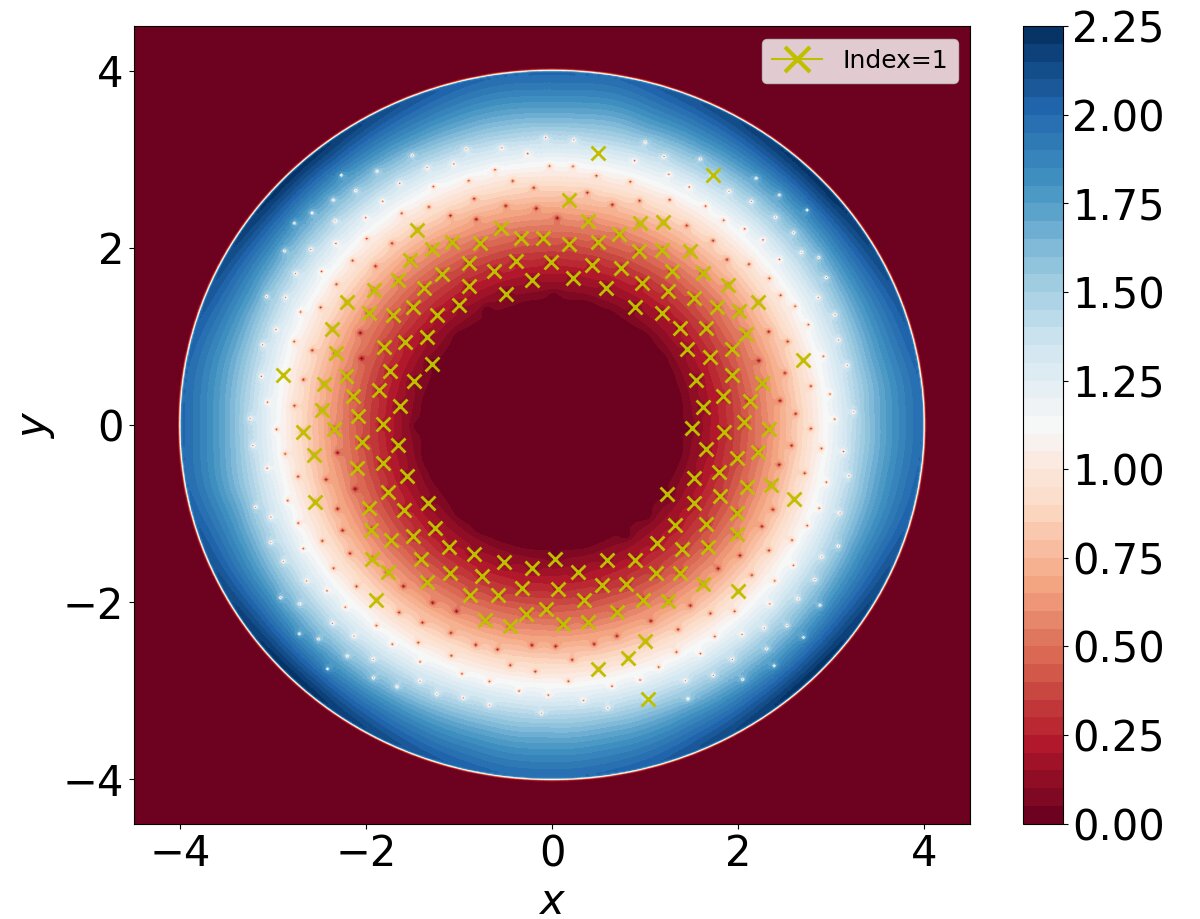}
		\subcaption{The vortices' indices of a minimizer.}
		\label{fig:1_espece_om_40_vortex_1}
	\end{subfigure}
	\hfill
	\begin{subfigure}[t]{0.3\textwidth}
		\centering
		\includegraphics[width=\textwidth]{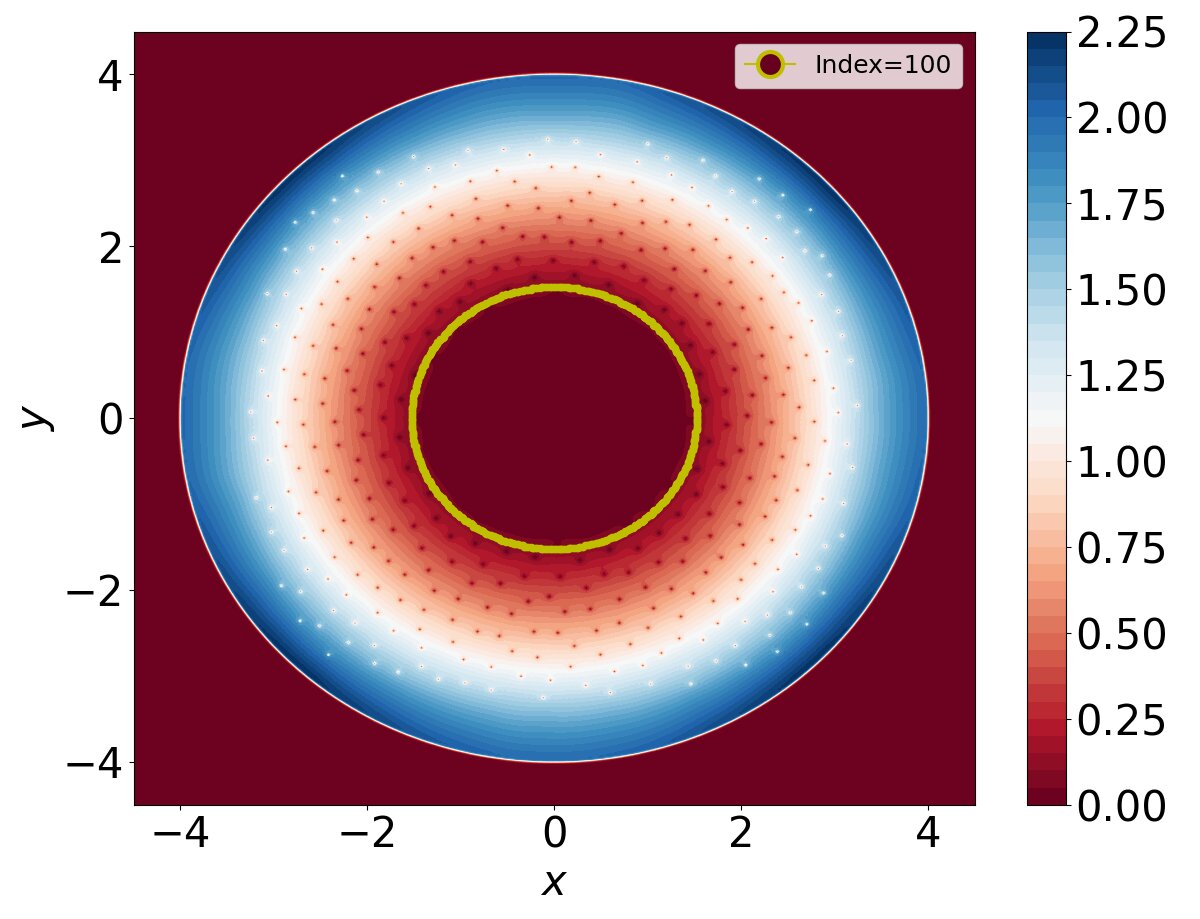}
		\subcaption{The index of the giant hole of a minimizer. 
		}
		\label{fig:1_espece_om_40_vortex_2}
	\end{subfigure}	
        \caption{The squared modulus (A) of a minimizer of the energy for $\Omega=40$
          and $\varepsilon=10^{-2}$.
          The vortices' indices of a minimizer (B) detected with $N_{min}=1$, $N_{max}=3$,
          $tol_1=0.05$ and $tol_2=0.02$.
          The giant hole's index (C) with $r=1.523$ (see remark \ref{rem:hole}).}
	\label{fig:1_espece_om_40_vortex}
\end{figure}

\subsubsection{Huge rotational speed ($\Omega_\varepsilon^3<\Omega_\varepsilon$)}

We consider the case of a huge rotation speed ($\Omega=70$) and strong confinement
($\varepsilon=10^{-2}$).
Once again, we use an interpolation from $N=512$ to $N=1024$.
The EPG algorithm converged due to the stopping criterion $K^\Delta \leq 5\times10^{-2}$.
The results are shown in Figure \ref{fig:1_espece_om_70}.
As we can see, the giant hole in the center is now bigger,
surrounded by less vortices on the annulus of the numerical minimizer than before.
This is due to the centrifugal force coming into play with a very high rotation speed
and is in accordance with the theory presented in section \ref{subsubsec:onecomponent}
(fourth case $\Omega^3_\varepsilon < \Omega$).

In Figures \ref{fig:1_espece_om_70_vortex_1} and \ref{fig:1_espece_om_70_vortex_2},
we compute the indices of the vortices of the numerical minimizer
displayed in Figure \ref{fig:1_espece_om_70}.
First, in Figure \ref{fig:1_espece_om_70_vortex_1}, we compute the indices of the vortices
in the annulus using the algorithm of Section \ref{subsec:index}.
Then, in Figure \ref{fig:1_espece_om_70_vortex_2}, we compute the index of the giant hole
(see Remark \ref{rem:hole}).
As we can see, most of the indices of the numerical vortices
in Figure \ref{fig:1_espece_om_70_vortex_1} around the giant hole are equal to one,
which validates numerically that most of the zeros of the function have a phase circulation.
The index of the giant hole is equal to $522$ (see Figure \ref{fig:1_espece_om_70_vortex_2}).
This is in accordance with the theory presented in section \ref{subsubsec:onecomponent}.

\begin{figure}[ht]
	\centering
	\begin{subfigure}[t]{0.3\textwidth}
	\includegraphics[width=\textwidth]{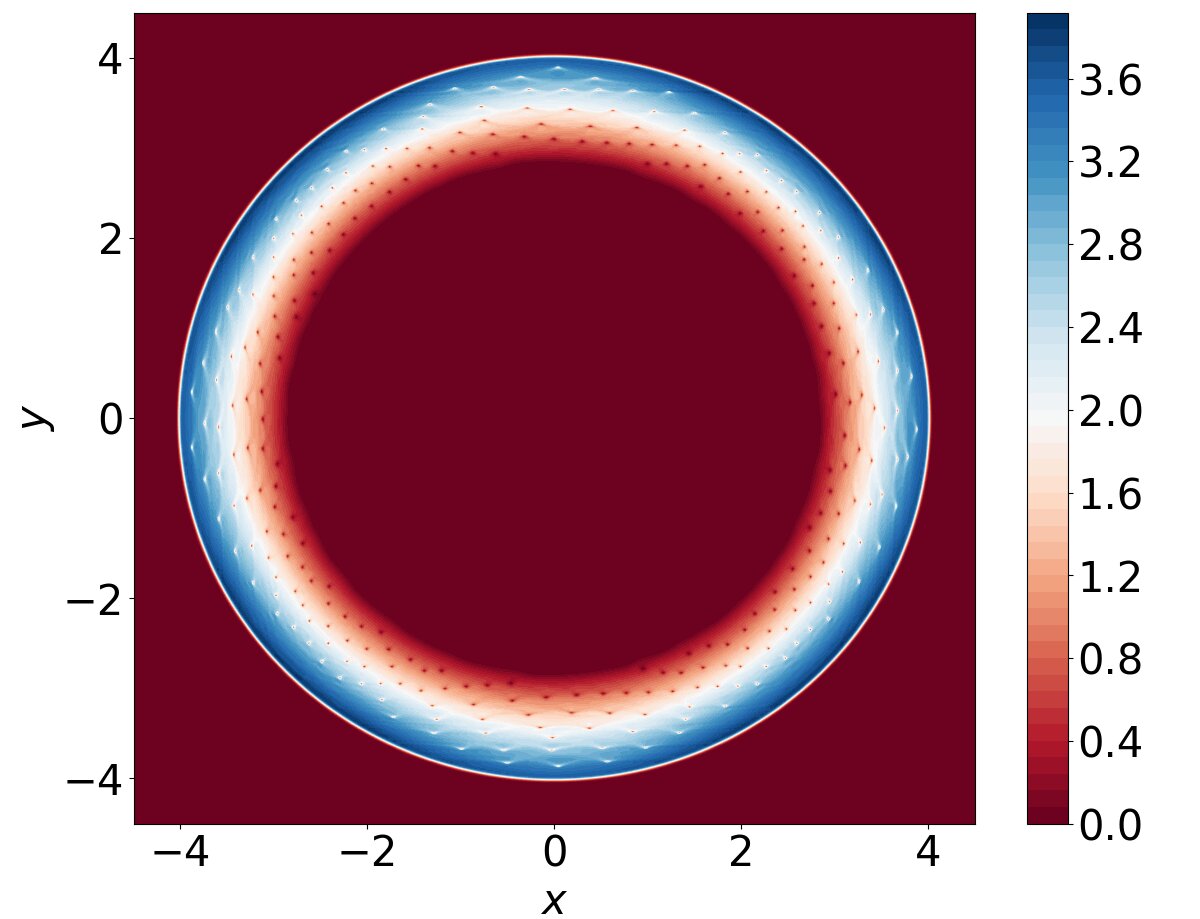}
	\subcaption{The squared modulus of a minimizer.}
	\label{fig:1_espece_om_70}
	\end{subfigure}
	\hfill
	\begin{subfigure}[t]{0.3\textwidth}
		\centering
		\includegraphics[width=\textwidth]{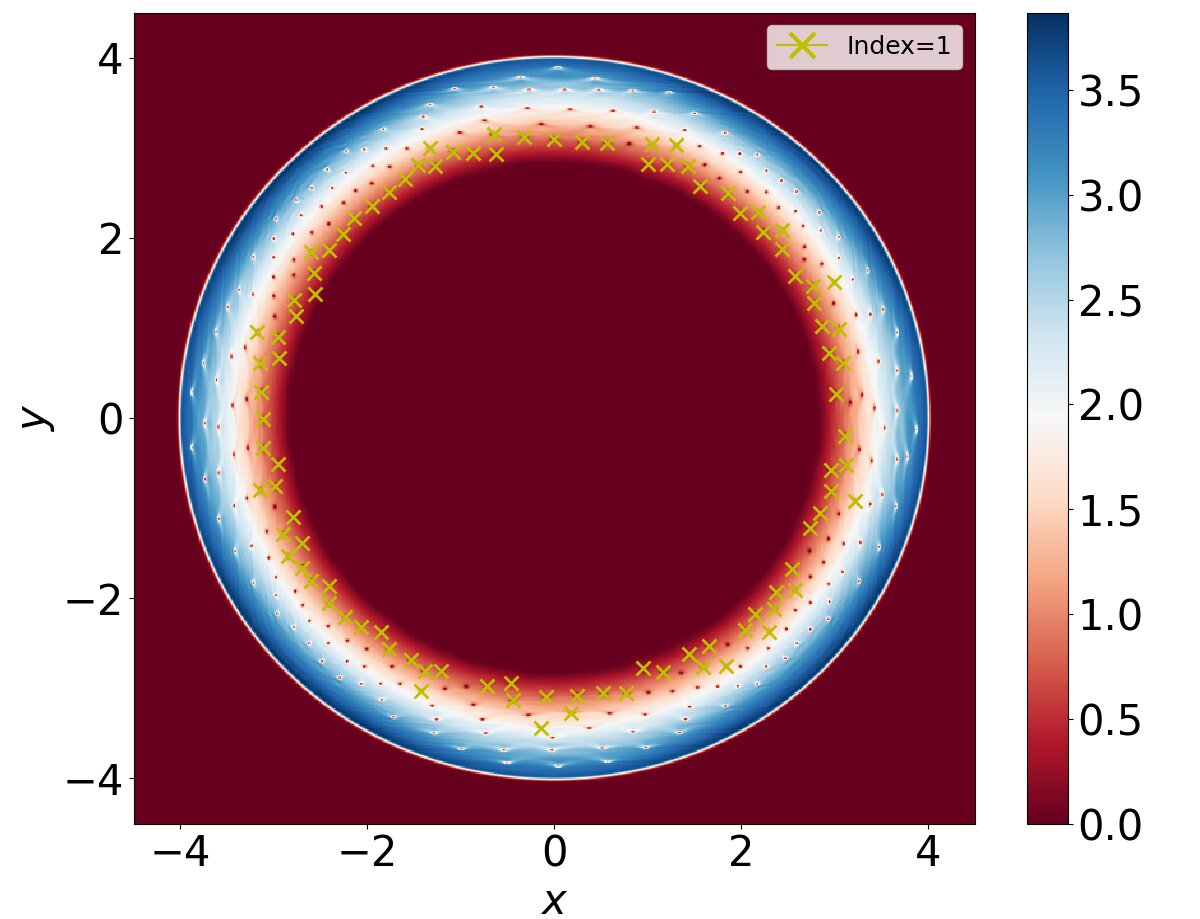}
		\subcaption{The vortices' indices of a minimizer.}
		\label{fig:1_espece_om_70_vortex_1}
	\end{subfigure}
	\hfill
	\begin{subfigure}[t]{0.3\textwidth}
		\centering
		\includegraphics[width=\textwidth]{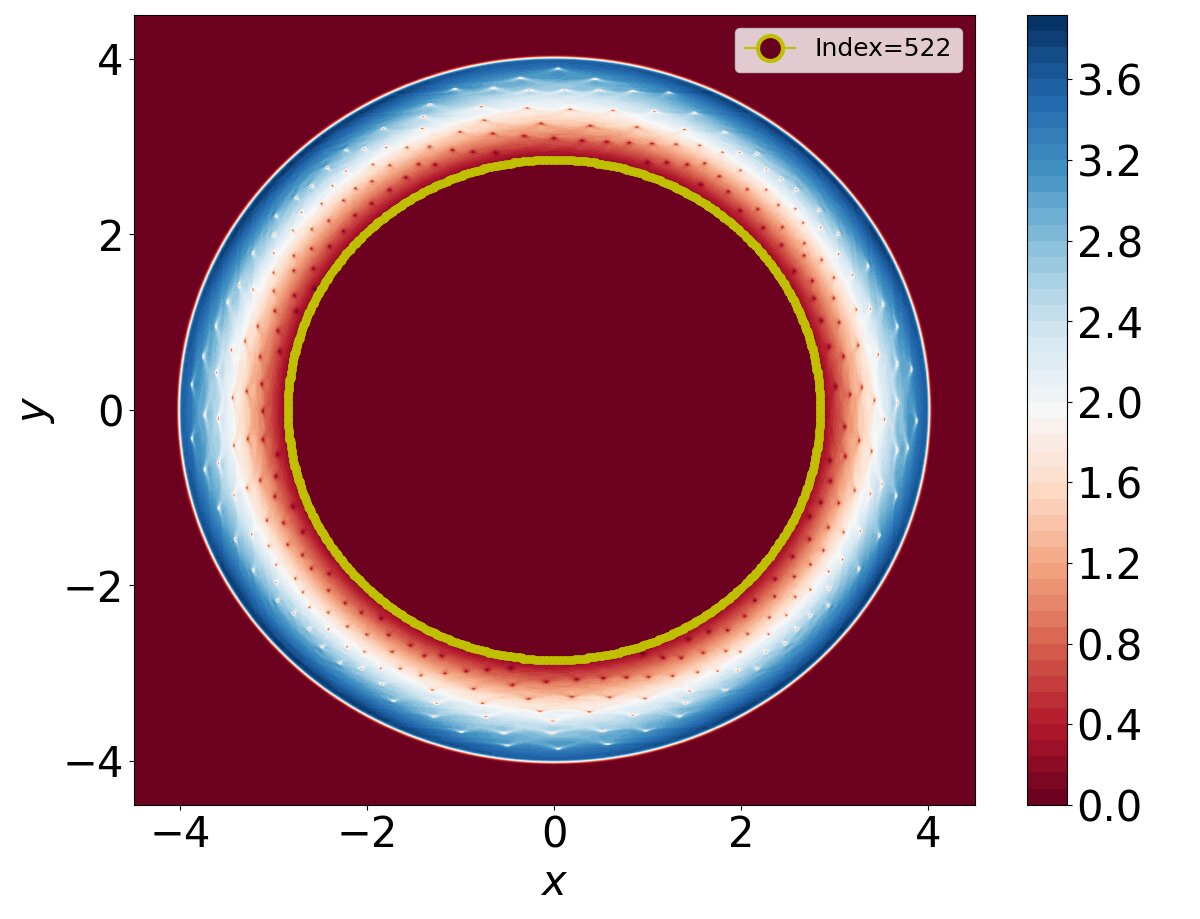}
		\subcaption{The index of the giant hole.}
		\label{fig:1_espece_om_70_vortex_2}
	\end{subfigure}	
	\caption{The squared modulus (A) of a minimizer of the energy for $\Omega=70$,
          $\varepsilon=10^{-2}$.
          The vortices' indices of a minimizer of the energy (B) detected with
          $N_{min}=1$, $N_{max}=3$, $tol_1=0.05$ and $tol_2=0.02$.
          The index of the giant hole (C) detected with $r=2.85$ (see Remark \ref{rem:hole}).}
	\label{fig:1_espece_om_70_vortex}
\end{figure}

\subsection{Two components condensate without rotation ($\Omega=0$)}
\label{subsec:simulation_twocomponent}

In this section, we move on the theoretical results described in the first part of Section
\ref{subsubsec:twocomponent} that deal with two components Bose--Einstein condensates,
without rotation, in different (strong, moderate and weak) segregation ($\delta>1$) regimes.

\subsubsection{Common numerical parameters}
\label{subsubsec:param_2species_norotation}

In this section, we use the following parameters.
The function $\rho$ is the same as in \eqref{eq:defrho}.
The physical parameters are $\varepsilon=5\times10^{-2}$, $N_1=0.55$ and $N_2=0.45$.
The initial data are given by $\psi^1=\psi^2=\exp({-10x^2-10y^2})/5$.
The discretization parameters are $L=7$, $R=4$, $N=256$, $h=0.1$ and $h_0=10^{-12}$.
The stopping criterion value for $K^\Delta$ is set to $10^{-2}$.
		
Note that for the simulations of Figures \ref{fig:2_espece_delta=4000}
and \ref{fig:2_espece_delta=1,02}),
we started with the minimizers obtained in Figure \ref{fig:2_espece_delta=1,5} as initial data.  

\subsubsection{The strong segregation regime: $\delta_\varepsilon\times\varepsilon^2 \to +\infty$ }
\label{subsubsec:strongconfin_2species}

We consider the case of strong segregation regime ($\delta_\varepsilon=4000$)
and strong confinement so that $\delta_\varepsilon \times\varepsilon^2=10$.
The results are shown in Figure \ref{fig:2_espece_delta=4000}.
As we can see in Figure \ref{fig:2_espece_delta=4000_1} and \ref{fig:2_espece_delta=4000_2},
the numerical support of the two components tend to {\it not} overlap,
thereby confirming that we are in the segregation regime.
In Figure \ref{fig:2_espece_delta=4000_12}, we can see that the sum of the squared modulus
of the two components has a minimum inside the disc (away from the boundary of the disc),
with $\min_{(x,y) \in D}|\psi^{1*}|^2+|\psi^{2*}|^2 \approx 0.14$.
This is in accordance with the theory presented in section \ref{subsubsec:twocomponent}.

\begin{figure}[ht]
	\centering
	\begin{subfigure}[t]{0.3\textwidth}
		\centering
		\includegraphics[width=\textwidth]{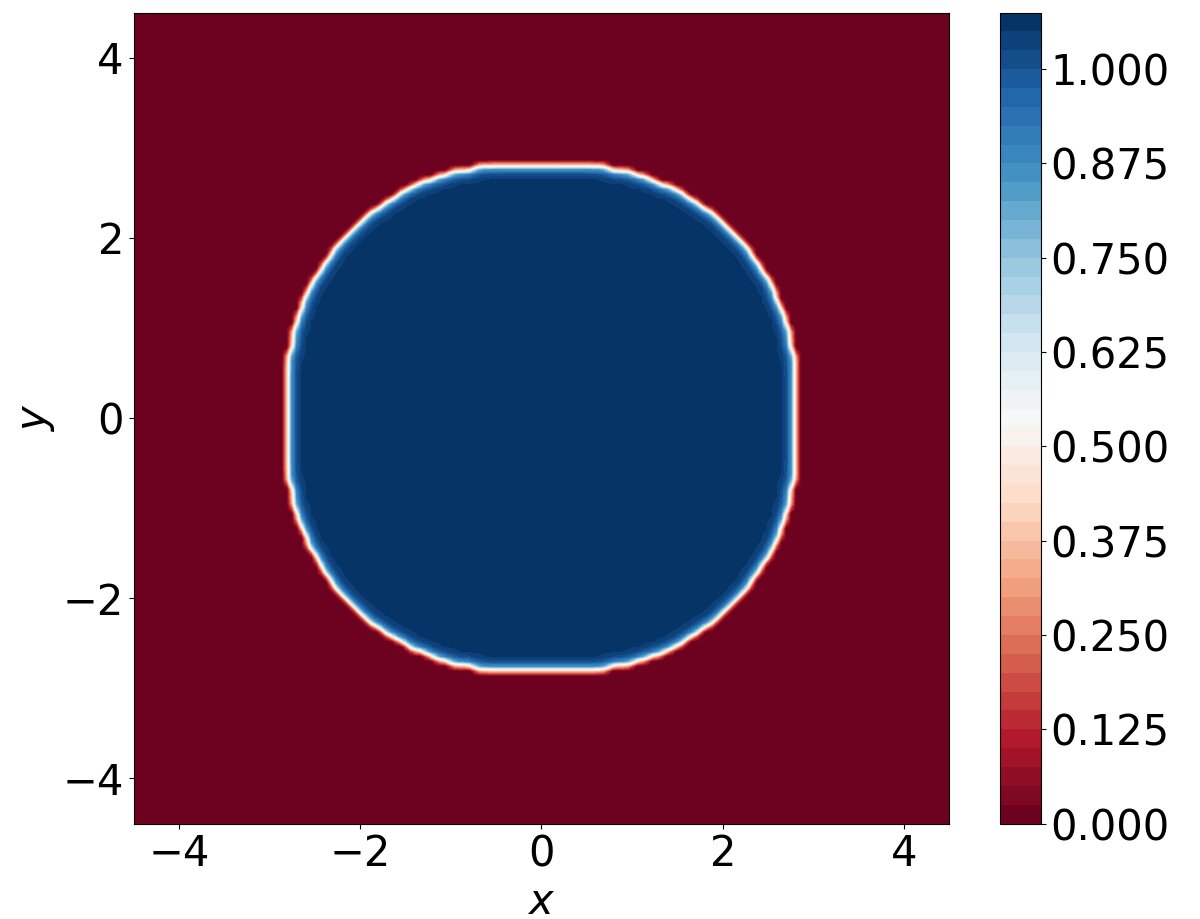}
		\subcaption{The squared modulus of the first component.}
		\label{fig:2_espece_delta=4000_1}
	\end{subfigure}
	\hfill
	\begin{subfigure}[t]{0.3\textwidth}
		\centering
		\includegraphics[width=\textwidth]{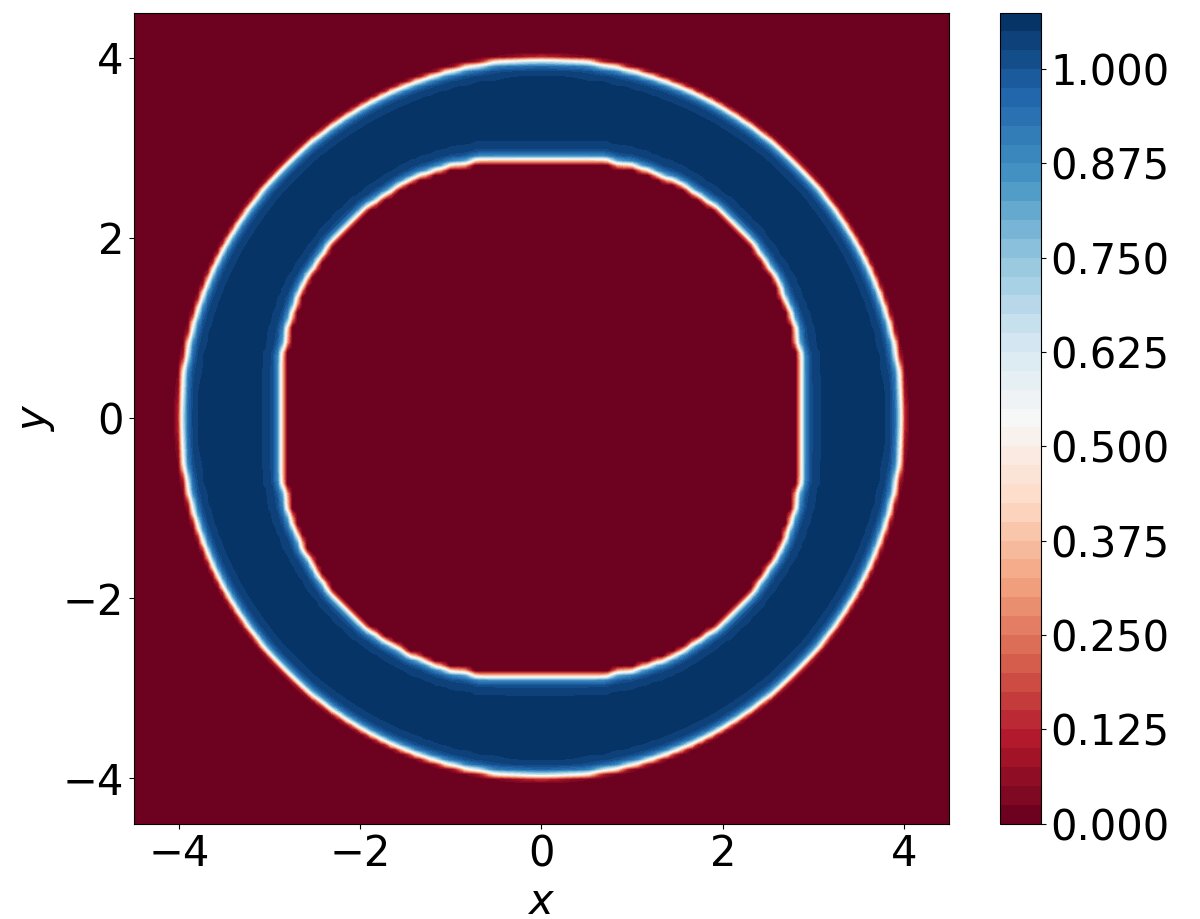}
		\subcaption{The squared modulus of the second component.}
		\label{fig:2_espece_delta=4000_2}
	\end{subfigure}
	\hfill
	\begin{subfigure}[t]{0.3\textwidth}
		\centering
		\includegraphics[width=\textwidth]{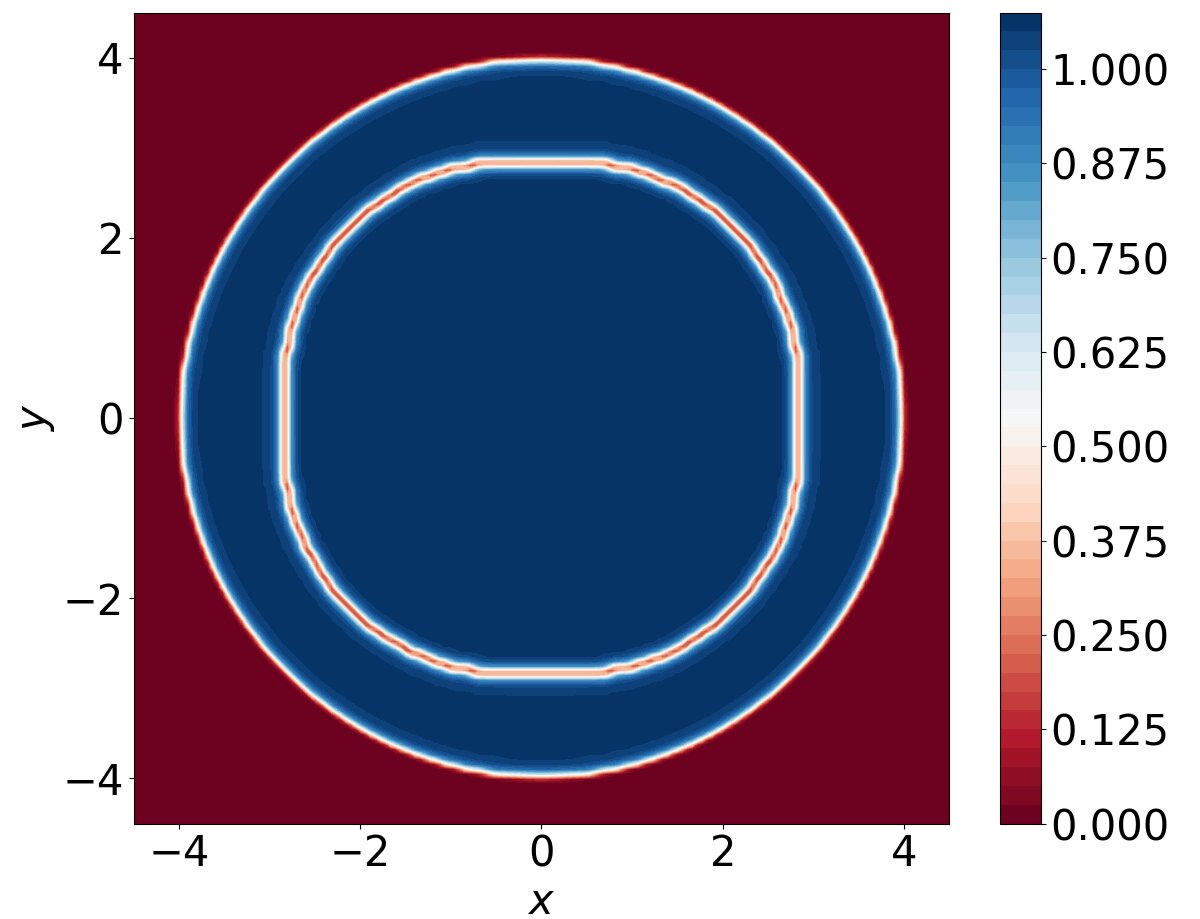}
		\subcaption{The sum of the squared moduli of both components.}
		\label{fig:2_espece_delta=4000_12}
	\end{subfigure}
	\caption{The squared modulus of the first component (A) and
          the second component (B) for a minimizer in the case of two components condensate with
          no rotation and $\delta_\varepsilon=4000$. Fig (C) displays their sum.}
	\label{fig:2_espece_delta=4000}
\end{figure}

\subsubsection{The moderate segregation regime: $\delta>1$ fixed and small $\varepsilon$}
\label{subsubsec:moderatesegregation_2species}      

We consider the case of moderate segregation regime ($\delta=1.5$) and strong confinement,
still without rotation ($\Omega=0$).
The results are shown in Figure \ref{fig:2_espece_delta=1,5}.
The results displayed in Figures \ref{fig:2_espece_delta=1,5_1} and \ref{fig:2_espece_delta=1,5_2}
confirm that we are in the segregation regime ($\delta>1$).
Figure \ref{fig:2_espece_delta=1,5_12} presents the sum of the two squared moduli
of the two components of the minimizer.
We can see a white curve corresponding to the separation between the two components
with $\min_{(x,y) \in D}|\psi^{1*}|^2+|\psi^{2*}|^2 \approx 0.78$.
This is in accordance with the theory presented in section \ref{subsubsec:twocomponent}
(second case with no rotation and fixed $\delta>1$).

\begin{figure}[ht]
	\centering
	\begin{subfigure}[t]{0.3\textwidth}
		\centering
		\includegraphics[width=\textwidth]{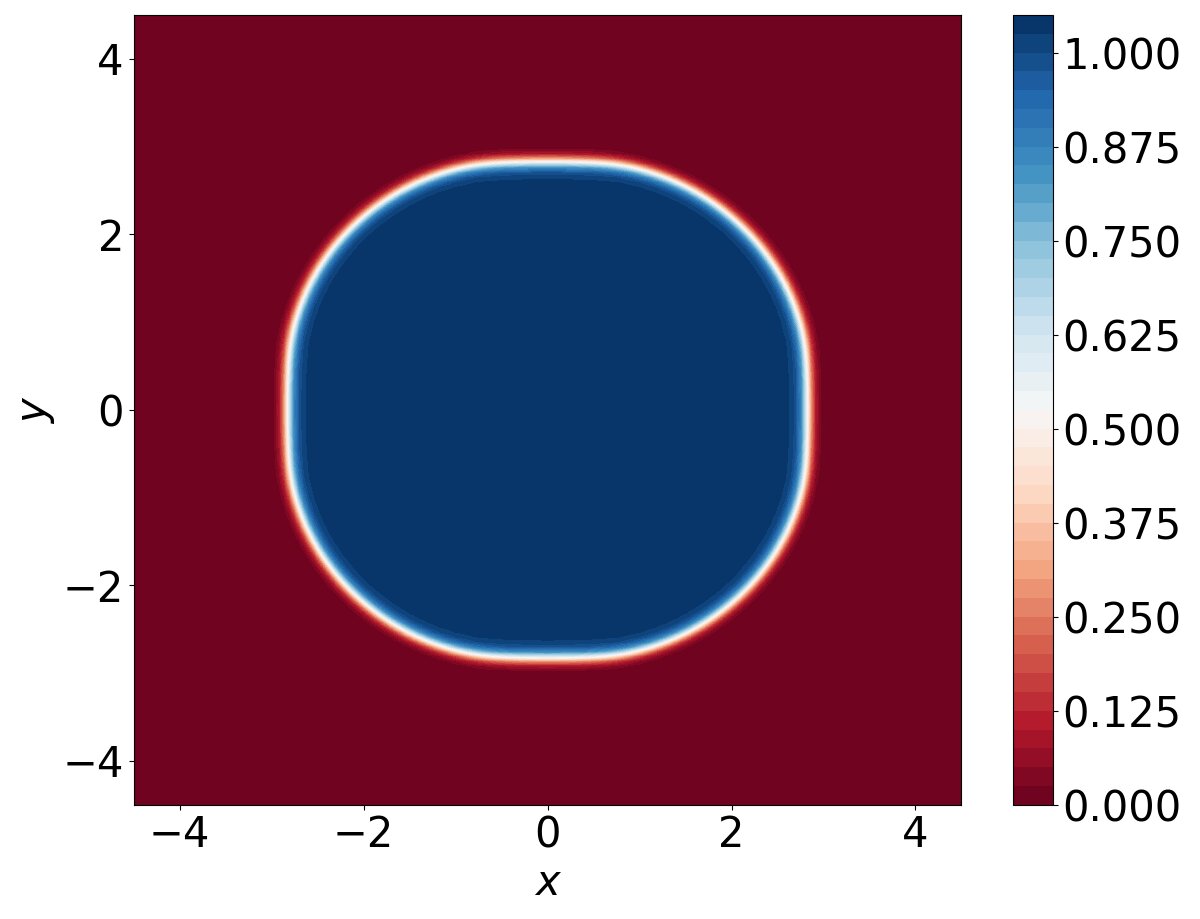}
		\subcaption{The squared modulus of the first component.}
		\label{fig:2_espece_delta=1,5_1}
	\end{subfigure}
	\hfill
	\begin{subfigure}[t]{0.3\textwidth}
		\centering
		\includegraphics[width=\textwidth]{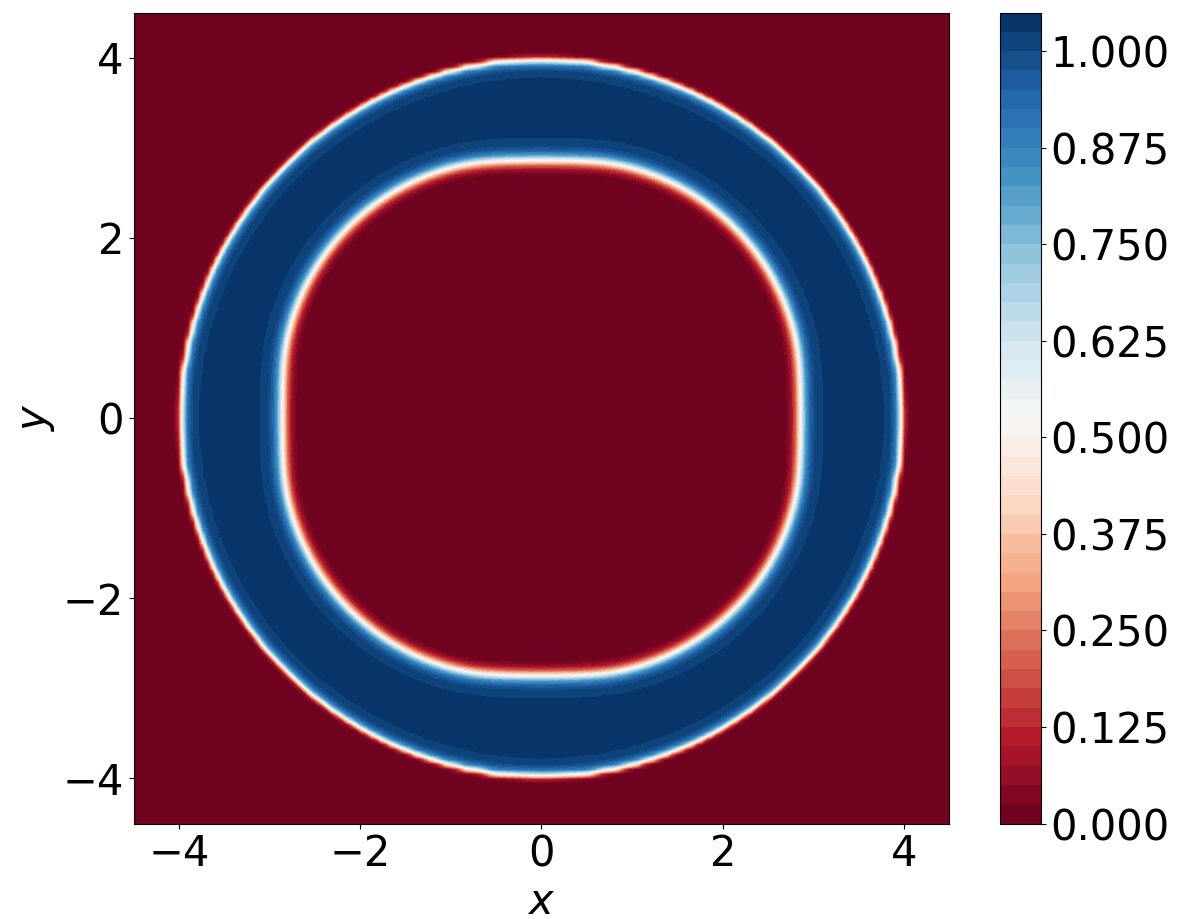}
		\subcaption{The squared modulus of the second component.}
		\label{fig:2_espece_delta=1,5_2}
	\end{subfigure}
	\hfill
	\begin{subfigure}[t]{0.3\textwidth}
		\centering
		\includegraphics[width=\textwidth]{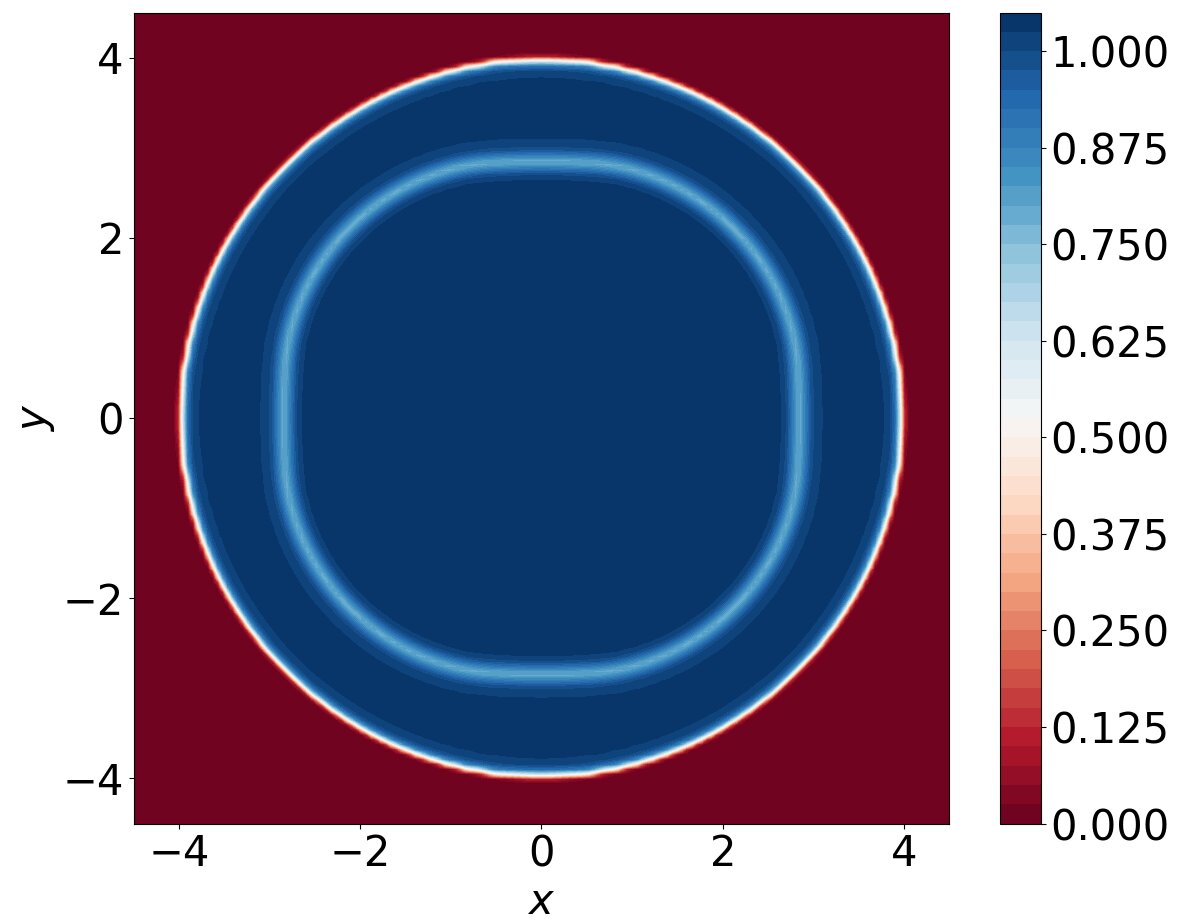}
		\subcaption{The sum of the squared moduli of both components.}
		\label{fig:2_espece_delta=1,5_12}
	\end{subfigure}
	
	\caption{The squared modulus of the first component (A) and
          of the second component (B) for a minimizer in the case of two components condensate
          with no rotation and $\delta=1.5$.
          Their sum is displayed in Fig. (C).}
	\label{fig:2_espece_delta=1,5}
\end{figure}

\subsubsection{The weak segregation regime: $\delta_\varepsilon \to 1$ with
  $\varepsilon/\sqrt{\delta_\varepsilon-1} \to 0$}
\label{subsubsec:weaksegregation_2species}        

We consider the case of segregation regime ($\delta_\varepsilon=1.02$) and strong confinement,
still without rotation ($\Omega=0$).
The results are shown in Figure \ref{fig:2_espece_delta=1,02}.
Figures \ref{fig:2_espece_delta=1,02_1} and \ref{fig:2_espece_delta=1,02_2} confirm
that we are still in the segregation regime.
In contrast to the two previous cases, we can see in figure \ref{fig:2_espece_delta=1,02_12}
that the sum of the two squared moduli does {\it not} present a seperation area between the components.
Indeed, apart from close to the boundary of the disc, the sum of the squared moduli
is almost constant, with an approximate value of $0.976$.
With the notations of Section \ref{subsubsec:twocomponent}, we have $\tilde{\varepsilon}=0.35$
in this third simulation.
The results are in accordance with the theory
(third case with no rotation, $\delta_\varepsilon$ close to $1$ and $\tilde{\varepsilon}$
close to $0$ in Section \ref{subsubsec:twocomponent}).

\begin{figure}[ht]
	\centering
	\begin{subfigure}[t]{0.3\textwidth}
		\centering
		\includegraphics[width=\textwidth]{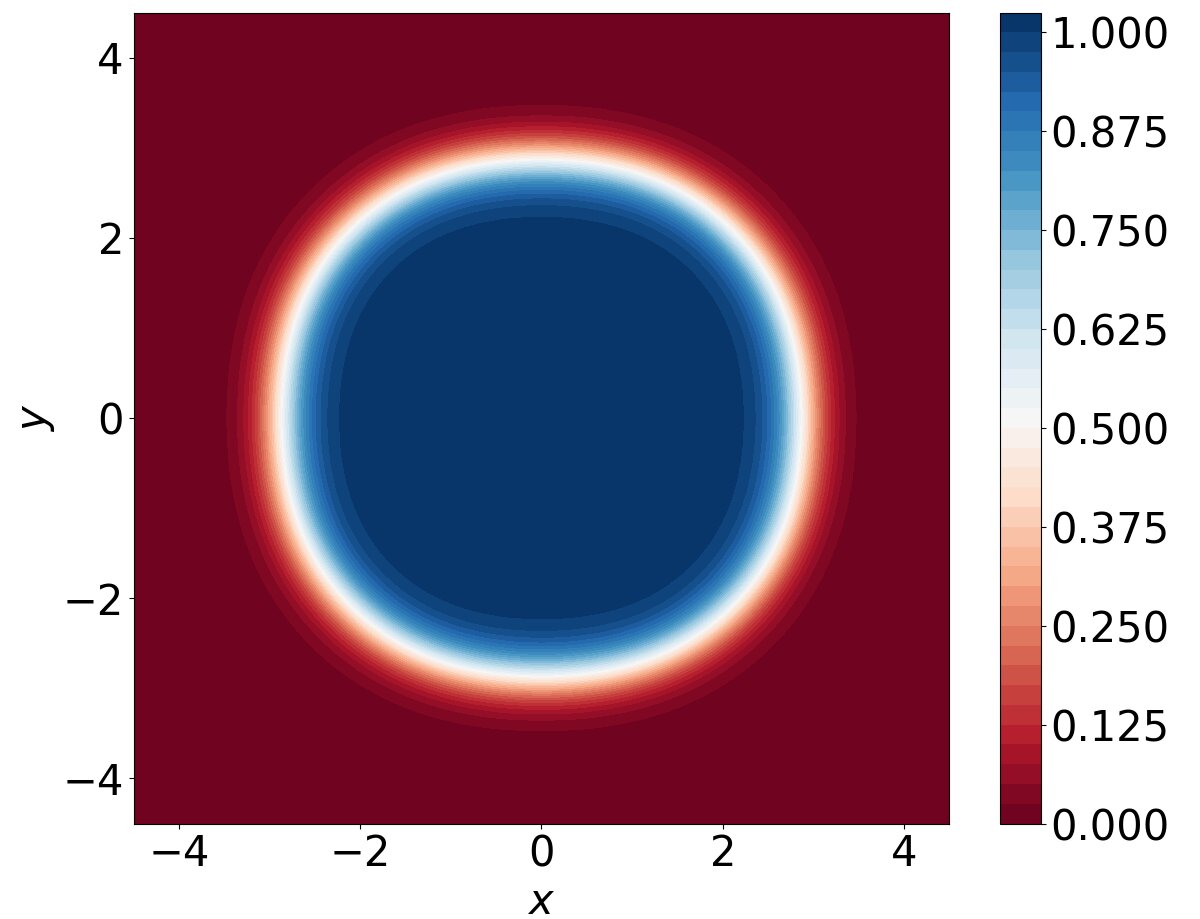}
		\subcaption{The squared modulus of the first component.}
		\label{fig:2_espece_delta=1,02_1}
	\end{subfigure}
	\hfill
	\begin{subfigure}[t]{0.3\textwidth}
		\centering
		\includegraphics[width=\textwidth]{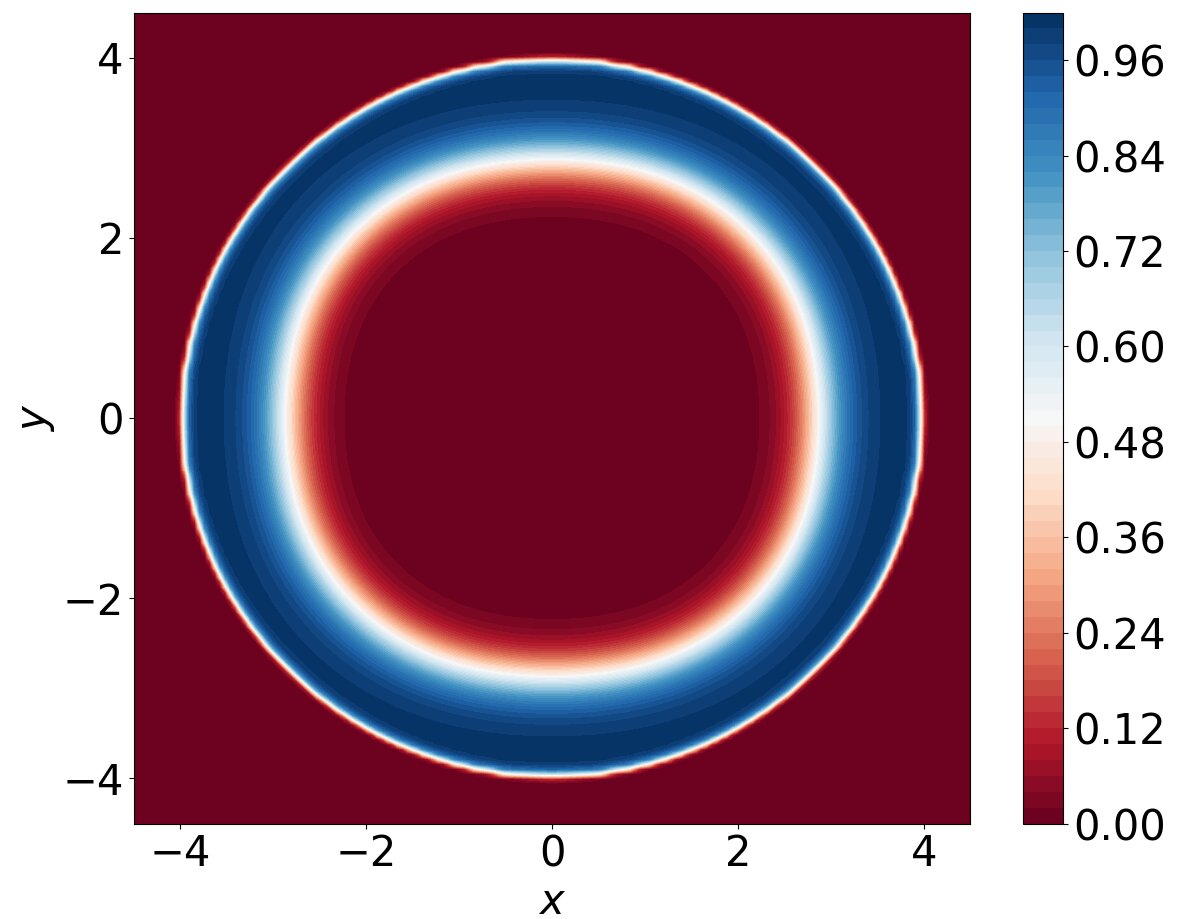}
		\subcaption{The squared modulus of the second component.}
		\label{fig:2_espece_delta=1,02_2}
	\end{subfigure}
	\hfill
	\begin{subfigure}[t]{0.3\textwidth}
		\centering
		\includegraphics[width=\textwidth]{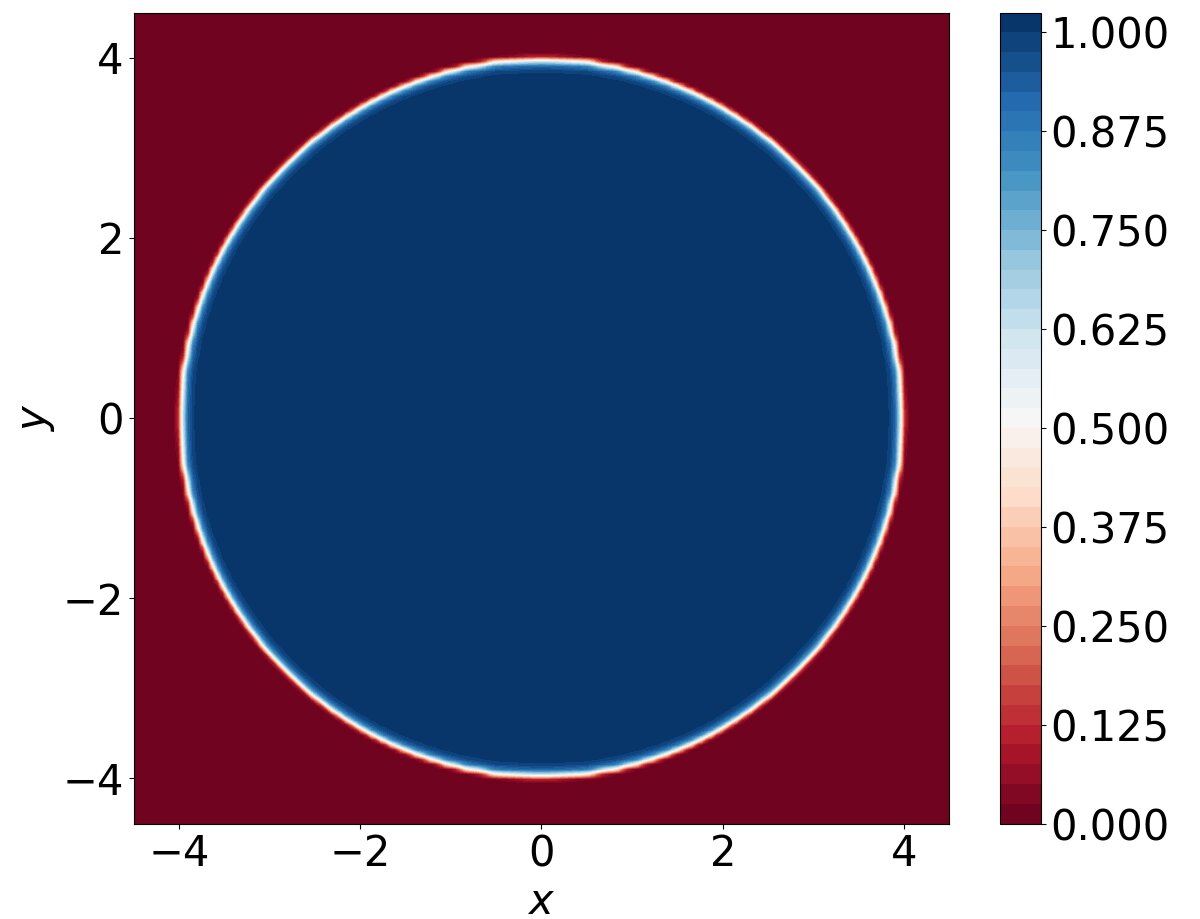}
		\subcaption{The sum of the squared moduli of both components.}
		\label{fig:2_espece_delta=1,02_12}
	\end{subfigure}
	
	\caption{The squared modulus of the first component (A)
          and the second component (B) for a minimizer in the case of two components condensate
          with no rotation and $\delta_\varepsilon=1.02$.
          Their sum is displayed in (C).}
	\label{fig:2_espece_delta=1,02}
\end{figure}

\subsection{Two components condensate with rotation ($\Omega\neq0$) in the segregation
  regime}
\label{subsec:twocompseg}

In this section, we consider physical parameters related to the theoretical results
described in the second part of Section \ref{subsubsec:twocomponent}, that deal with two components
Bose--Einstein condensates, in the segregation regime ($\delta>1$), {\it with} rotation.

\subsubsection{Common parameters}

In this section, we use the following parameters.
The function $\rho$ is the same as in \eqref{eq:defrho}.
The physical parameters are $\varepsilon=10^{-2}$,
$\delta_\varepsilon=1+\varepsilon=1.01$ (so that $\tilde{\varepsilon}=\sqrt{\varepsilon}=0.1$),
$N_1=0.55$ and $N_2=0.45$.
The initial data are given by $\psi^1=\psi^2=\exp({-10x^2-10y^2})/5$.
The discretization parameters are $L=7$, $R=4$, $h=0.1$ and $h_0=10^{-12}$.
All the numerical results in this Section \ref{subsec:twocompseg} are obtained by first
minimizing the discrete energy with $N=256$, then interpolating the real and imaginary parts
of each component to a grid of $N=512$ points in each direction, then minimizing the corresponding
discrete energy.
For all the experiments in this section, the EPG algorithm of Section \ref{subsec:gradient}
converged because of the stopping criterion on $K^\Delta$, with a value less or equal to
$2.0\times 10^{-2}$.
		
These parameters correspond to the segregation regime ($\delta$ greater yet close to $1$)
and $\varepsilon << \tilde{\varepsilon}$.
We consider the cases $\Omega\in\{1, 3, 6, 15\}$.

\subsubsection{Low rotation case: $\Omega=1$}

We first consider the case of low rotation and strong confinement.
The results are shown in Figure \ref{fig:2_espece_om_1_seg}.
The numerical experiment confirms that we are in a segregation regime (since $\delta>1$)
and the two components of the minimizer tend to {\it not} overlap.
Moreover, the small rotation speed ($\Omega=1$) is not big enough to produce vortices
in the minimizer.
This is in accordance with the theory presented in section \ref{subsubsec:twocomponent} (first bullet point in the rotational case).

	\begin{figure}[ht]
          \centering
	\begin{subfigure}[t]{0.42\textwidth}
		\centering
		\includegraphics[width=.8\textwidth]{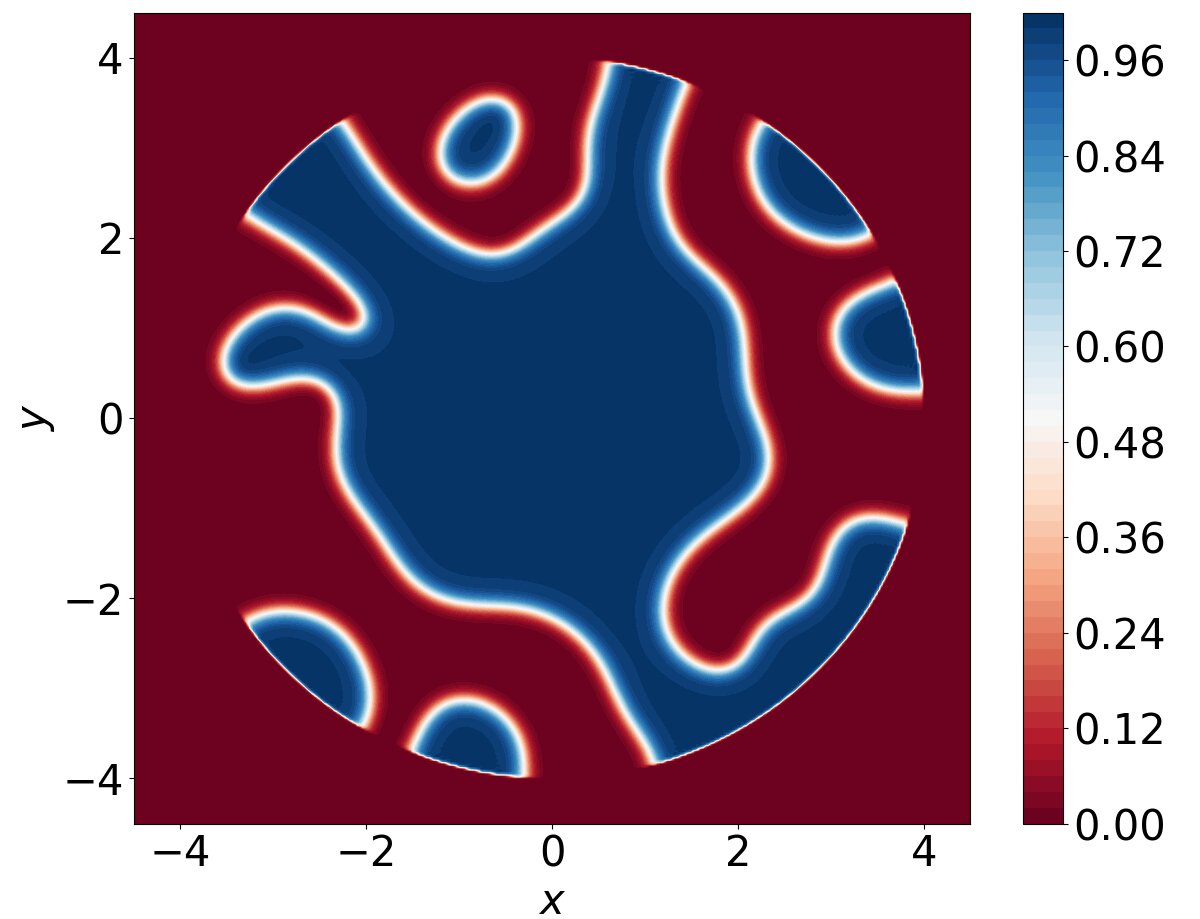}
		\subcaption{The squared modulus of the first component.}
		\label{fig:2_espece_om_1_1_seg}
	\end{subfigure}
	\hfill
	\begin{subfigure}[t]{0.42\textwidth}
		\centering
		\includegraphics[width=.8\textwidth]{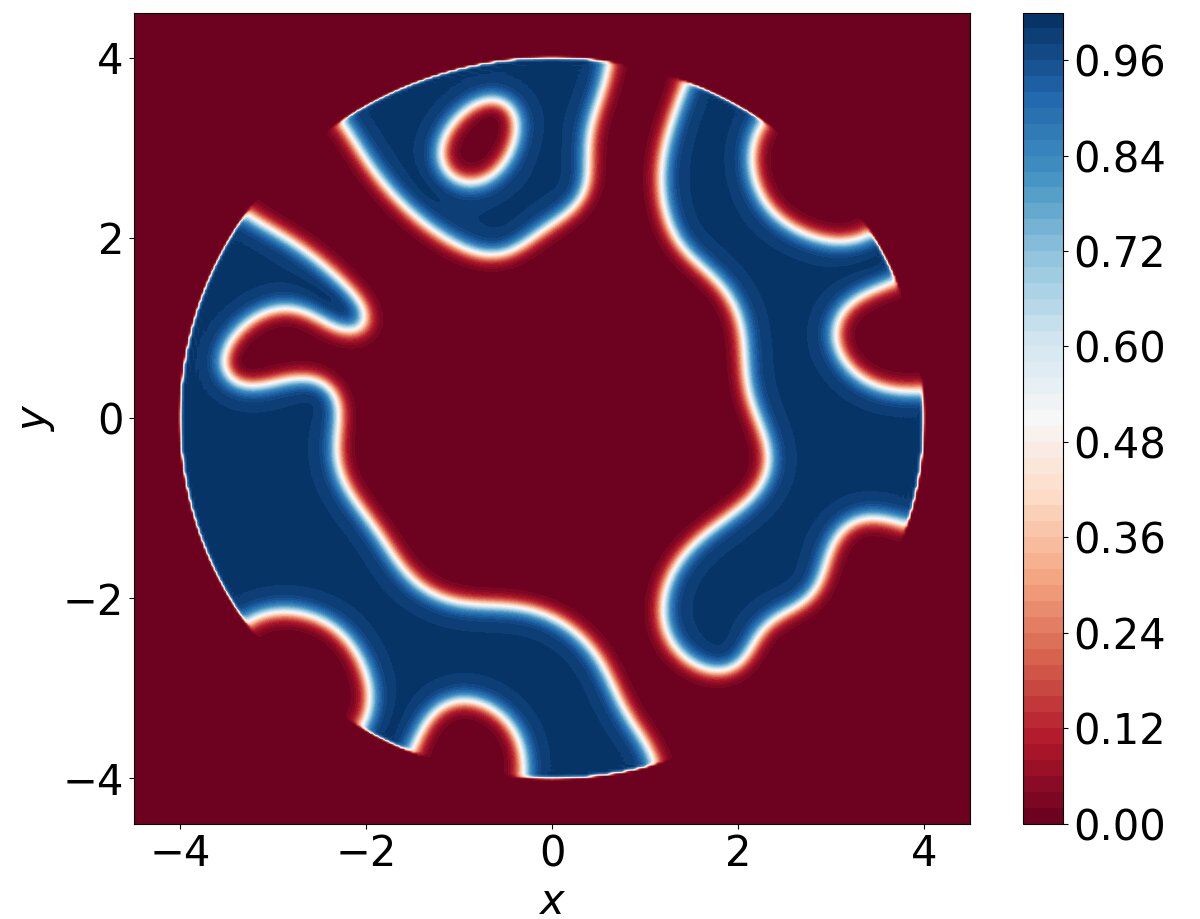}
		\subcaption{The squared modulus of the second component.}
		\label{fig:2_espece_om_1_2_seg}
        \end{subfigure}
	
	\caption{The squared modulus of the first component (A) and the second component (B)
          for a minimizer in the case of two components condensate with low rotation $\Omega=1$
          in the segregation regime.}
	\label{fig:2_espece_om_1_seg}
\end{figure}

\subsubsection{Moderate rotation case: $\Omega=3$}
We consider the case of moderate rotation speed and strong confinement.
The results are shown in Figure \ref{fig:2_espece_om_3_seg}.
The numerical experiment shows that segregation holds, and the moderate rotation speed
is big enough to produce singly quantized vortices in one of the components of the numerical minimizer
(see Fig. \ref{fig:2_espece_om_3_2_vortex}).
This is in accordance with the theory presented in section \ref{subsubsec:twocomponent}
(second bullet point of the rotational case).

	\begin{figure}[ht]
	\centering
	\begin{subfigure}[t]{0.3\textwidth}
		\centering
		\includegraphics[width=\textwidth]{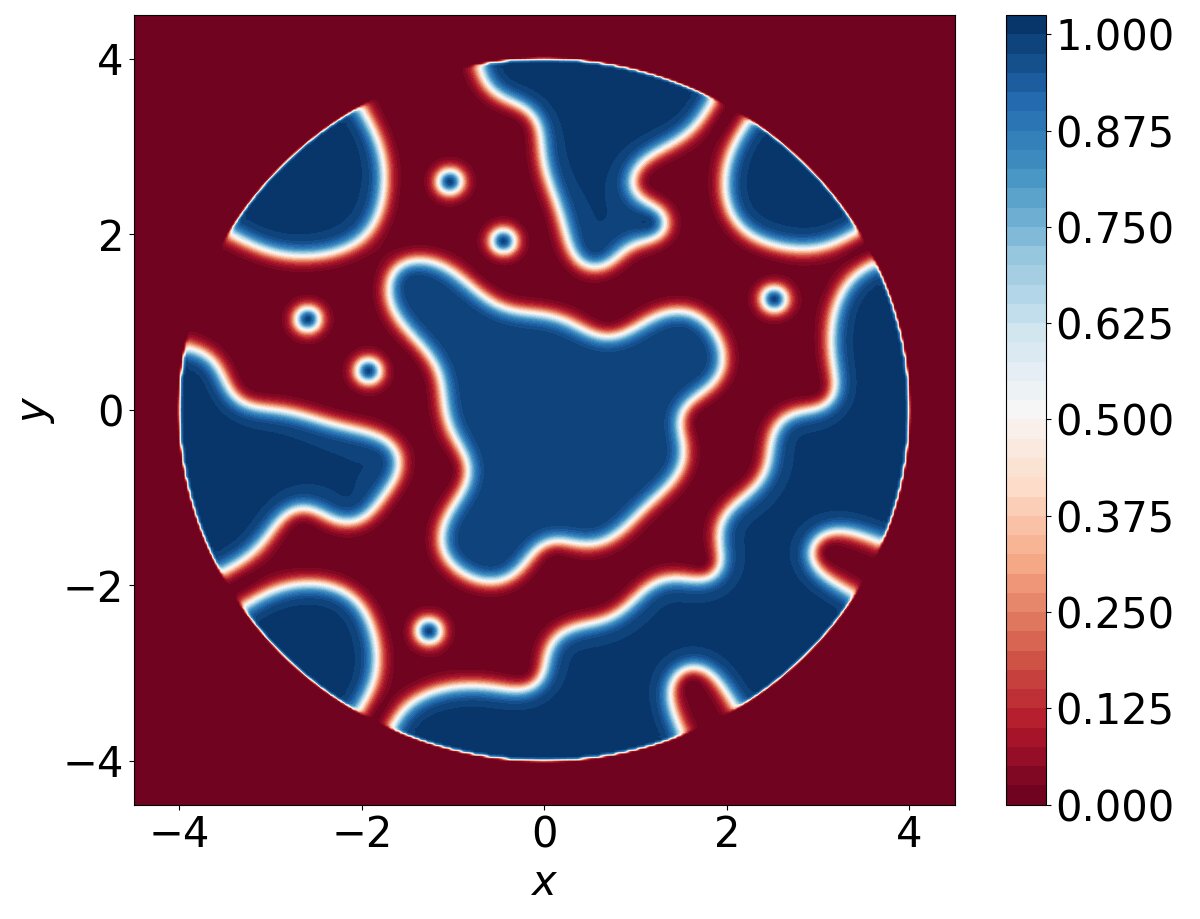}
		\subcaption{The squared modulus of the first component.}
		\label{fig:2_espece_om_3_1_seg}
	\end{subfigure}
	\hfill
	\begin{subfigure}[t]{0.3\textwidth}
		\centering
		\includegraphics[width=\textwidth]{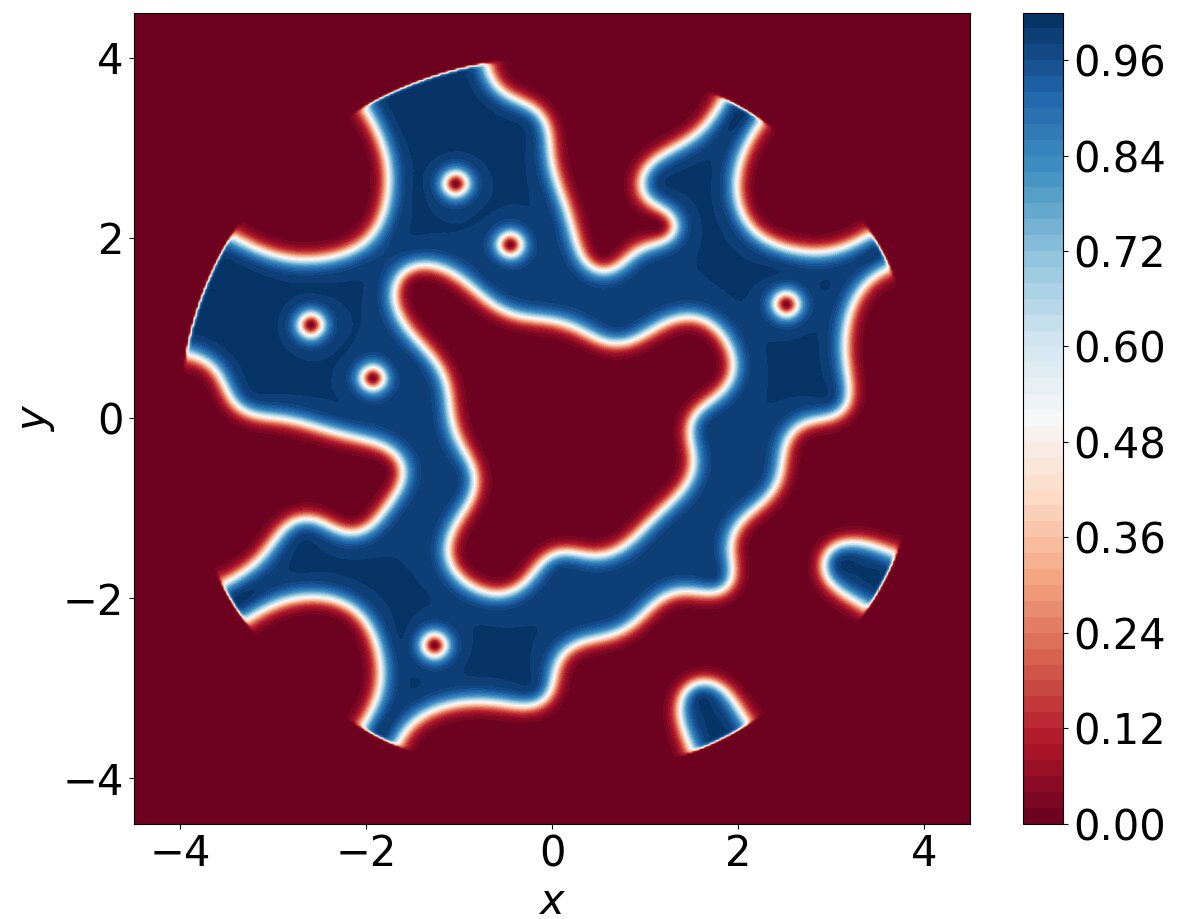}
		\subcaption{The squared modulus of the second component.}
		\label{fig:2_espece_om_3_2_seg}
	\end{subfigure}
	\hfill
	\begin{subfigure}[t]{0.3\textwidth}
		\centering
		\includegraphics[width=\textwidth]{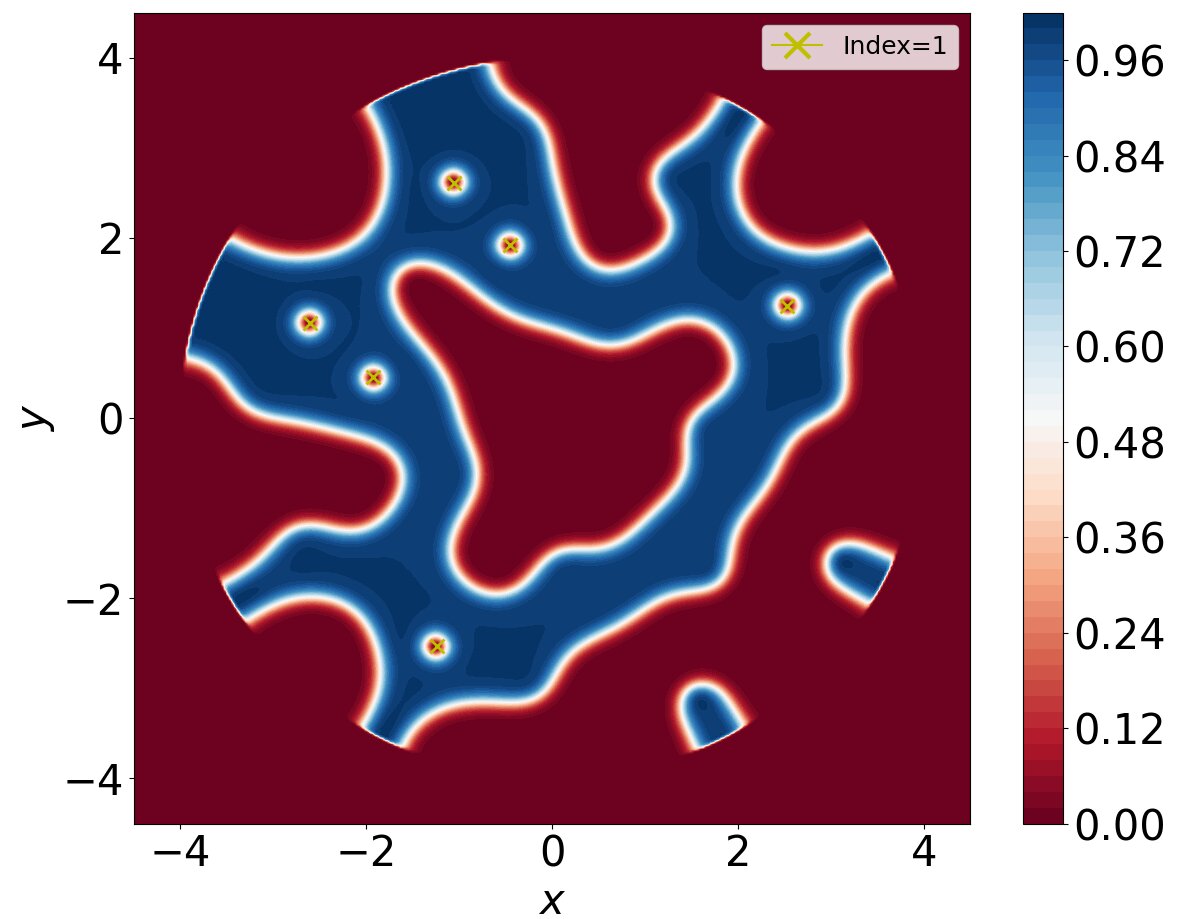}
		\subcaption{The vortices' indices of the second component
                  of a minimizer for the energy detected with
                  $N_{min}=1$, $N_{max}=5$, $tol_1=0.05$ and $tol_2=0.02$.}
		\label{fig:2_espece_om_3_2_vortex}
	\end{subfigure}
	\caption{The squared modulus of the first component (A) and the second component (B)
          of a minimizer in the case of two components condensate with moderate rotation
          $\Omega=3$.
          The vortices' indices of the second component are presented in Fig. (C).}
	\label{fig:2_espece_om_3_seg}
\end{figure}  

\subsubsection{Average rotation case: $\Omega=6$}
We consider the case of average rotation speed and strong confinement.
The results are shown in Figures \ref{fig:2_espece_om_6_seg}.
Numerically, the experiment yields vortices in both components of the numerical minimizer,
since the rotation is more important than in the two previous experiments.
Moreover, the computation of the indices of the vortices in both components of the minimizer,
using the algorithm described in Section \ref{subsec:index}, shows that the vortices
are singly quantized
(Fig. \ref{fig:2_espece_om_6_1_seg_vortex} and \ref{fig:2_espece_om_6_2_seg_vortex}).
This is in accordance with the theory presented in section \ref{subsubsec:twocomponent}
(second bullet point of the rotational case).

\begin{figure}[ht]
		\centering
		\begin{subfigure}[t]{0.45\textwidth}
			\centering
			\includegraphics[width=.8\textwidth]{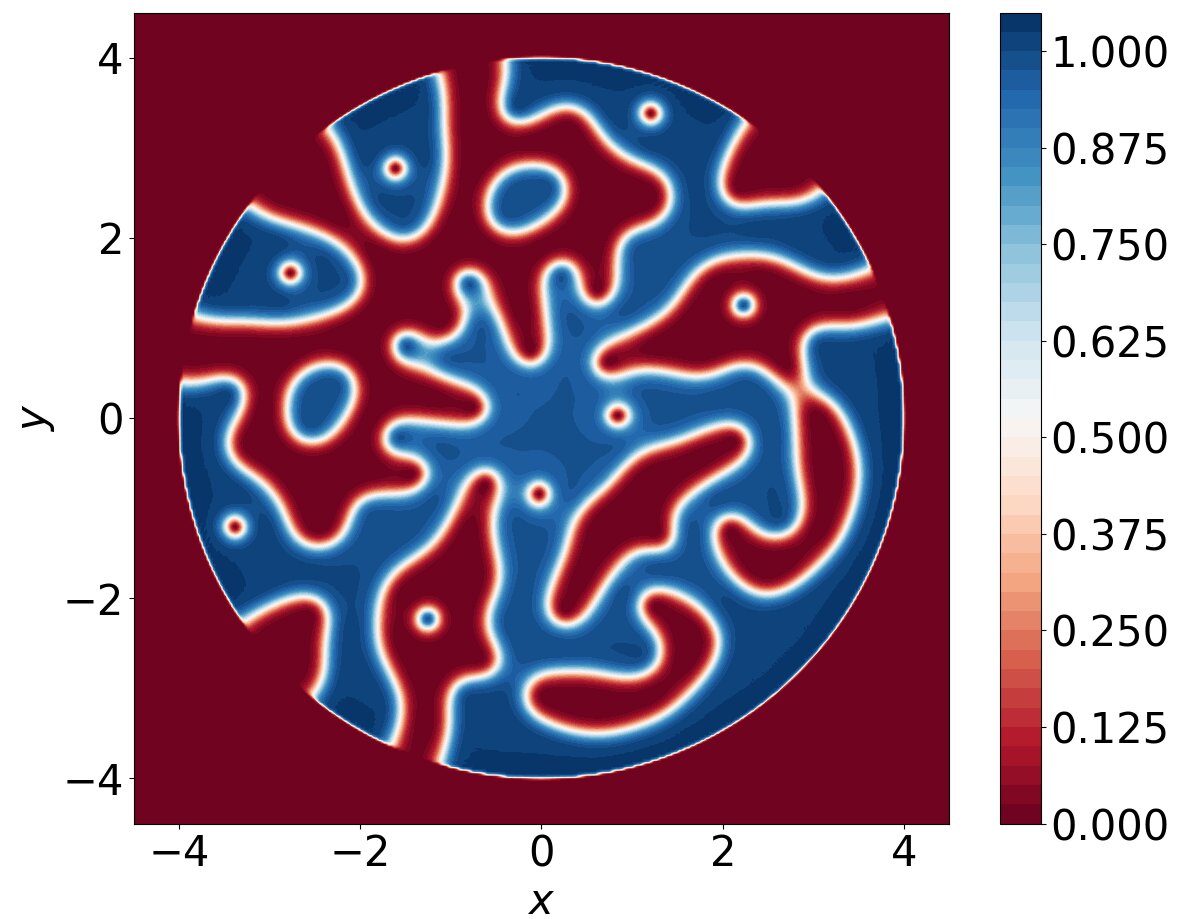}
			\subcaption{The squared modulus of the first component.}
			\label{fig:2_espece_om_6_1_seg}
		\end{subfigure}
		\hfill
		\begin{subfigure}[t]{0.45\textwidth}
			\centering
			\includegraphics[width=.8\textwidth]{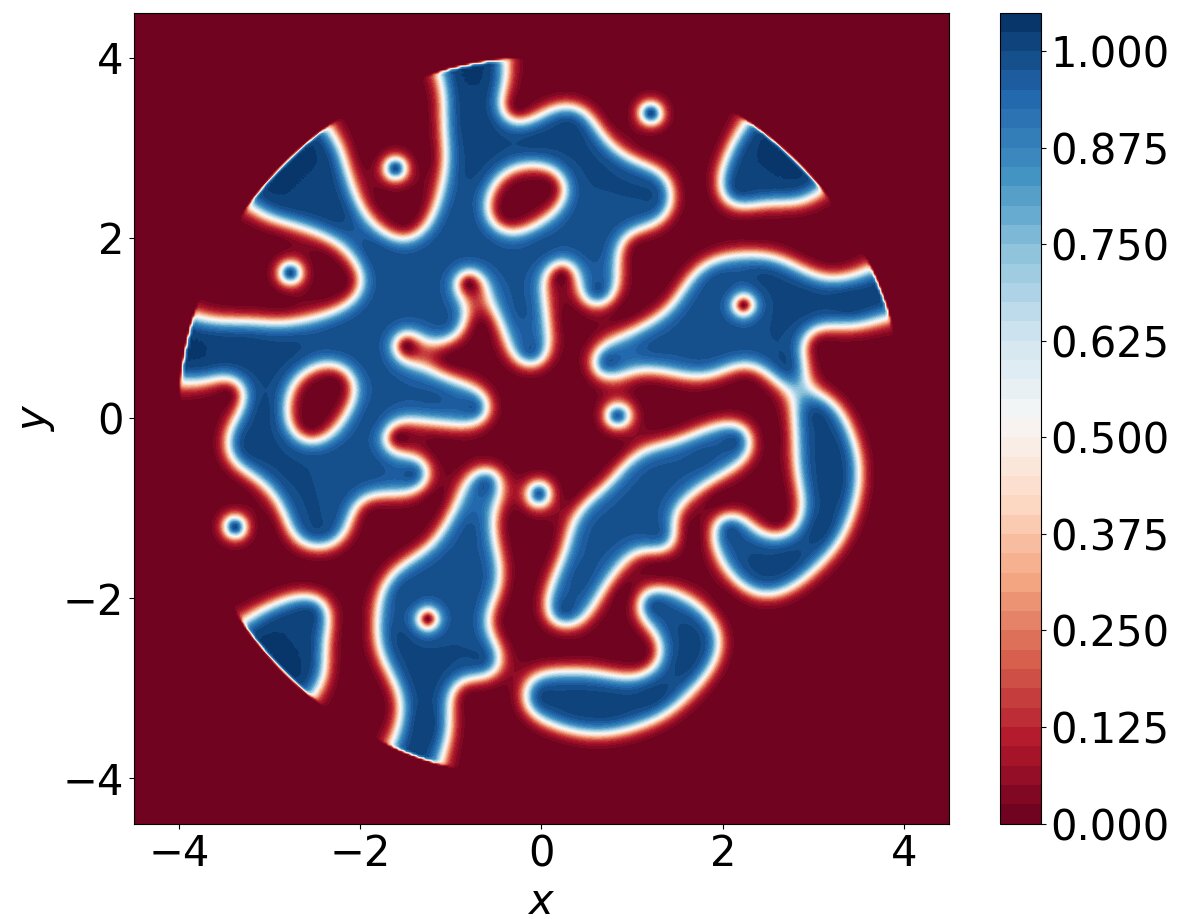}
			\subcaption{The squared modulus of the second component.}
			\label{fig:2_espece_om_6_2_seg}
		\end{subfigure}
		\hfill
        \vskip\baselineskip
		\begin{subfigure}[t]{0.45\textwidth}
			\centering
			\includegraphics[width=.8\textwidth]{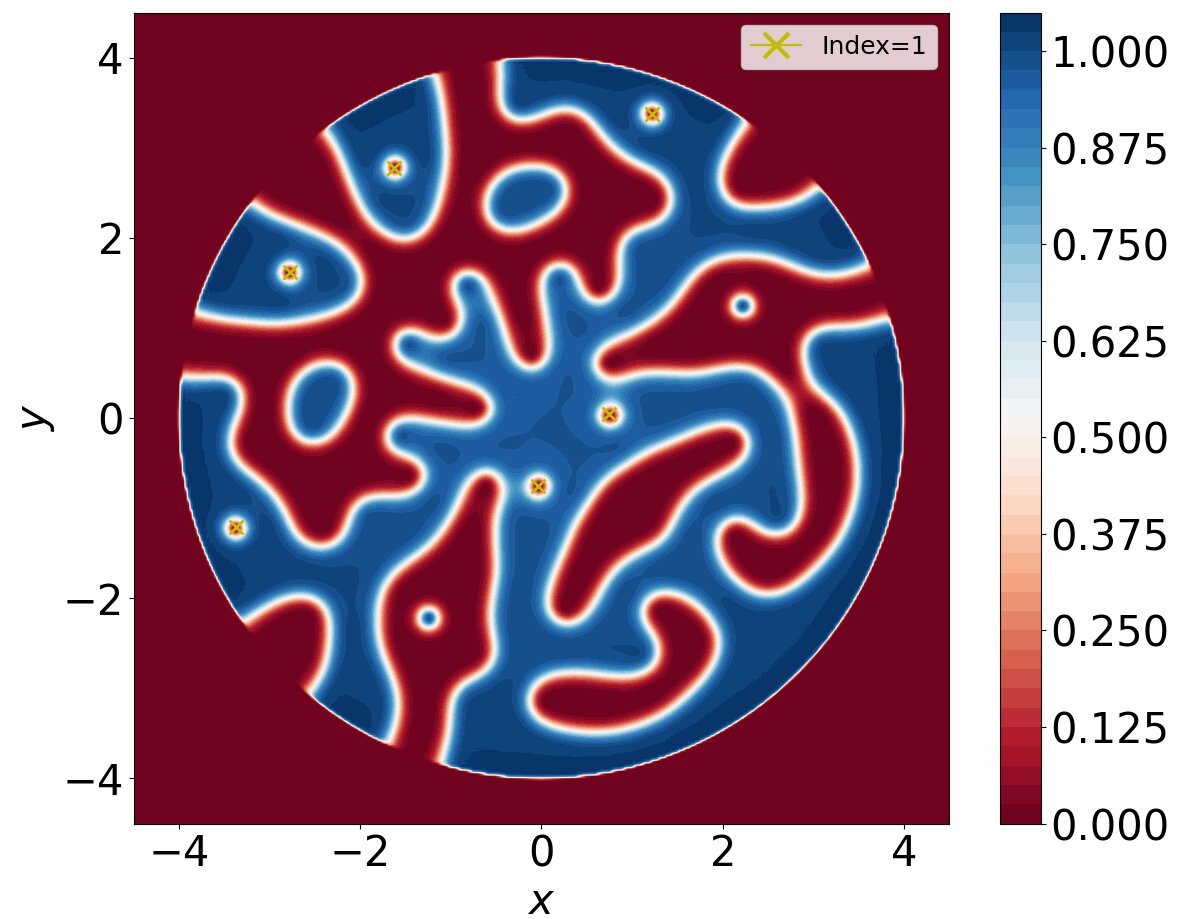}
			\subcaption{The vortices' indices of the first component.}
			\label{fig:2_espece_om_6_1_seg_vortex}
		\end{subfigure}
		\hfill
		\begin{subfigure}[t]{0.45\textwidth}
			\centering
			\includegraphics[width=.8\textwidth]{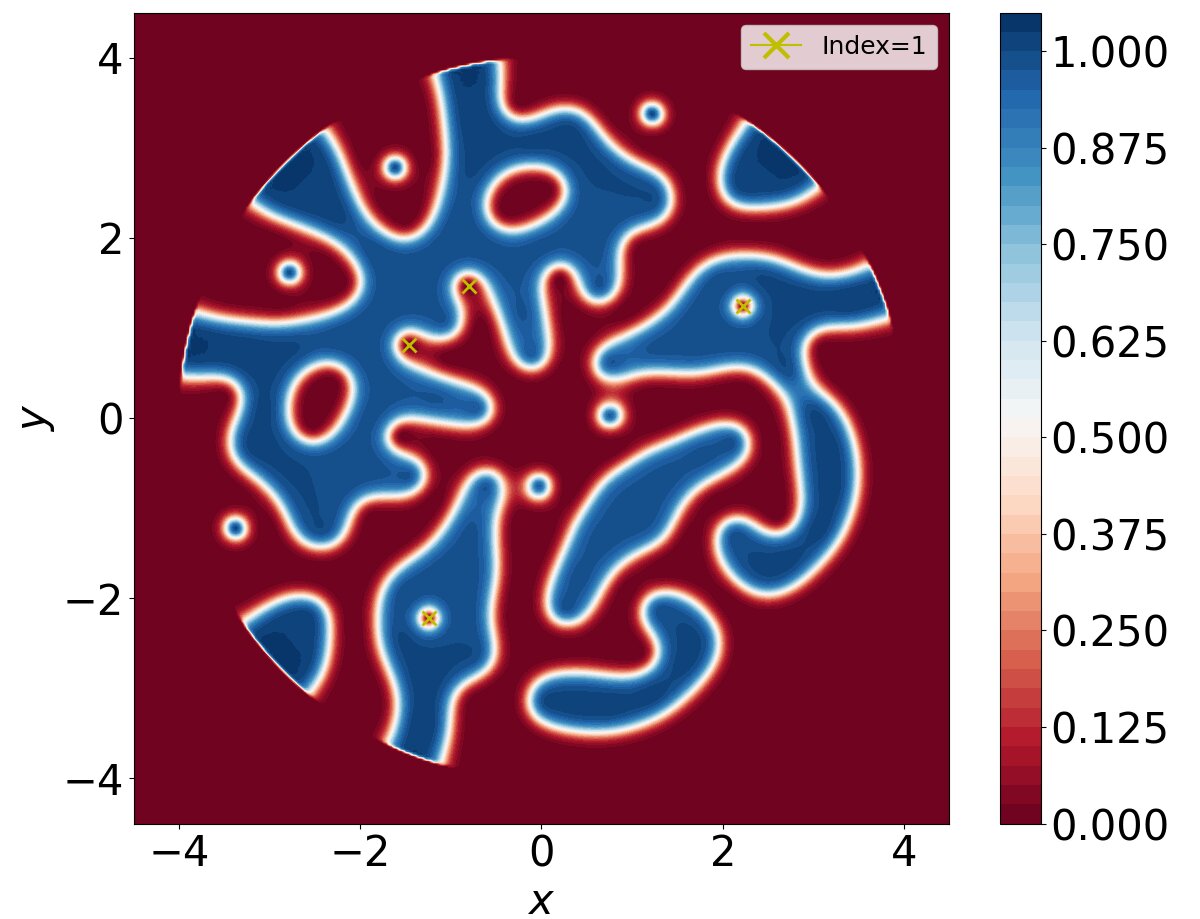}
			\subcaption{The vortices' indices of the second component.}
			\label{fig:2_espece_om_6_2_seg_vortex}
		\end{subfigure}
	\caption{The squared modulus of the first component (A) and the second component (B)
          of a minimizer in the case of two components condensate with moderate rotation $\Omega=6$.
          The vortices' indices of the first and second component are presented in Fig. (C) and (D)
          (respectively) with $N_{min}=1$, $N_{max}=5$, $tol_1=0.05$ and $tol_2=0.02$.}
        \label{fig:2_espece_om_6_seg}
\end{figure}

\subsubsection{High rotation case: $\Omega=15$}
We consider the case of high rotation velocity and strong confinement.
The results are shown in Figures \ref{fig:2_espece_om_15_seg}.
The numerical experiment shows that we still are in a segregation regime, since the supports
of the minimizers tend to {\it not} overlap.
Moreover, the rotation speed is sufficiently big to observe numerically
the formation of vortex sheets in each component of the minimizer.
This is in accordance with the theory presented in section \ref{subsubsec:twocomponent}
(last bullet point of the rotational case).
The computation of the indices of the vortex sheets is carried out using the algorithm
described in Section \ref{subsec:sheet_index}, with $m=0.4$, $M=0.6$ and $tol_3=0.3$ in each
component of the numerical minimizer.
The computed indices of the vortex sheets are positive,
which validates numerically the existence of vortex sheets with a phase circulation
when $\delta_{\varepsilon} \to 1$ and $\tilde{\varepsilon} \to 0$,
which was conjectured in \cite{AftalionSandier2020} (see also Section \ref{subsubsec:twocomponent}).


\begin{figure}[ht]
	\centering
	\begin{subfigure}[t]{0.45\textwidth}
		\centering
		\includegraphics[width=.8\textwidth]{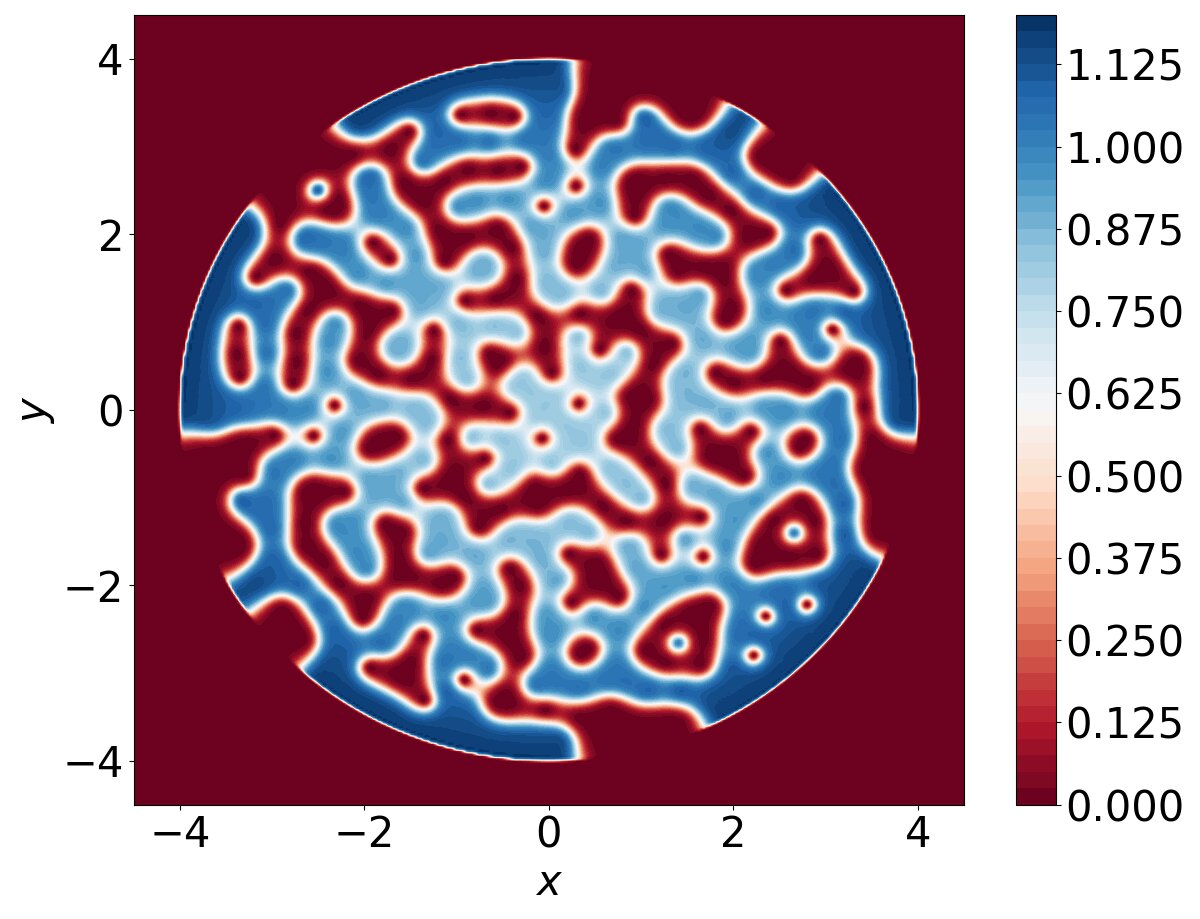}
		\subcaption{The squared modulus of the first component.}
		\label{fig:2_espece_om_15_seg_1}
	\end{subfigure}
	\hfill
	\begin{subfigure}[t]{0.45\textwidth}
		\centering
		\includegraphics[width=.8\textwidth]{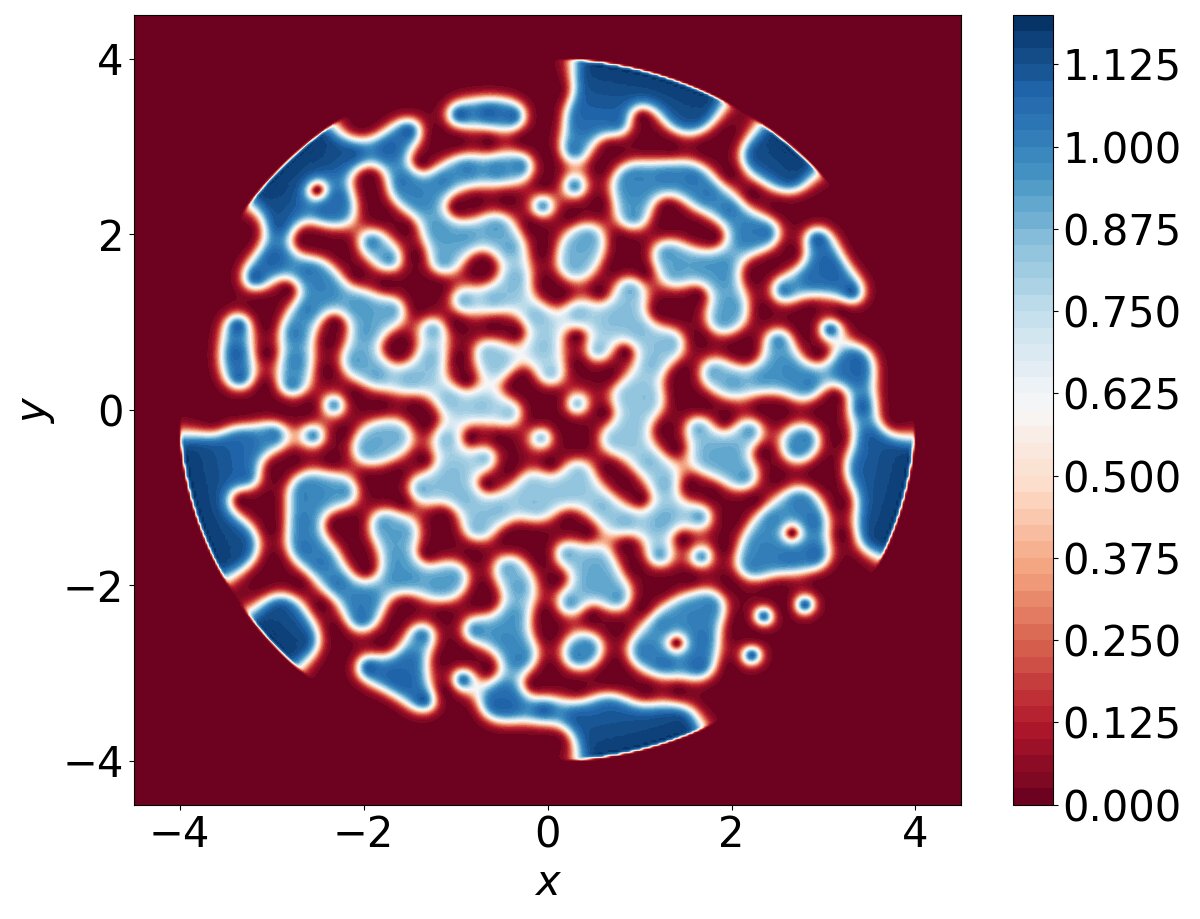}
		\subcaption{The squared modulus of the second component.}
		\label{fig:2_espece_om_15_seg_2}
	\end{subfigure}        
    \vskip\baselineskip
	\begin{subfigure}[t]{0.45\textwidth}
		\centering
		\includegraphics[width=.8\textwidth]{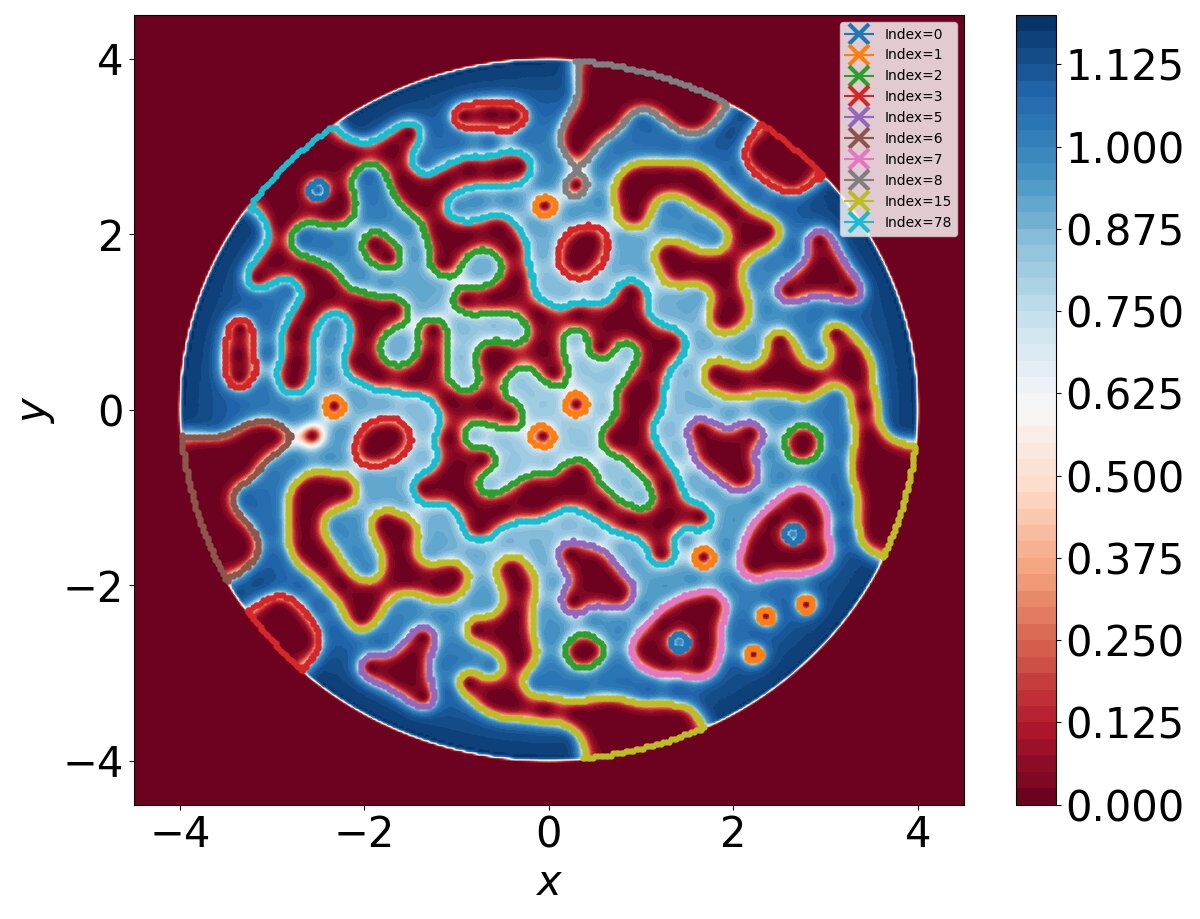}
		\subcaption{The vortex sheet's indices of the first component.}
		\label{fig:2_espece_om_15_seg_1_vortex}
	\end{subfigure}
	\hfill
	\begin{subfigure}[t]{0.45\textwidth}
		\centering
		\includegraphics[width=.8\textwidth]{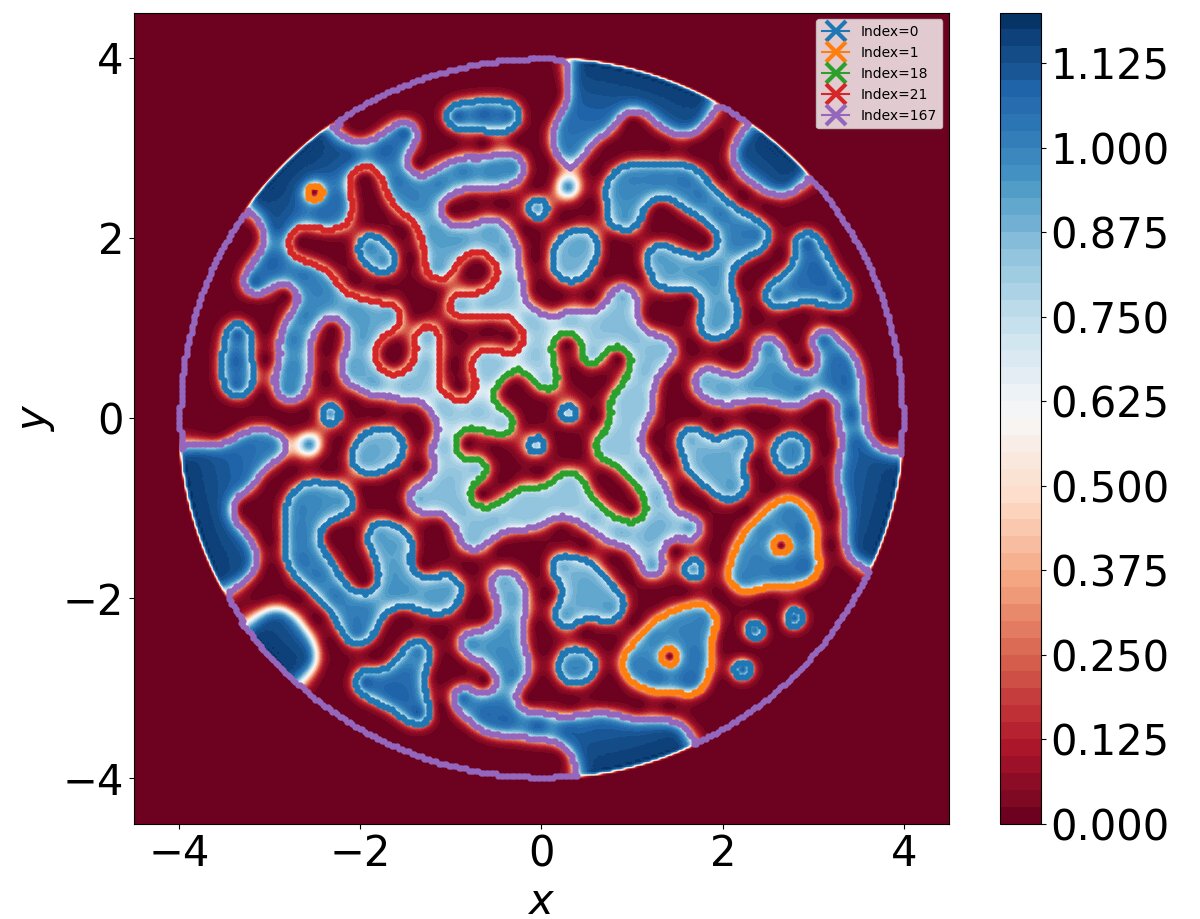}
		\subcaption{The vortex sheets' indices of the second component.}
		\label{fig:2_espece_om_15_seg_2_vortex}
	\end{subfigure}
	\caption{The squared modulus of the first component (A) and the second component (B)
          of a minimizer in the case of two components condensate with high rotation $\Omega=15$.
          The vortex sheets' indices of the first and second component are presented
          in Fig. (C) and (D) (respectively) with $m=0.4$, $M=0.6$ and $tol_3=0.3$.}
	\label{fig:2_espece_om_15_seg}
\end{figure}

\subsection{Two components condensate in the coexistence regime $\delta<1$}
\label{subsec:twocompcoex}

In this section, we consider the numerical behavior of two components Bose--Einstein condensates
in the coexistence regime ($\delta<1$) as $\varepsilon \to 0$.
As mentioned in \ref{subsubsec:twocomponent}, depending on the value of $\delta$ and $\Omega$,
we expect four different qualitative behaviours for the minimizers.

\subsubsection{Common parameters}
In this section, we use the following parameters.
The function $\rho$ is the same as in \eqref{eq:defrho}.
The physical parameters are $\varepsilon=5\times10^{-2}$, $N_1=0.55$ and $N_2=0.45$.
The discretization parameters are $L=7$, $R=4$, $h=0.1$ and $h_0=10^{-12}$.
All the numerical results in this Section \ref{subsec:twocompseg} are obtained by first
minimizing the discrete energy with $N=128$, then interpolating the real and imaginary parts
of each component to a grid of $N=256$ points in each direction, then minimizing the corresponding
discrete energy.
For all the experiments in this section, the EPG algorithm of Section \ref{subsec:gradient}
converged because of the stopping criterion on $K^\Delta$, with a value less or equal to $10^{-2}$.
These parameters correspond to the coexistence regime ($\delta < 1$).

\begin{remark}
  When minimizing $E^\Delta_{\varepsilon,\delta}$ with high rotation speed $\Omega$,
  giant holes are created in some of the next simulations.
  To circumvent this, from now on, we will refer to a discrete analogue of
  ${\mathcal E}_{\varepsilon,\delta}^\Omega$ (the energy {\it with} the centrifugal force) as
  $\mathcal{E}^\Delta_{\varepsilon,\delta}$.
  Of course, $E^\Delta_{\varepsilon,\delta}$ still denotes the discrete energy {\it without} the
  centrifugal force. See Remark \ref{rem:centrifuge}.
\end{remark}

\subsubsection{No vortices}
We first consider the case of no rotation ($\Omega=0$), strong confinement
and weak interaction strength $\delta=0.9$ ({\it i.e. $1-\delta=0.1$}).

We start with a symmetric numerical initial datum corresponding to that of Section
\ref{subsubsec:param_2species_norotation}.
The results are shown in Figure \ref{fig:coex_fft}.
The numerical experiment confirms that we are in a coexistence regime since
the two components of the minimizer are disk-shaped,
with almost constant moduli away from the boundary, with similar (hence overlapping) supports.
Of course, the values of the constant moduli for $\psi_1$ and $\psi_2$ depend
on the values of $N_1$ and $N_2$.
This is in accordance with the theory presented in Section \ref{subsubsec:twocomponentcoex}.

\begin{figure}[ht]
	\centering
	\begin{subfigure}[t]{0.45\textwidth}
		\centering
		\includegraphics[width=.8\textwidth]{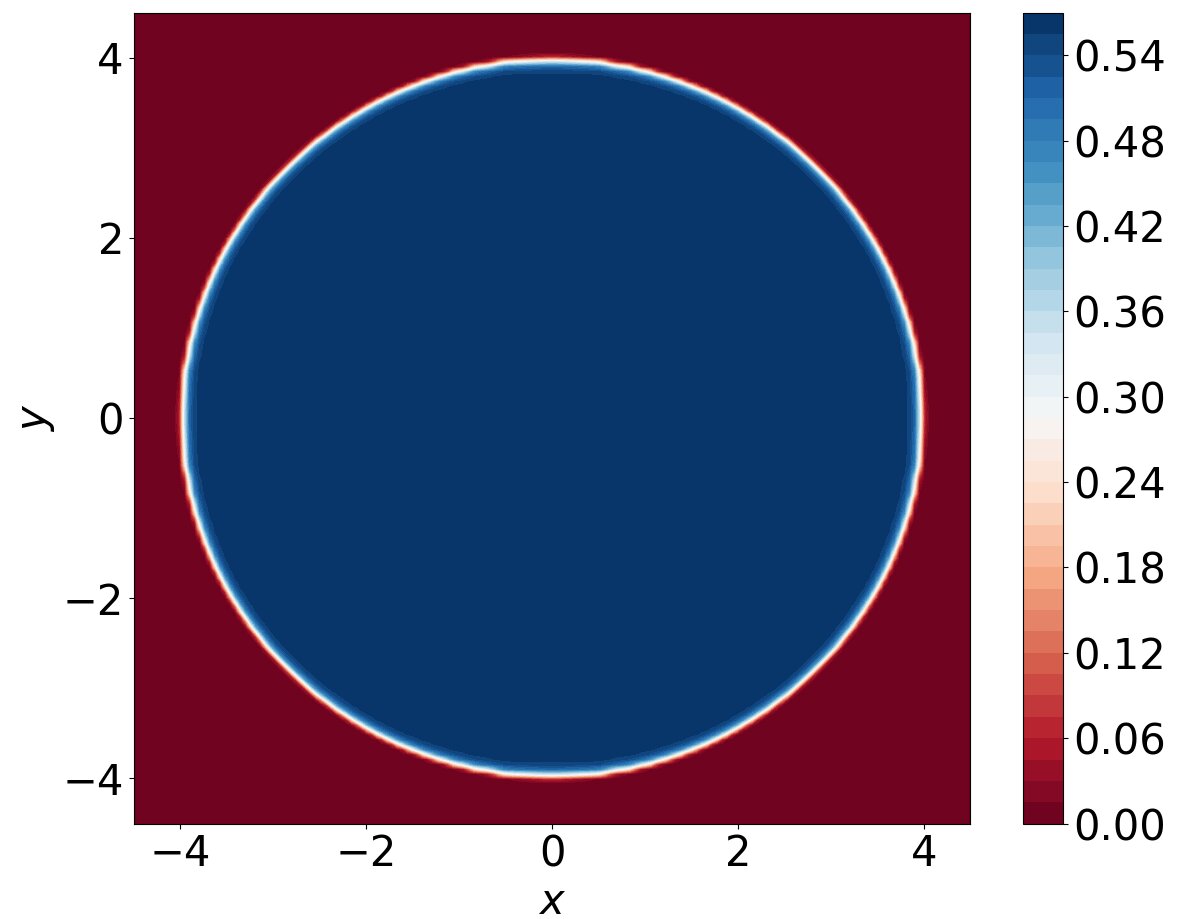}
		\subcaption{The squared modulus of the first component.}
		\label{fig:coex_fft_1}
	\end{subfigure}
	\hfill
	\begin{subfigure}[t]{0.45\textwidth}
		\centering
		\includegraphics[width=.8\textwidth]{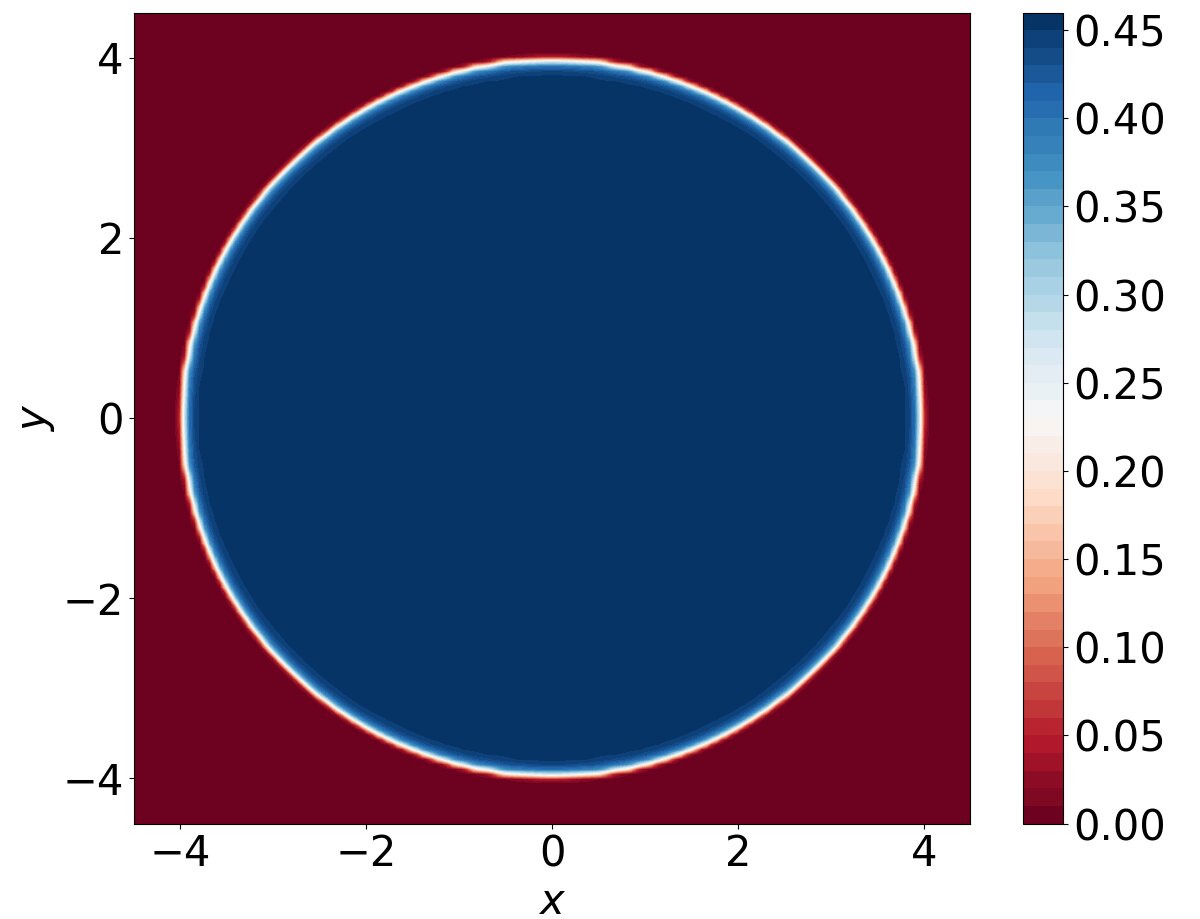}
		\subcaption{The squared modulus of the second component.}
		\label{fig:coex_fft_2}
	\end{subfigure}
	\caption{The squared modulus of the first component (A) and of the second component (B)
          of a minimizer of the energy $E_{\varepsilon,\delta}^\Delta$,
          in the case of no rotation ($\Omega=0$) and weak interaction strength ($\delta=0.9$).}
	\label{fig:coex_fft}
\end{figure}

\subsubsection{Triangular vortex lattices}
In this section, we consider the case of strong interaction strength ($\delta=0.2$,
{\it i.e. $1-\delta=0.8$})
and high rotation $(\Omega=7)$.
In order to break off the symmetry, we choose a decentered initial datum, given by 
\begin{eqnarray}
  \nonumber
\psi^1 (x,y)& = & \frac{1}{5} \exp({-10(x-0.5)^2-10(y+0.3)^2}), \\ \label{eq:donneedecentree}
\psi^2 (x,y) & = & \frac{1}{5} \exp({-10(x+0.7)^2-10(y-0.1)^2}).
\end{eqnarray}
The results are shown in Figure \ref{fig:triangular_lattice_fft}.
The numerical minimizer is consistent with the coexistence regime,
and the high rotation is big enough to produce singly quantized vortices
in both components forming a triangular vortex lattice.
This is in accordance with the second behaviour in the theory presented
in Section \ref{subsubsec:twocomponentcoex}.

\begin{figure}[ht]
	\centering
	\begin{subfigure}[t]{0.45\textwidth}
		\centering
		\includegraphics[width=.8\textwidth]{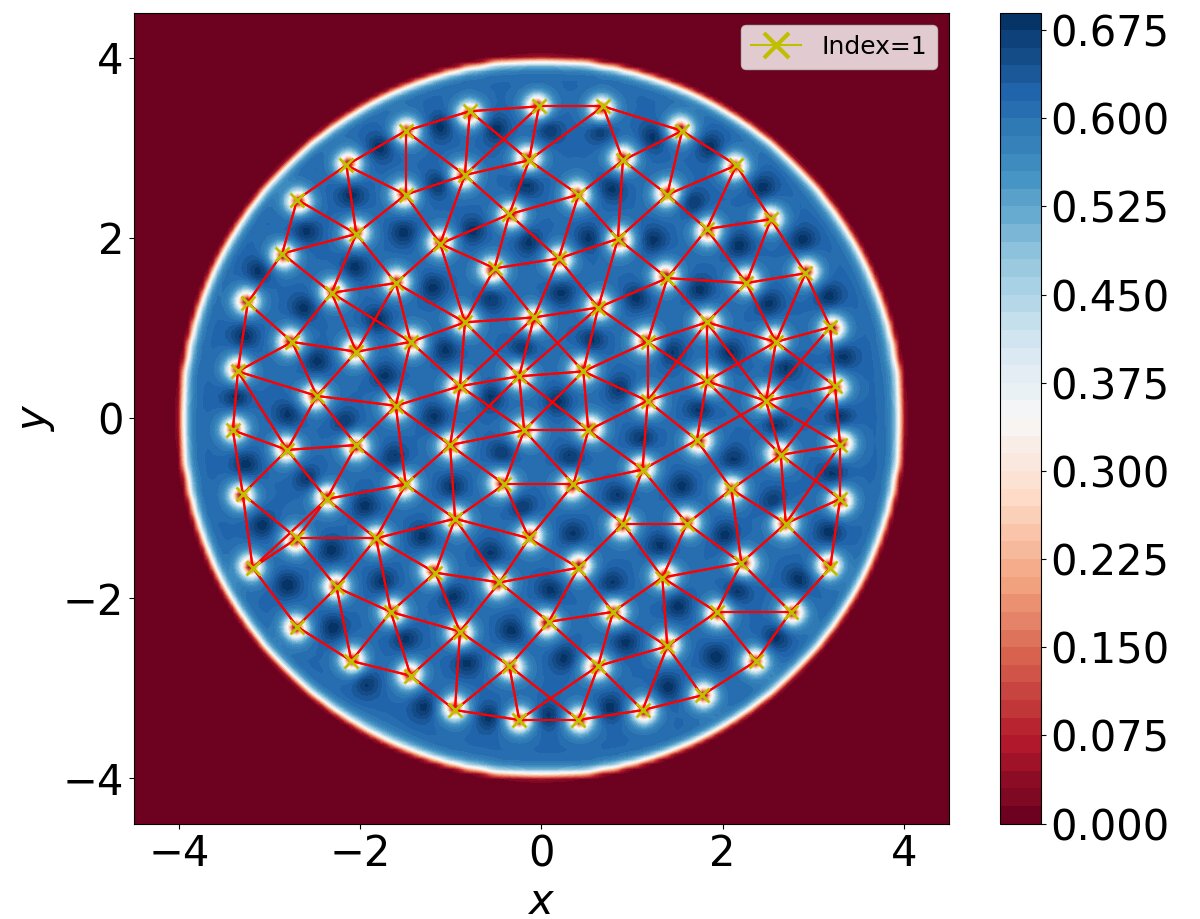}
		\subcaption{The vortices indexes of the first component.}
		\label{fig:triangular_lattice_fft_1}
	\end{subfigure}
	\hfill
	\begin{subfigure}[t]{0.45\textwidth}
		\centering
		\includegraphics[width=.8\textwidth]{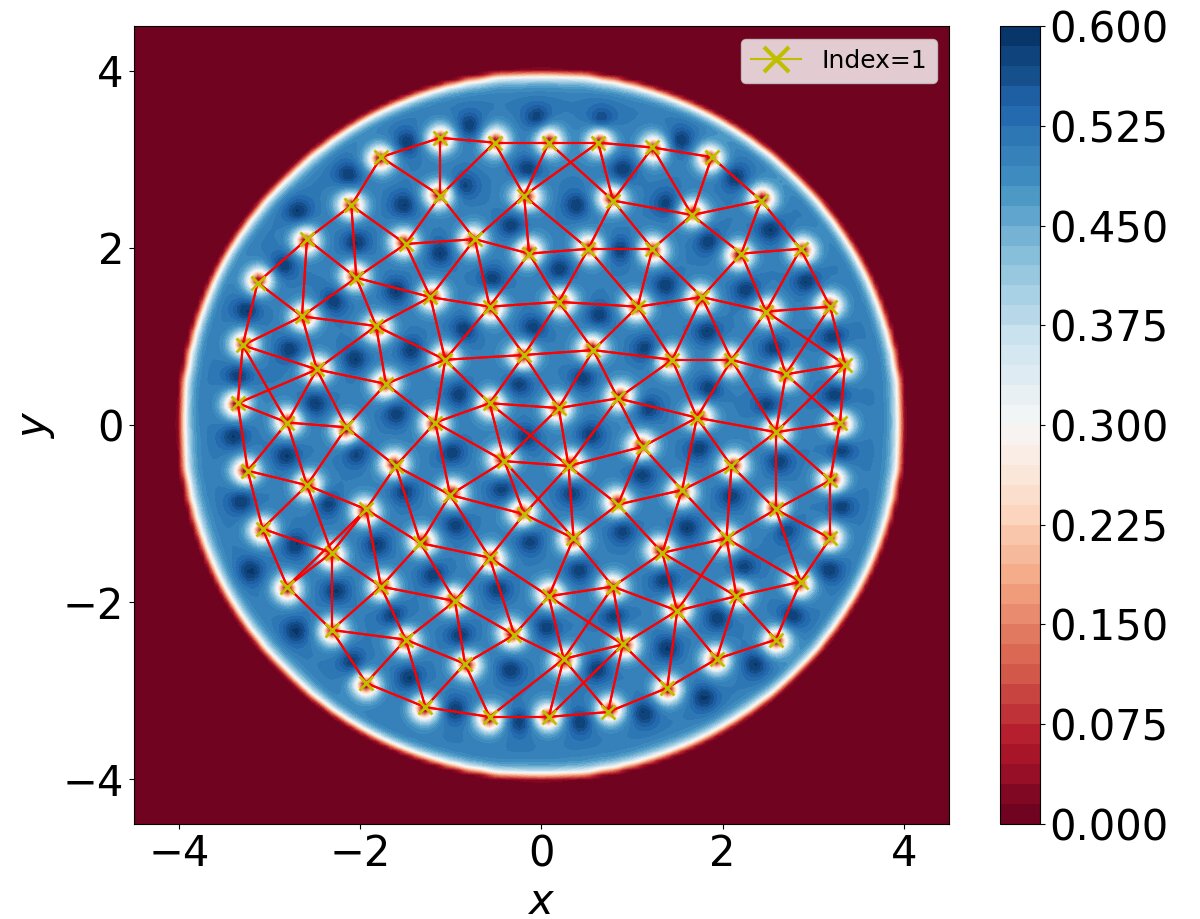}
		\subcaption{The vortices indexes of the second component.}
		\label{fig:triangular_lattice_fft_2}
	\end{subfigure}
	\caption{The vortices' indices of the first component (A) and second component (B) in the case of a two components condensate with high rotation ($\Omega=7$) and strong interaction strength ($\delta=0.2$) for the energy $\mathcal{E}_{\varepsilon,\delta}^\Delta$ followed by red lines highlighting the triangular lattices.}
	\label{fig:triangular_lattice_fft}
\end{figure}

\subsubsection{Square vortex lattices}
In this section, we consider the case of weak interaction strength ($\delta=0.8$,
{\it i.e. $1-\delta=0.2$})
and high rotation ($\Omega=8$).
We use the same non-symmetric initial data as before (see Equation \ref{eq:donneedecentree}).
The results are shown in Figure \ref{fig:square_lattice_fft}.
Numerically, the minimizer confirms that we are in a coexistence regime,
and the weak interaction strength with the high rotation produce a square vortex lattice
with singly quantized vortices in both components of the minimizer.
This is in accordance with the third behavior in the theory presented
in Section \ref{subsubsec:twocomponentcoex}.

\begin{figure}[ht]
	\centering
	\begin{subfigure}[t]{0.45\textwidth}
		\centering
		\includegraphics[width=.8\textwidth]{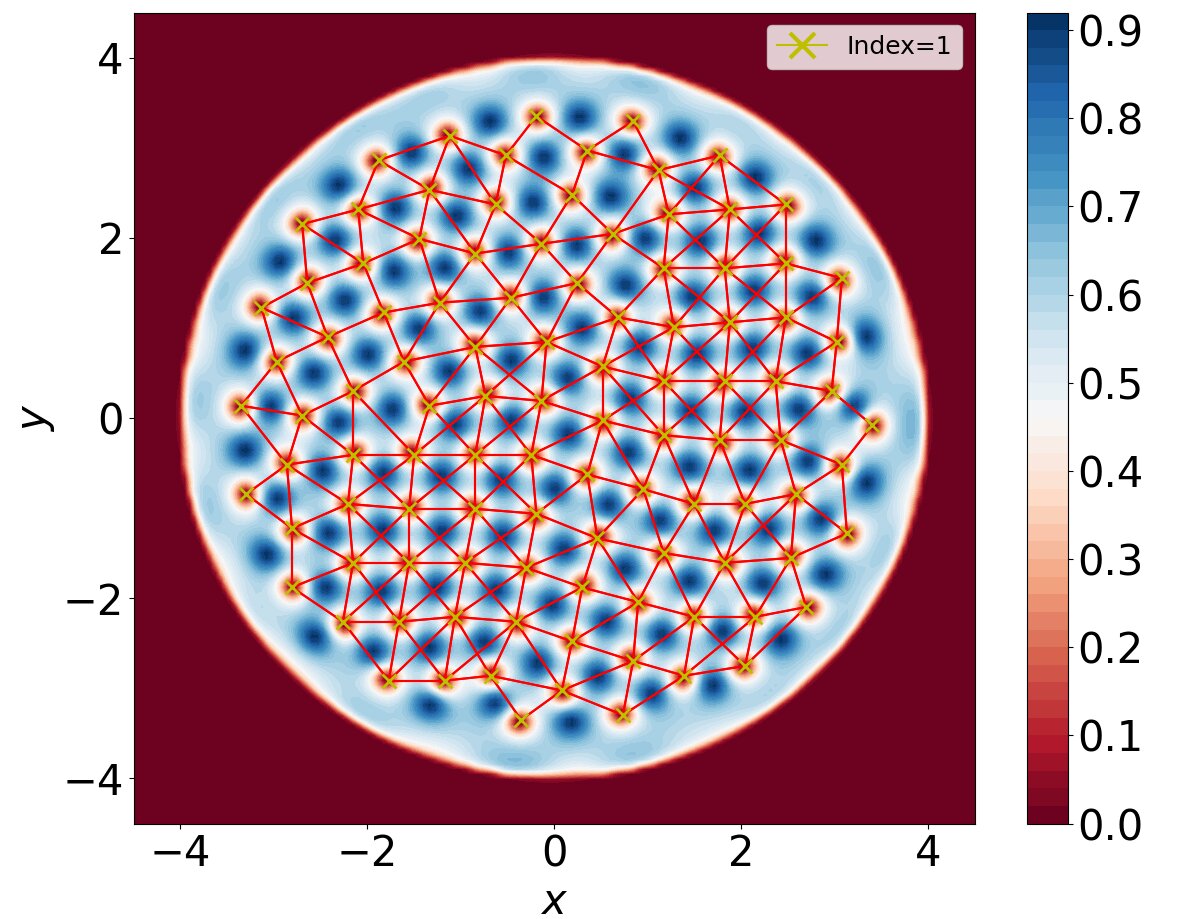}
		\subcaption{The vortices' indices of the first component.}
		\label{fig:square_lattice_fft_1}
	\end{subfigure}
	\hfill
	\begin{subfigure}[t]{0.45\textwidth}
		\centering
		\includegraphics[width=.8\textwidth]{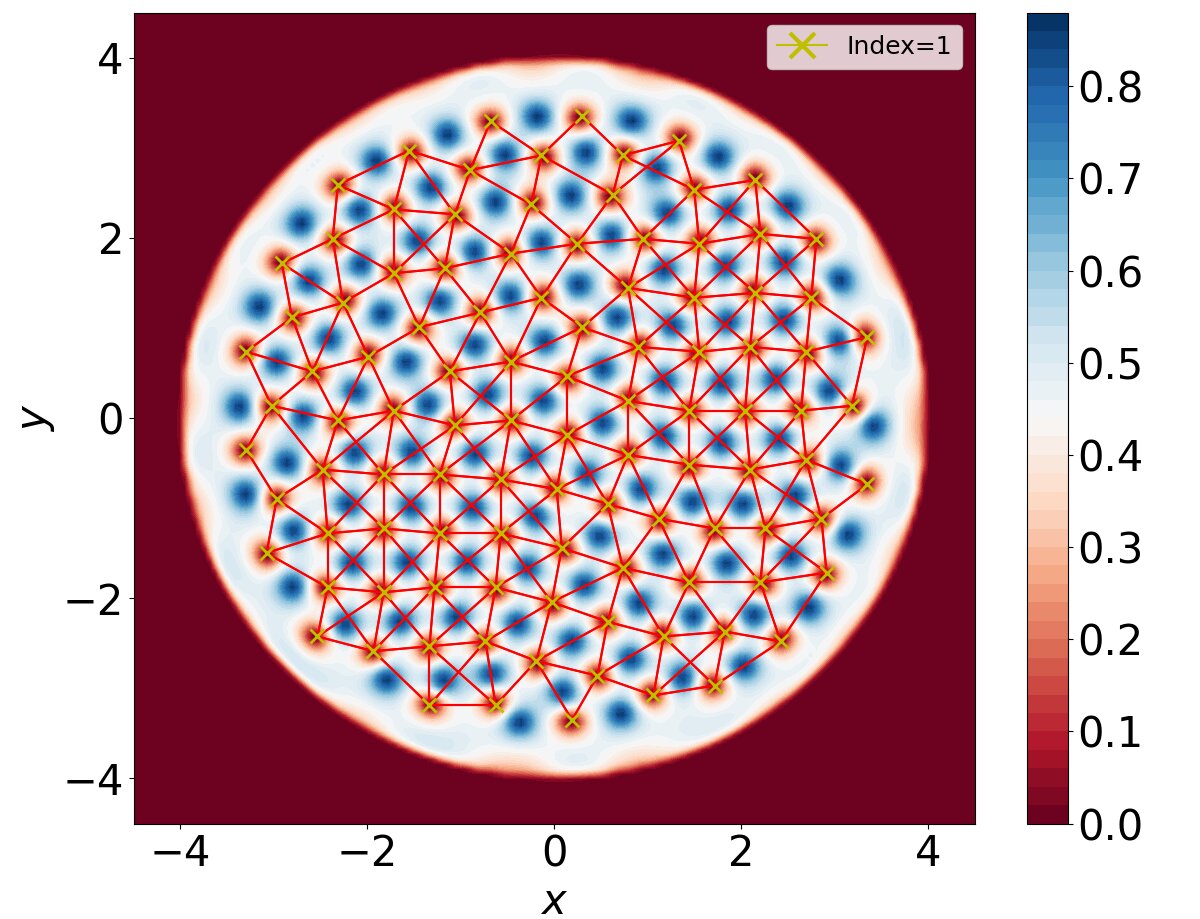}
		\subcaption{The vortices' indices of the second component.}
		\label{fig:square_lattice_fft_2}
	\end{subfigure}
	\caption{The vortices' indices of the first component (A) and second component (B)
          in the case of a two components condensate with high rotation ($\Omega=8$)
          and weak interaction strength ($\delta=0.8$)
          for the energy $\mathcal{E}_{\varepsilon,\delta}^\Delta$
          followed by red lines highlighting the square lattice.}
	\label{fig:square_lattice_fft}
\end{figure}

\subsubsection{Double core and Stripe vortex lattice}
In this last section, we fix $\delta$ close to $1$, and we hope to observe
either stripe vortex lattices or double core vortex lattices in the minimizers,
depending on the rotational speed $\Omega$.

In a first simulation, we use \eqref{eq:donneedecentree} as an initial datum as before.
We consider the case of fairly high rotation speed ($\Omega=4$) and weak interaction strength
($\delta=0.998$).
The numerical results obtained are displayed in Figure \ref{fig:double_core_vortex_cf}.
They confirm that we are in a coexistence regime where each component is disk-shaped.
Indeed, in both components, the points inside the disk of radius $R$
where the modulus of the minimizer is {\it very} small are isolated, and there is
at least a bit of mass in each component in all the rest of the disk.
This contrasts for example with all the numerical experiments carried out
in the segregation regime (see Section \ref{subsec:twocompseg}).

We compute the index of the detected vortices, and we display the results
in Figure \ref{fig:double_core_vortex_cf} as well.
As expected, almost all the vortices are paired up $2$ by $2$, forming double-core vortices.
Moreover, almost all the indices of the numerical vortices are equal to one, which validates
numerically that the zeros of the function have a non-trivial and singly quantized phase circulation.
The only exception is one vortex in the first component
(see Figure \ref{fig:double_core_cf_vortex_1}) which has an index of $2$.
An explanation for this is that it consists in two vortices that are too close to each other,
and hence cannot be detected as two different vortices by our vortex detection algorithm
(see Section \ref{subsec:index}).
This is in accordance with the last behaviour predicted by the theory presented
in Section \ref{subsec:twocompcoex}.

\begin{figure}[ht]
	\centering
	\begin{subfigure}[t]{0.45\textwidth}
		\centering
		\includegraphics[width=.8\textwidth]{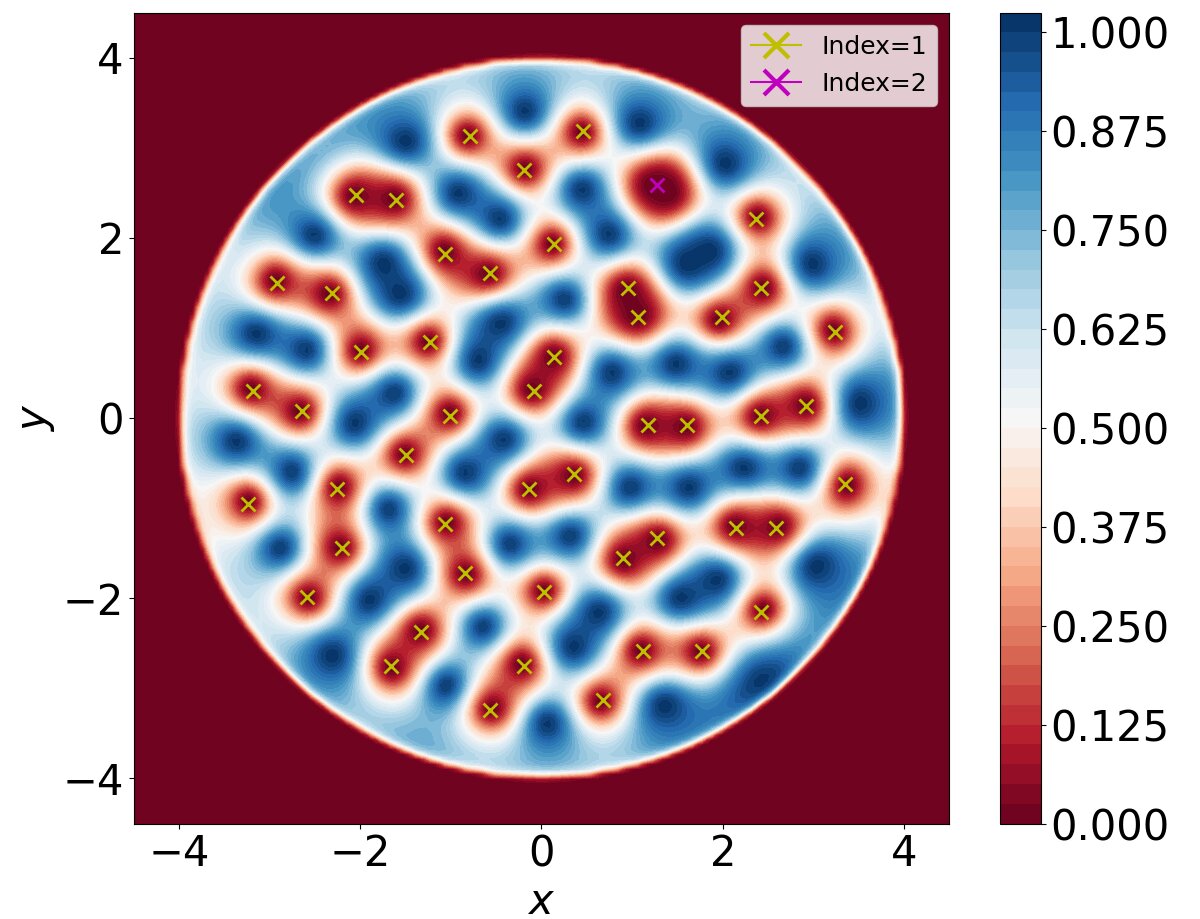}
		\subcaption{The squared modulus and the vortices' indices of the first component for $N_{min}=3$, $N_{max}=4$.}
		\label{fig:double_core_cf_vortex_1}
	\end{subfigure}
	\hfill
	\begin{subfigure}[t]{0.45\textwidth}
		\centering
		\includegraphics[width=.8\textwidth]{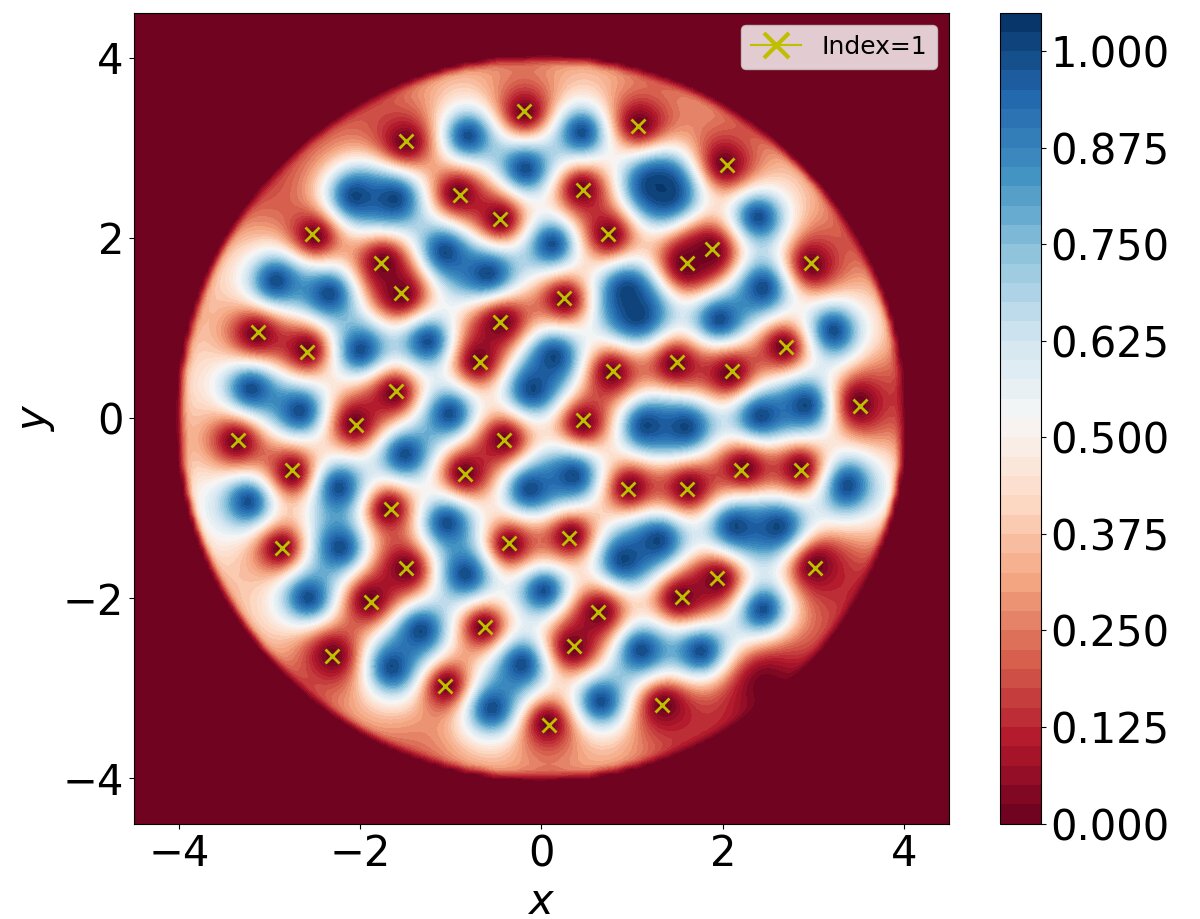}
		\subcaption{The squared modulus and the vortices' indices of the second component for $N_{min}=1$, $N_{max}=4$.}
		\label{fig:double_core_cf_vortex_2}
	\end{subfigure}
	\caption{The squared modulus of the first component (A) and the second component (B)
          of a minimizer (in the case of double-core pattern) for the energy
          $\mathcal{E}_{\varepsilon,\delta}^\Delta$ with $\Omega=4$ and $\delta=0.995$.
          The vortices' indices of each component are also presented in (A) and (B)
          (respectively) for $tol_1=0.05$ and $tol_2=0.01$.}
	\label{fig:double_core_vortex_cf}
\end{figure}

In a second simulation, we use the following initial datum
$\psi^1_{n,k} = \psi^2_{n,k} = \sin(x_{n,k}+y_{n,k})$,
still with the convention that $\psi^1$ and $\psi^2$ satisfy the Dirichlet boundary conditions.
We consider the case of fairly high rotation speed ($\Omega=5$)
and weak interaction strength ($\delta=0.98$).
The results are shown in Figure \ref{fig:stripe_cf_vortex}.

\begin{figure}[ht]
	\centering
	\begin{subfigure}[t]{0.45\textwidth}
		\centering
		\includegraphics[width=.8\textwidth]{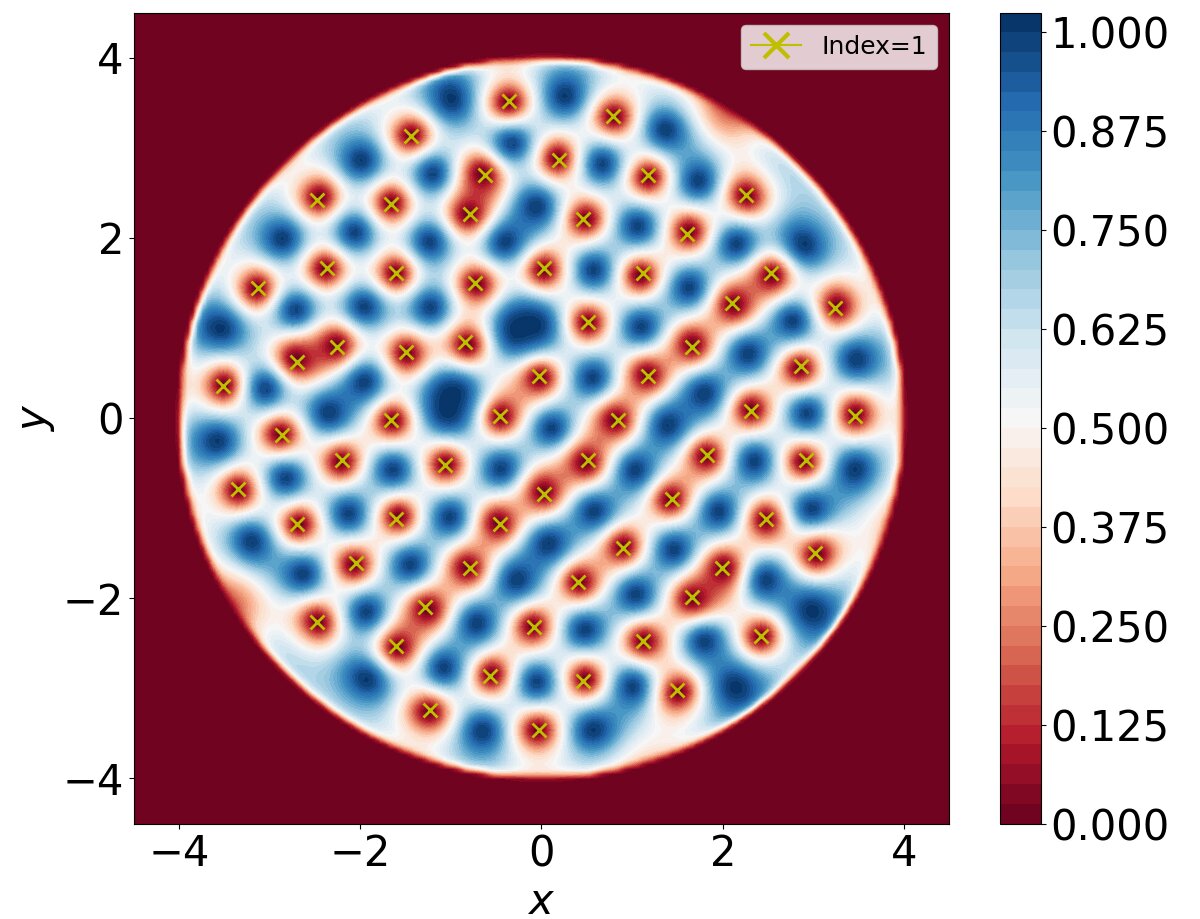}
		\subcaption{The squared modulus and the vortices' indices of the first component for $N_{min}=1$, $N_{max}=3$.}
		\label{fig:stripe_cf_1_vortex}
	\end{subfigure}
	\hfill
	\begin{subfigure}[t]{0.45\textwidth}
		\centering
		\includegraphics[width=.8\textwidth]{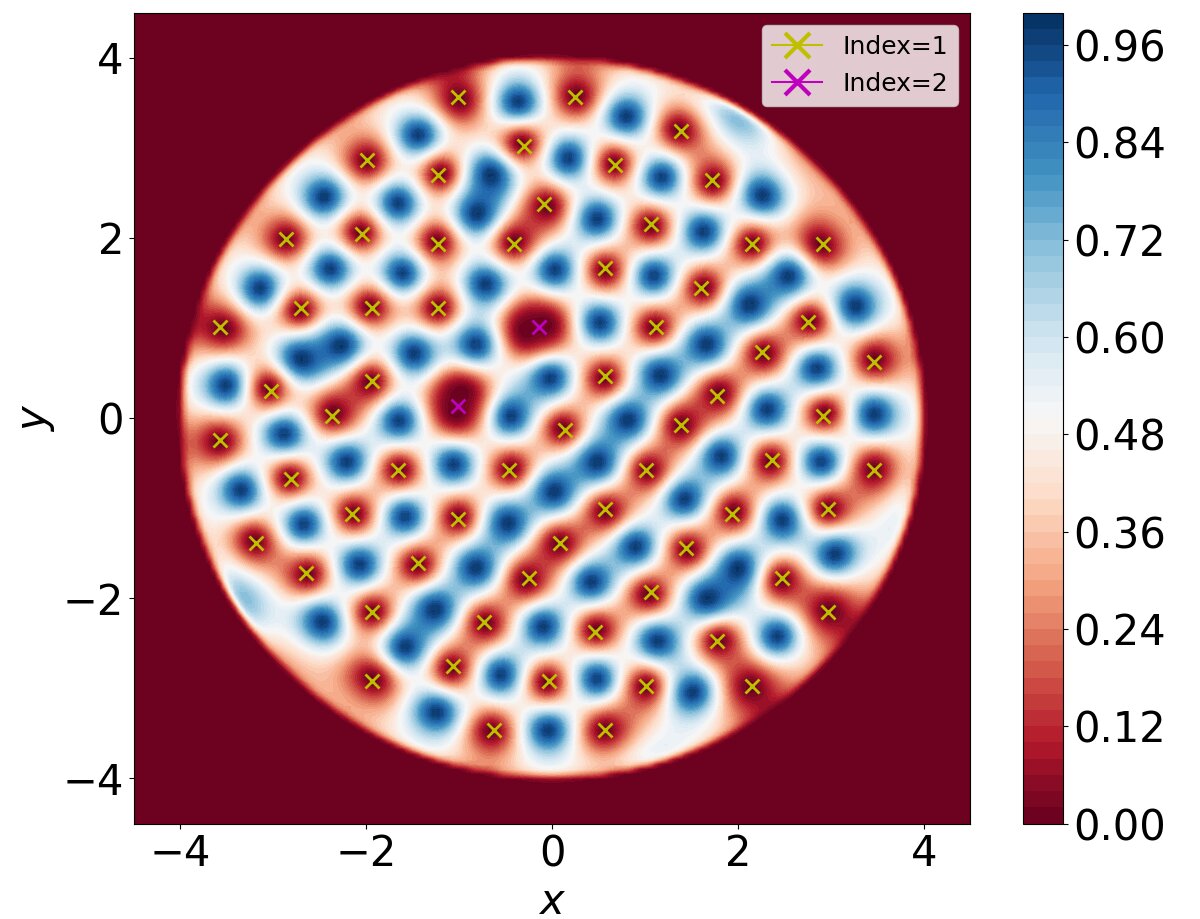}
		\subcaption{The squared modulus and the vortices' indices of the second component for $N_{min}=1$, $N_{max}=4$.}
		\label{fig:stripe_cf_2_vortex}
	\end{subfigure}
	\caption{The squared modulus of the first component (A) and the second component (B)
          of a minimizer (in the case of stripe pattern) for the energy
          $\mathcal{E}_{\varepsilon,\delta}^\Delta$ with $\Omega=5$ and $\delta=0.98$.
          The vortices' indices of each component are also presented in (A) and (B)
          (respectively) for $tol_1=0.05$ and $tol_2=0.01$.}
	\label{fig:stripe_cf_vortex}
\end{figure}

The numerical results confirm that we are in the coexistence regime,
where each component is disk-shaped.
In Figure \ref{fig:stripe_cf_vortex}, we also compute and display the index of the vortices detected.
As expected, almost all the indices of the numerical vortices are equal to $1$,
which validates numerically that the zeros of the function have a non-trivial and singly-quantized
phase circulation.
An explanation for the two exceptions (doubly-quantized vortices in the second component
displayed in Figure \ref{fig:stripe_cf_2_vortex}) is once again that our algorithm
simply fails to detect two separate vortices.
Moreover, we can see a stripe pattern in both components
(Figures \ref{fig:stripe_cf_1_vortex} and \ref{fig:stripe_cf_2_vortex}).
This is in accordance with the last behaviour in the theory presented in \ref{subsec:twocompcoex}.

\section{Comparison with GPELab}\label{sec:comparaisonGPE}
In this section, we study the efficiency of the EPG method
introduced in Section \ref{sec:discretmin} compared to that of GPELab's method (see \cite{GPE1,GPE2})
for minimizing the energy $E_{\varepsilon,\delta}^\Delta$.
At first sight, the EPG method is explicit whereas GPELab's method solves a linear system
at each time step. The time step is denoted by $\Delta_t$ in GPELab, while we denote it by
$h$ in this paper.
We display for each comparison test three convergence criteria (see below)
as well as the number of iterations, the energy and the execution time.
Let $\psi_{n}$ denote the $n^{\text{th}}$ iteration using either EPG or GPELab's algorithm.
We consider and compute the following three convergence criteria:
\begin{enumerate}
	\item The difference between two successive iterations $\|\psi_n-\psi_{n-1}\|^2_\infty$,
	\item The criterion $K^\Delta$ that we developed (see Section \ref{subsec:criterion}),
	\item The energy evolution $|E^\Delta_{\varepsilon,\delta}(\psi_n)-E^\Delta_{\varepsilon,\delta}(\psi_{n-1})|$.
\end{enumerate}
Even though we compute all these quantities in all the numerical experiments in this Section,
we use as a stopping criterion the first criterion above (which is native in the code) for GPELab's
method, and the first two criteria for the EPG method
(in contrast to the previous numerical simulations of Section \ref{sec:numericalresults}
where only the criterion on $K^\Delta$ is used).
For each test case, we compute the final value of the three criteria and we compare them.

\begin{remark}
  \label{rem:GPELab}
  The energy computed by GPELab's method is different from the one given in \eqref{eq:defNRJ}.
  The continuous GPELab's energy reads as follows (see \cite{GPE1,GPE2} for more details):
	\begin{equation*}
	\mathcal{E}_{GPE}(u_1,u_2)=\int_D \sum_{\ell=1}^{2}\bigg[ \frac12 |\nabla u_\ell|^2 + V_\ell|u_\ell|^2
	-\Omega u_\ell^*L_zu_\ell \bigg] + \int_D u^*\frac{\beta}{2} \textbf{F}u,
	\end{equation*}
        where $V_\ell=-\frac{\rho}{2\varepsilon^2}$
        is the confinement function associated to component number $\ell$,
        $\beta=\frac{1}{2\varepsilon^2}$
         is the intra-component interaction, $u^*$ is defined as
        $(\widebar{u_1}, \widebar{u_2})$, the matrix \textbf{F} is given by 
$$\textbf{F}=\begin{pmatrix}
	\beta_{1,1}|u_1|^2+\beta_{1,2}|u_2|^2 & 0 \\
	0 &\beta_{2,1}|u_1|^2+\beta_{2,2}|u_2|^2 
\end{pmatrix},$$
where $\begin{pmatrix}
		\beta_{1,1} & \beta_{1,2}\\
		\beta_{2,1} & \beta_{2,2}
	\end{pmatrix} = \begin{pmatrix}
		1 & \delta\\
		\delta & 1
              \end{pmatrix}$,
and the rotation operator is defined by $L_z = -i\big(x\partial_x - y\partial_x\big)$. 
The relation between the discrete analogue to $\mathcal{E}_{GPE}$ and $E_{\varepsilon,\delta}^\Delta$
(see \eqref{energie_discr}) is the following:
\begin{equation*}
	\mathcal{E}_{GPE}^\Delta=E^\Delta_{\varepsilon,\delta}-\frac{\delta_x\delta_y}{4\varepsilon^2}\sum_{n=0}^{N}\sum_{k=0}^{K}\big( \rho_{n,k}^2 \big)_{\rho>0}.
\end{equation*}
Observe that the difference between the two energies does not depend on $u=(u_1,u_2)$.

In the case of one component condensate (by taking $u_2\equiv0$),
we take $\beta_{1,1}=1$ and $\beta_{1,2}=\beta_{2,1}=\beta_{2,2}=0$,
so that the last term of $\mathcal{E}_{GPE}$ reads
$\int_D u^*\frac{\beta}{2} \textbf{F}u = \int_D \frac{\beta}{2}|u_1|^4$.
\end{remark}

\subsection{One component comparison}
\label{subsec:onecompcomparison}
In this Section, we consider a one component Bose--Einstein condensate.
We compare both algorithms (GPELab's method and EPG) on the same computer.
Each test is done separately.
\subsubsection{Parameters for the EPG method}\label{subsec:com_par_gpe}
We take the following parameters for all the tests.
The function $\rho$ is the same as before (see \eqref{eq:defrho}).
We take $\varepsilon=5\times 10^{-2}$.
For the initial datum, we choose $\psi^1=\frac{1}{5} \exp({-10x^2-10y^2})$.
For the discretization parameters, we take the initial step $h=0.256$, the tolerance  $h_0=10^{-12}$,
the length of the box $2L=14$, the radius of the disk $R=4$, the species ratios $N_1=1$ and $N_2=0$,
the number of points per direction $N=256$ and the convergence tolerance $K_0=10^{-2}$.

\subsubsection{Parameters for GPELab's method}
The equivalent of these parameters in GPELab (with the notations of \cite{GPE1,GPE2}) are
\begin{itemize}
	\item Ncomponents$=1$,
	\item Type='BESP' (for Backward Euler pseudoSPectral scheme),
	\item Delta$=0.5$ (the coefficient in front of the kinetic energy),
	\item Beta$=200$,  which corresponds to 
          $\frac{1}{2\varepsilon^2}$ for $\varepsilon=0.05$,
	\item $V(x,y)=-\frac{1}{2\varepsilon^2}\min[1,10 (R^2-x^2-y^2)]$ (we had to add our own function),
	\item Initial datum: $\psi^1=\frac{1}{5} \exp({-10x^2-10y^2})$ (we had to add it too),
	\item $x_{min}=y_{min}=-7$ and $x_{max}=y_{max}=7$.
\end{itemize}
We had to modify the normalization step after each time step $\Delta_t$.
\subsubsection{Numerical results}
The GPELab algorithm converges whenever the infinity norm of the difference
between two successive iterations is less than a certain value $G_0=\Delta_t \times \text{Stop\_crit}$.
It stops without convergence if the number of iterations done exceeds $10^6$.
In this Section, the EPG method converges if either one of the following statements is true:
at an iteration step $n$, $\|\psi^1_{n}-\psi^1_{n-1}\|_\infty<G_0$ or $K^\Delta < K_0$.

For a first comparison, we set the rotation speed to $\Omega=1$. We also set $\Delta_t= 2\times10^{-3}$ and Stop\_crit$ =  10^{-2}$ so that $G_0=2\times10^{-5}$.
We display the results in Figure \ref{fig:comparison_1_256}.
For a second comparison, we set the rotation speed to $\Omega=3$.  We also set $\Delta_t= 2\times10^{-3}$ and Stop\_crit$ =  10^{-2}$ so that $G_0=2\times10^{-5}$.
We display the results in Figure \ref{fig:comparison_3_256}.
For a third and final comparison, we set the rotation speed to $\Omega=6$. We also set $\Delta_t= 10^{-3}$ and Stop\_crit$ =  10^{-2}$ so that $G_0=10^{-5}$.
The results are displayed in Figure \ref{fig:comparison_6_256}.

\begin{figure}[ht]
	\centering
	\begin{subfigure}[t]{0.45\textwidth}
		\centering
		\includegraphics[width=.8\textwidth]{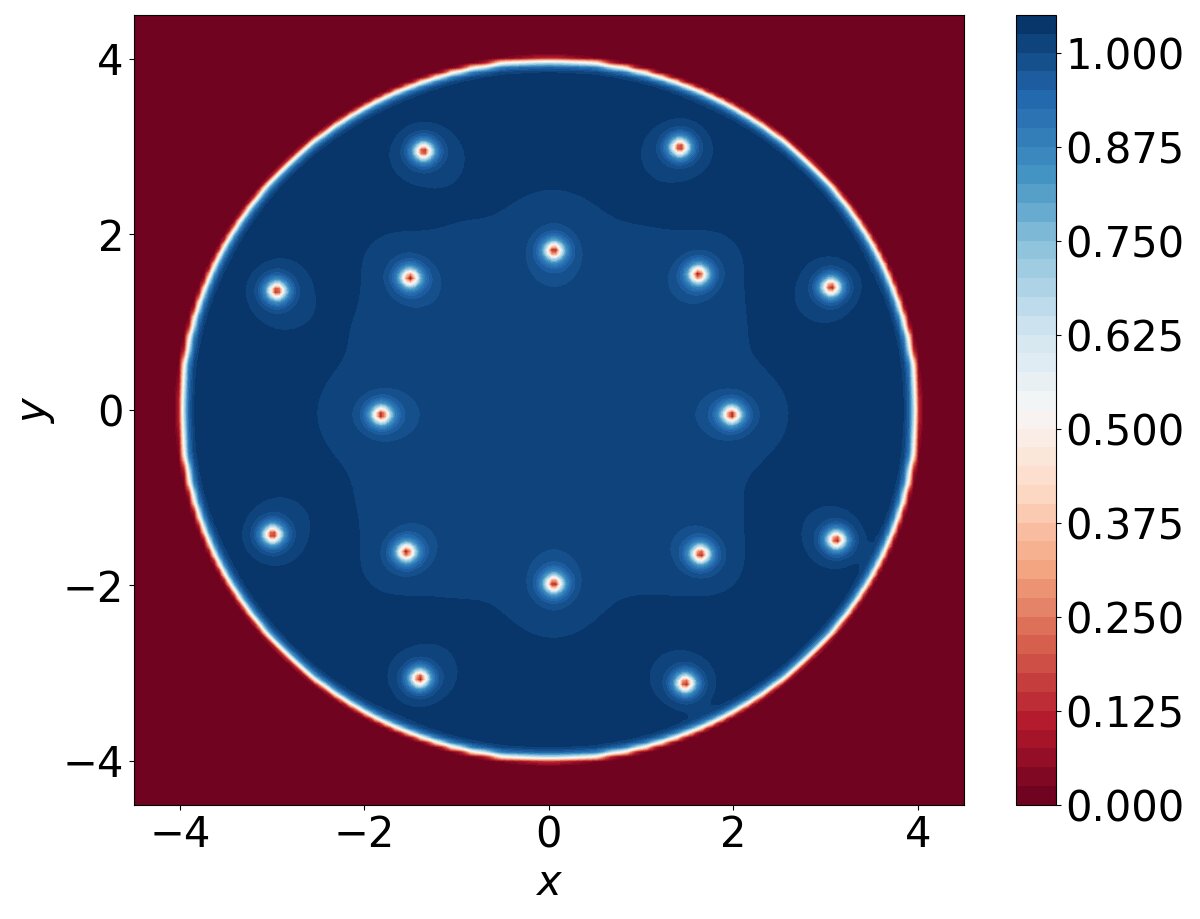}
		\subcaption{A minimizer of the energy $E_{\varepsilon}^\Delta$ using the EPG method.}
	\end{subfigure}
	\hfill
	\begin{subfigure}[t]{0.45\textwidth}
		\centering
		\includegraphics[width=.8\textwidth]{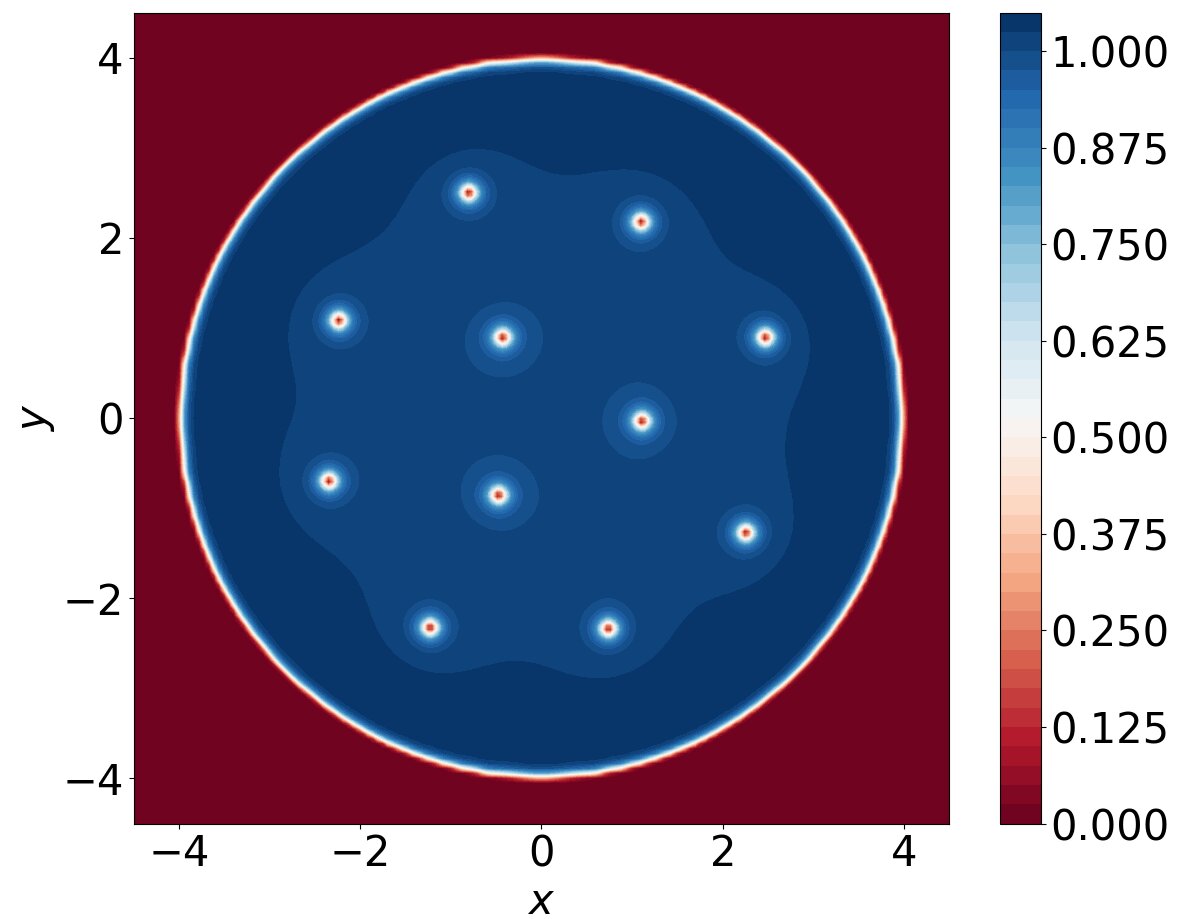}
		\subcaption{A minimizer of the energy $\mathcal{E}_{GPE}$ using the GPELab method.}
	\end{subfigure}
	\vskip\baselineskip
	\begin{center}	
		\begin{tabular}{|| c || c | c | c | c | c | r||}
			\cline{2-7}
			\multicolumn{1}{c|}{} & $E^\Delta_{\varepsilon,\delta}$ & $G_0$ & $K^\Delta$ & \# of iterations & $E_{\varepsilon,\delta}^\Delta(u_n)-E_{\varepsilon,\delta}^\Delta(u_{n-1})$ & Time (s) \\
			\cline{2-7} \hline
			GPELab & $38$ & $2\times10^{-5}$ & $0.579$ & $129569$ & $-1.4\times10^{-5}$ & $95\thinspace189$ \\
			\hline 
			EPG & $83$ & $2\times10^{-5}$ & $0.011$ & $271041$ & $-4.5\times10^{-5}$ & $24\thinspace688$\\ 
			\hline
		\end{tabular}
	\end{center}
	\caption{Comparison between GPELab and EPG results for $\Omega=1$.}
	\label{fig:comparison_1_256}
\end{figure}

\begin{figure}[ht]
	\centering
	\begin{subfigure}[t]{0.45\textwidth}
		\centering
		\includegraphics[width=.8\textwidth]{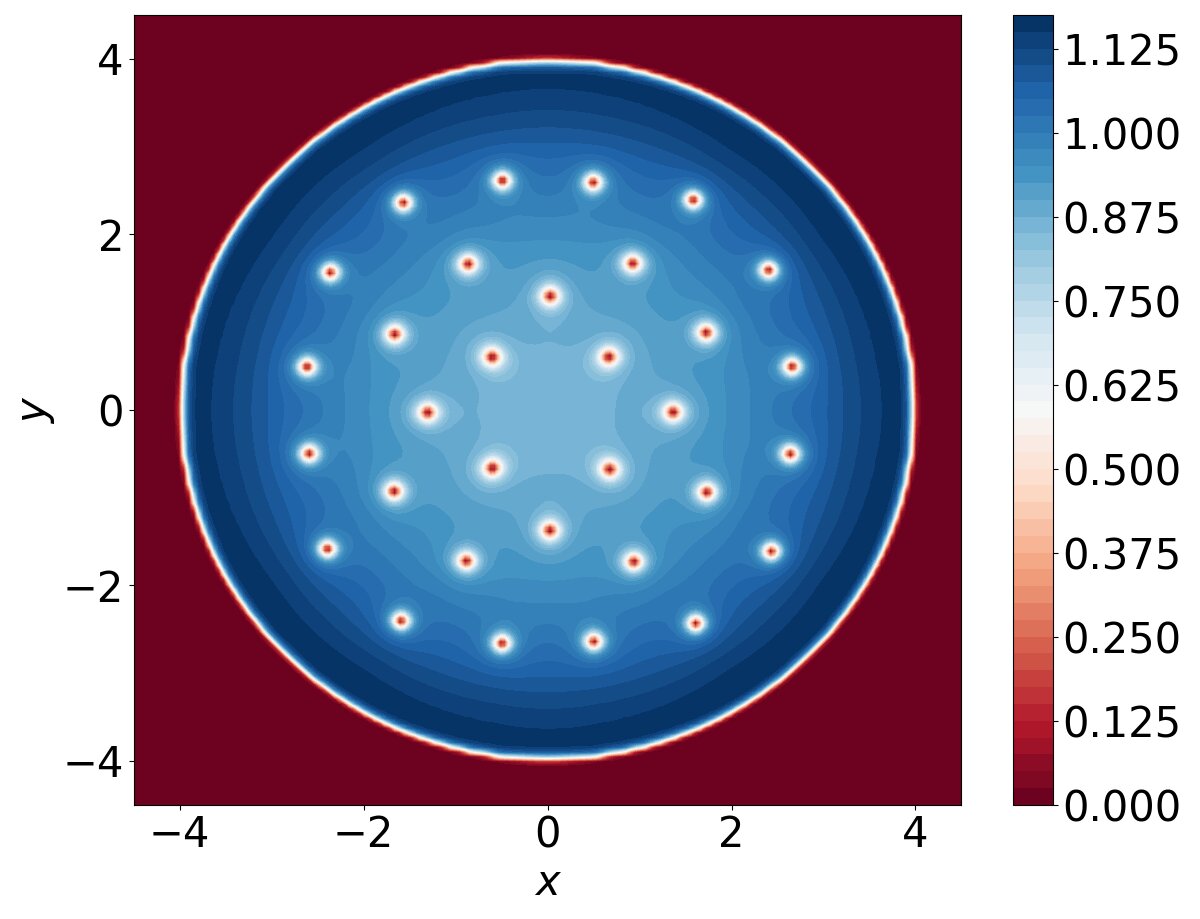}
		\subcaption{A minimizer of the energy $E_{\varepsilon}^\Delta$ using the EPG method.}
	\end{subfigure}
	\hfill
	\begin{subfigure}[t]{0.45\textwidth}
		\centering
		\includegraphics[width=.8\textwidth]{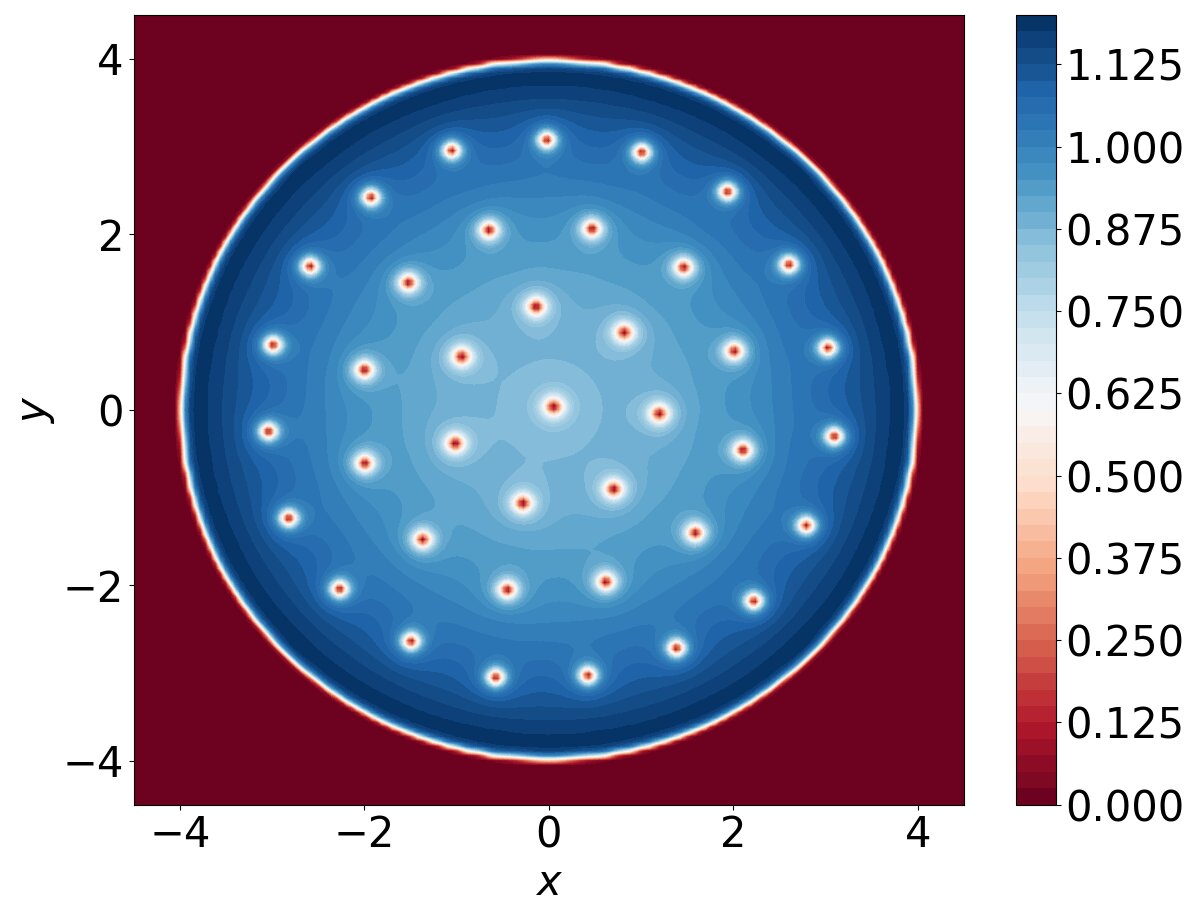}
		\subcaption{A minimizer of the energy $\mathcal{E}_{GPE}$ using the GPELab method.}
	\end{subfigure}
	\vskip\baselineskip	
	\begin{center}
		\begin{tabular}{|| c || c | c | c | c | c | r||}
			\cline{2-7}
			\multicolumn{1}{c|}{} & $E^\Delta_{\varepsilon,\delta}$ & $G_0$ & $K^\Delta$ & \# of iterations & $E_{\varepsilon,\delta}^\Delta(u_n)-E_{\varepsilon,\delta}^\Delta(u_{n-1})$ & Time(s) \\
			\cline{2-7} \hline
			GPELab & $-1419$ & $2\times 10^{-5}$ & $0.05$ & $56507$ & $-1.3\times10^{-5}$ & $60\thinspace209$ \\
			\hline 
			EPG & $-1428$ & $2.4\times10^{-5}$ & $0.01$ & $182346$ & $-3.5 \times 10^{-5}$ & $16\thinspace609$\\ 
			\hline
		\end{tabular}
	\end{center}	
	\caption{Comparison between GPELab and EPG results for $\Omega=3$.}
	\label{fig:comparison_3_256}
\end{figure}

\begin{figure}[ht]
	\centering
	\begin{subfigure}[t]{0.45\textwidth}
		\centering
		\includegraphics[width=.8\textwidth]{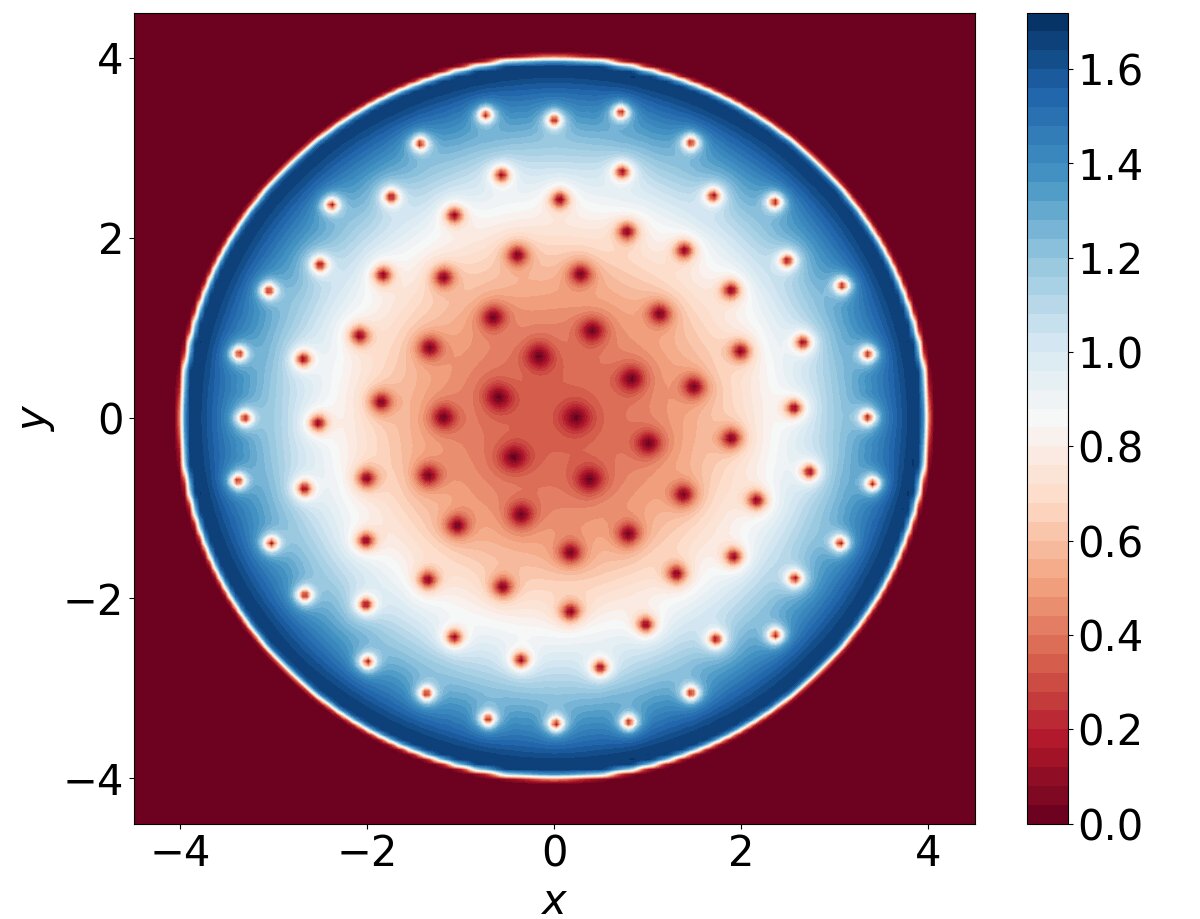}
		\subcaption{A minimizer of the energy $E_{\varepsilon}^\Delta$ using the EPG method.}
	\end{subfigure}
	\hfill
	\begin{subfigure}[t]{0.45\textwidth}
		\centering
		\includegraphics[width=.8\textwidth]{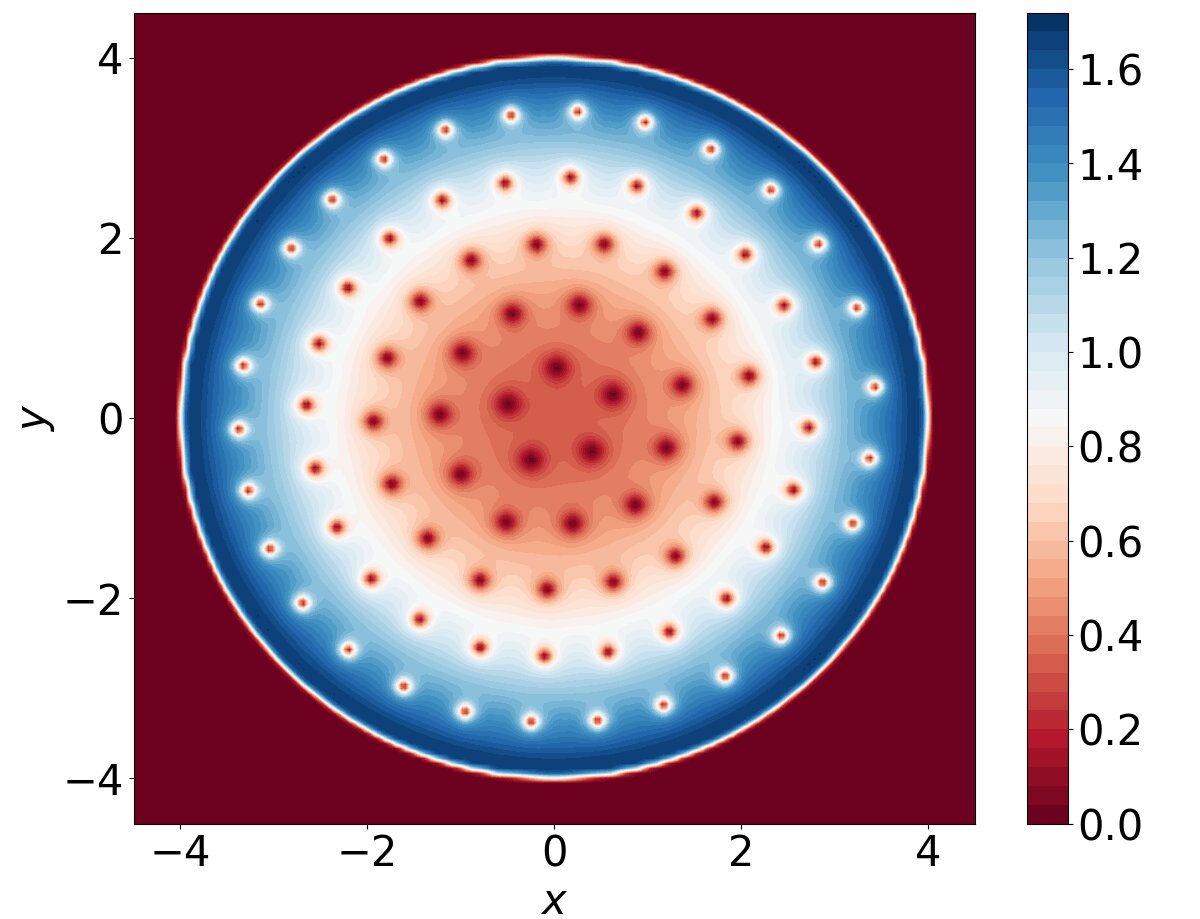}
		\subcaption{A minimizer of the energy $E_{\varepsilon}^\Delta$ using the GPELab method.}
	\end{subfigure}
	\vskip\baselineskip	
	\begin{center}
		\begin{tabular}{|| c || c | c | c | c | c | r||}
			\cline{2-7}
			\multicolumn{1}{c|}{} & $E^\Delta_{\varepsilon,\delta}$ & $G_0$ & $K^\Delta$ & \# of iterations & $E_{\varepsilon,\delta}^\Delta(u_n)-E_{\varepsilon,\delta}^\Delta(u_{n-1})$ & Time(s) \\
			\cline{2-7} \hline
			GPELab & $-7334$ & $10^{-5}$ & $0.0699$ & $224473$ & $-4.3\times10^{-4}$ & $199\thinspace321$ \\
			\hline 
			EPG & $-7338$ & $5\times10^{-6}$ & $0.00578$ & $707141$ & $-1.1 \times 10^{-5}$ & $64\thinspace942$\\ 
			\hline
		\end{tabular}
	\end{center}	
	\caption{Comparison between GPELab and EPG results for $\Omega=6$.}
	\label{fig:comparison_6_256}
      \end{figure}

        The numerical results displayed in Figures \ref{fig:comparison_1_256},
        \ref{fig:comparison_3_256} and \ref{fig:comparison_6_256} show that
        the methods may converge to different minimizers, with different energies
          (see Figure \ref{fig:comparison_1_256}),
          as well as they may converge to similar minimizers with similar energies
          (see Figure \ref{fig:comparison_6_256}),
          taking into account suitable rescaling of the discrete energies
          (as indicated in Remark \ref{rem:GPELab}).
          Moreover, the qualitative behaviour of both methods is correct
          in the sense that higher rotations end up in creating more vortices,
          and the beginning of a central hole when $\Omega=6$
          (see Figure \ref{fig:comparison_6_256}).
          Note that the EPG method uses more interations in all cases.
          However, since the EPG method is explicit and the GPELab method is linearly implicit,
          the EPG method is faster in all cases.
          The order of magnitude of the speed up is $3$.
          Finally, the stopping criterion $K^\Delta$ is much smaller in minimizers obtained by EPG
          than in that computed using GPELab.
      
\subsection{Two components comparison}
In this section, we consider two components Bose--Einstein condensates.
We also compare both algorithms on the same computer.
Each test is done separately.

\subsubsection{Parameters for the EPG method}
We take the following parameters for all the tests.
We take $\varepsilon=5\times 10^{-2}$.
For the initial datum, we choose $\psi^1=\psi^2=\frac{1}{5} \exp({-10x^2-10y^2})$.
For the discretization parameters we take the values $N_1=0.55$ and $N_2=0.45$,
$N=256$ and $K_0=10^{-2}$.
For the rest of the parameters, we refer to \ref{subsec:com_par_gpe}.

\subsubsection{Parameters for GPELab's method}
The equivalent of these parameters in GPELab are
\begin{itemize}
	\item Ncomponents$=2$,
	\item Type='BESP',
	\item Delta$=0.5$ (the coefficient in front of the kinetic energy),
	\item Beta$=200$,
	\item Beta\textunderscore coupled$\displaystyle=\begin{pmatrix}
		1 & \delta \\
		\delta & 1 \\
              \end{pmatrix},$
	\item $V(x,y)=-\frac{1}{2\varepsilon^2}\min[1,10 (R^2-x^2-y^2)]$ (the same function created before),
	\item Initial datum: $\psi^1=\psi^2=\frac{1}{5} \exp({-10x^2-10y^2})$,
	\item $\Delta_t= 5\times10^{-4}$ and Stop\_crit$ =  2\times10^{-2}$ ($G_0=10^{-5}$),
	\item $x_{min}=y_{min}=-7$ and $x_{max}=y_{max}=7$.
\end{itemize}
We also had to modify the normalization step after each time step $\Delta_t$.
\subsubsection{Numerical results}
For a first comparison, we set the rotation speed to $\Omega=3$
and the interaction strength to $\delta=0.7$ (coexistence). 
We display the results in Figure \ref{fig:comparison2_3_256}.
For a second comparison, we set the rotation speed to $\Omega=3$
and the interaction strength to $\delta=1.5$ (segregation).
For the EPG method, to break off the symmetry, we chose the following initial datum
$$ \psi^1=\frac15 \exp(-10(x-0.5)^2-10(y+0.2)^2), \qquad \psi^2=\frac15 \exp(-10(x+0.5)^2-10(y-0.3)^2).$$
We display the results in Figure \ref{fig:comparison2_3_256_seg}.

\begin{figure}[ht]
	\centering
	\begin{subfigure}[t]{0.45\textwidth}
		\centering
		\includegraphics[width=.8\textwidth]{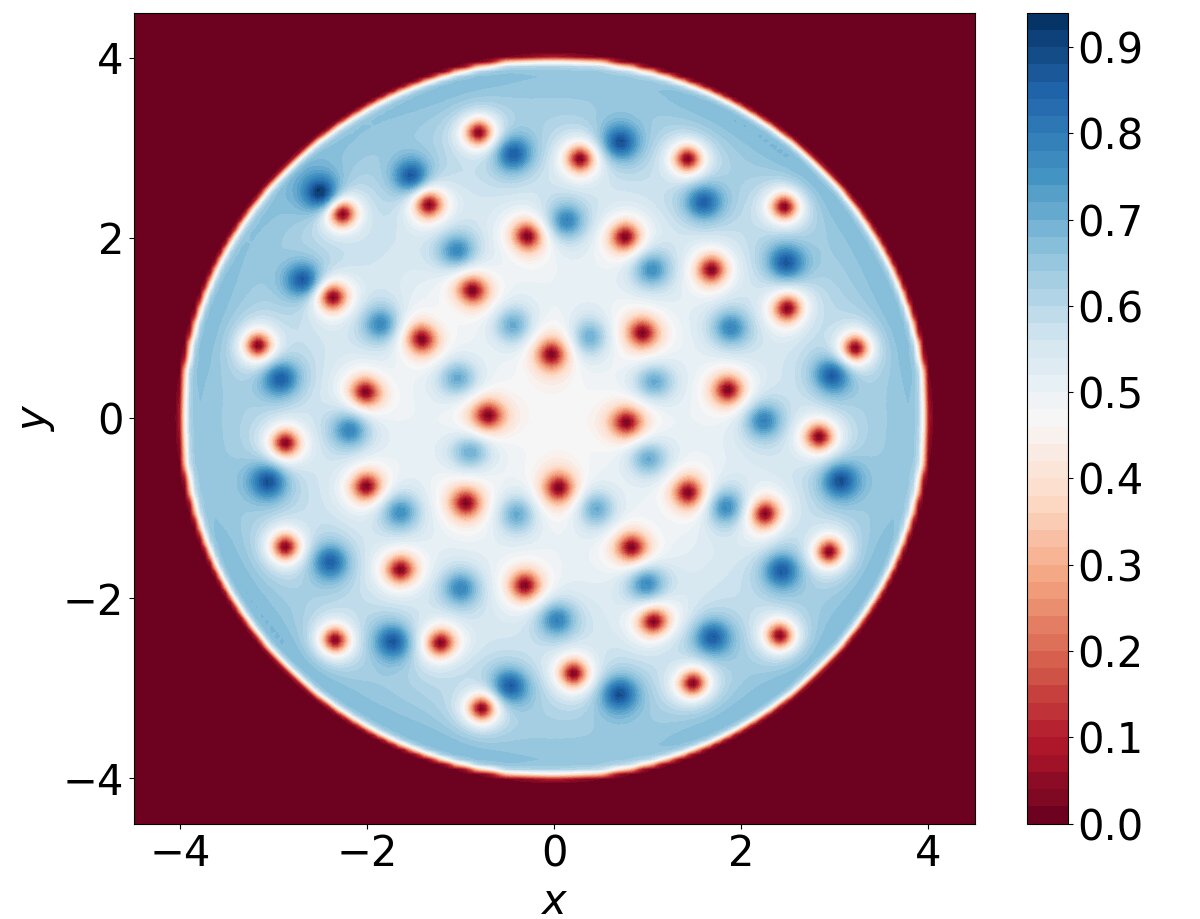}
		\subcaption{First component of a minimizer of the energy $E_{\varepsilon}^\Delta$ using the EPG method.}
	\end{subfigure}
	\hfill
	\begin{subfigure}[t]{0.45\textwidth}
		\centering
		\includegraphics[width=.8\textwidth]{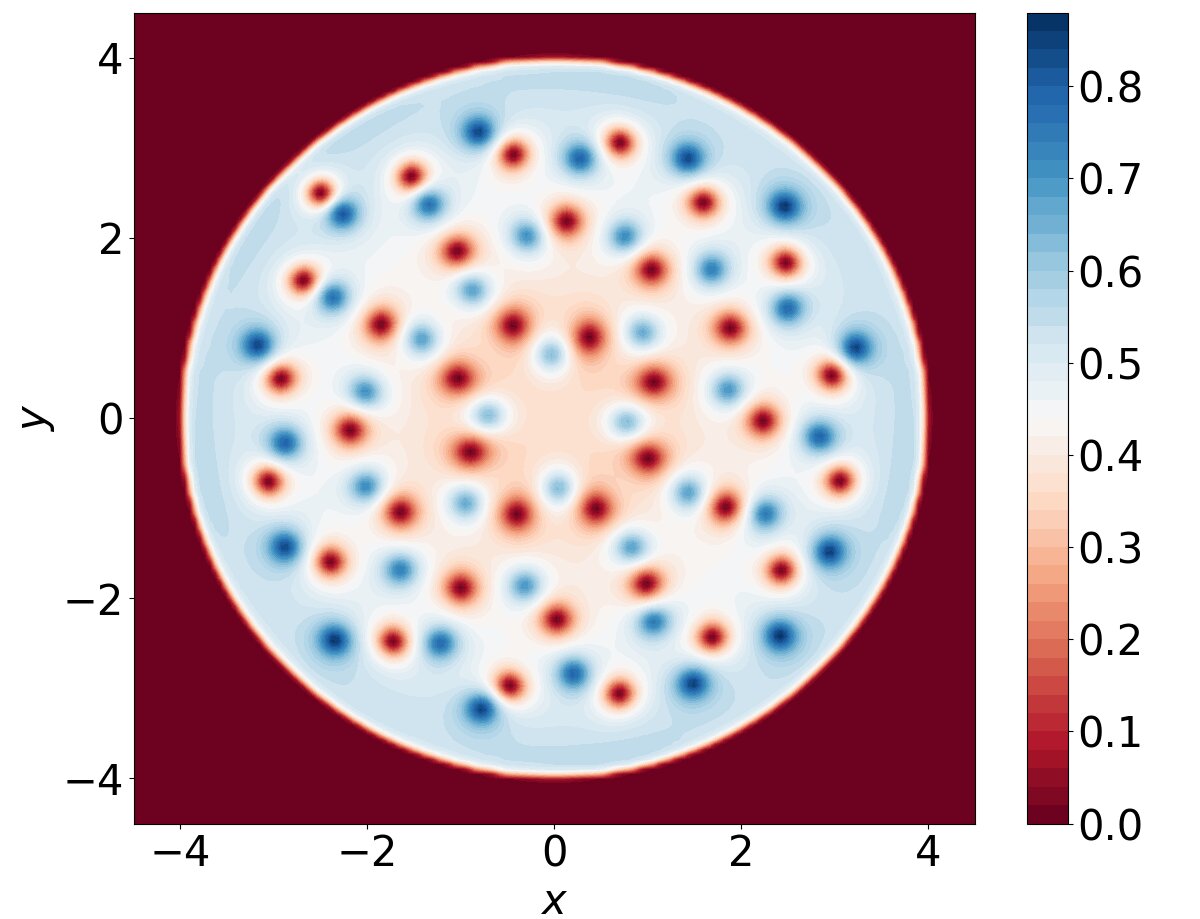}
		\subcaption{Second component of a minimizer of the energy $E_{\varepsilon}^\Delta$ using the EPG method.}
	\end{subfigure}
	\vskip\baselineskip	
	\centering
	\begin{subfigure}[t]{0.45\textwidth}
		\centering
		\includegraphics[width=.8\textwidth]{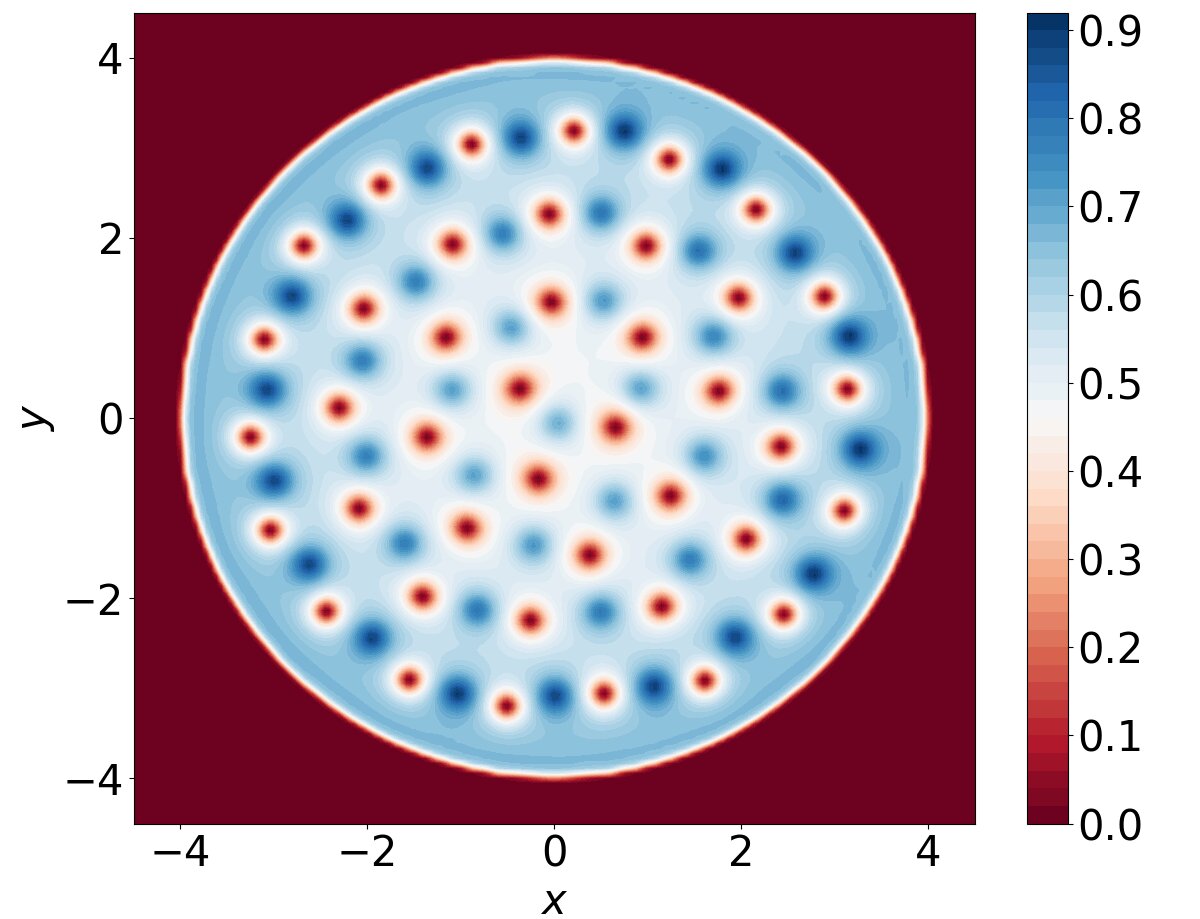}
		\subcaption{First component of a minimizer of the energy $E_{\varepsilon}^\Delta$ using the GPELab method.}
	\end{subfigure}
	\hfill
	\begin{subfigure}[t]{0.45\textwidth}
		\centering
		\includegraphics[width=.8\textwidth]{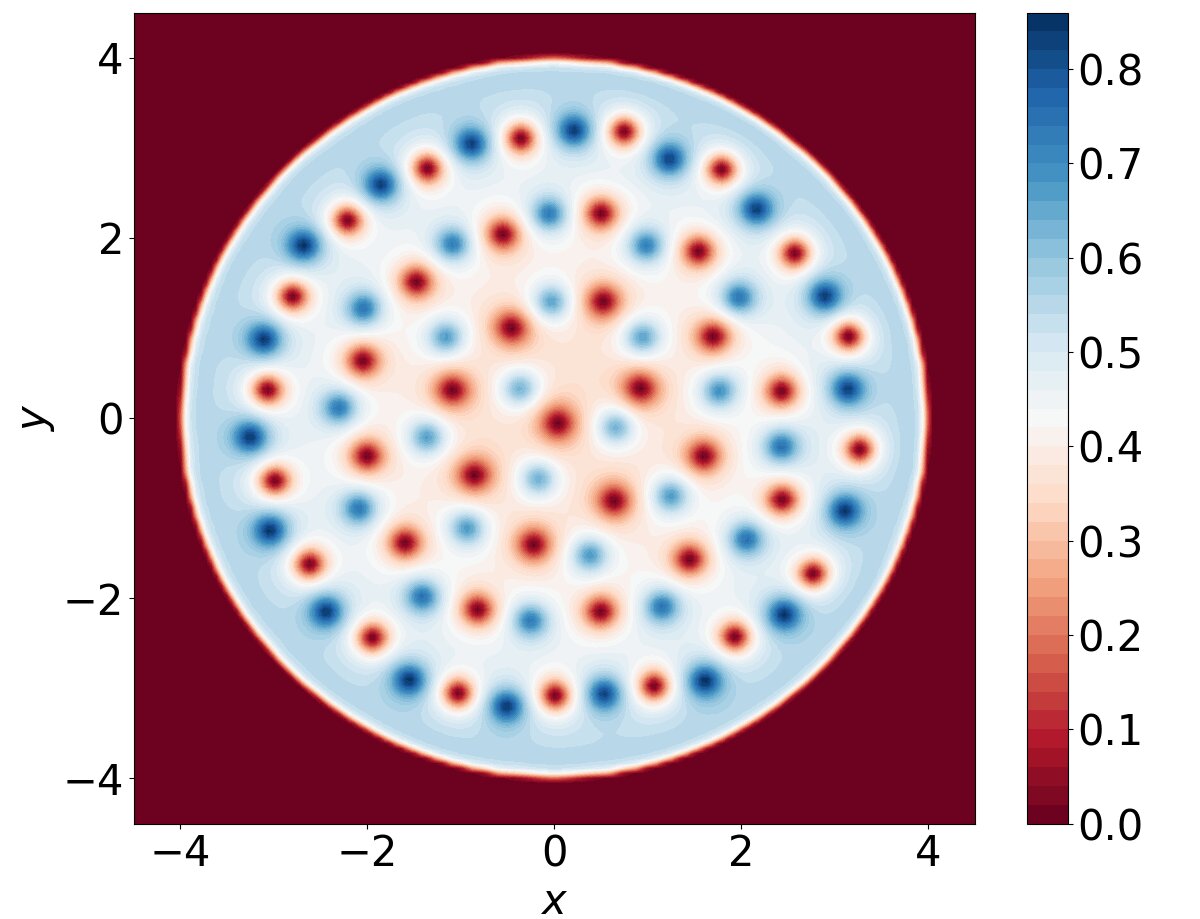}
		\subcaption{Second component of a minimizer of the energy $E_{\varepsilon}^\Delta$ using the GPELab method.}
	\end{subfigure}
	\vskip\baselineskip
	\begin{center}
		\begin{tabular}
			{|| c || c | c | c | c | c | r||}
			\cline{2-7}
			\multicolumn{1}{c|}{} & $E^\Delta_{\varepsilon,\delta}$ & $G_0$ & $K^\Delta$ & \# of iterations & $E_{\varepsilon,\delta}^\Delta(u_n)-E_{\varepsilon,\delta}^\Delta(u_{n-1})$ & Time(s) \\
			\cline{2-7} \hline
			GPELab & $-2234$ & $10^{-5}$ & $0.07$ & $206833$ & $-2.3\times10^{-6}$ & $163\thinspace690$ \\
			\hline 
			EGP & $-2227$ & $10^{-5}$ & $0.01$ & $259383$ & $-2 \times 10^{-5}$ & $50\thinspace527$\\ 
			\hline
		\end{tabular}
	\end{center}	
	\caption{Comparison between GPELab and EPG results for $\Omega=3$ and $\delta=0.7$.}
	\label{fig:comparison2_3_256}
\end{figure}

\begin{figure}[ht]
	\centering
	\begin{subfigure}[t]{0.45\textwidth}
		\centering
		\includegraphics[width=.8\textwidth]{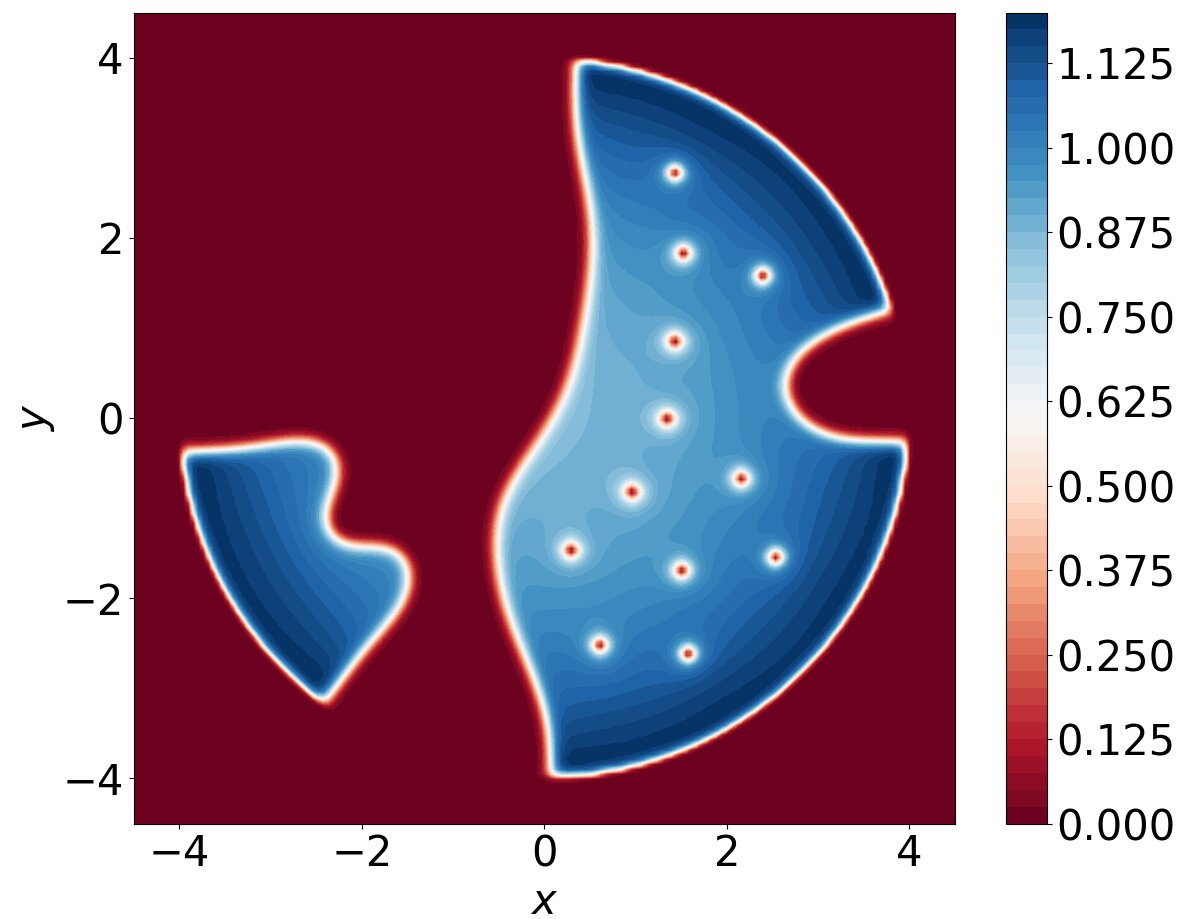}
		\subcaption{First component of a minimizer of the energy $E_{\varepsilon}^\Delta$ using the EPG method.}
	\end{subfigure}
	\hfill
	\begin{subfigure}[t]{0.45\textwidth}
		\centering
		\includegraphics[width=.8\textwidth]{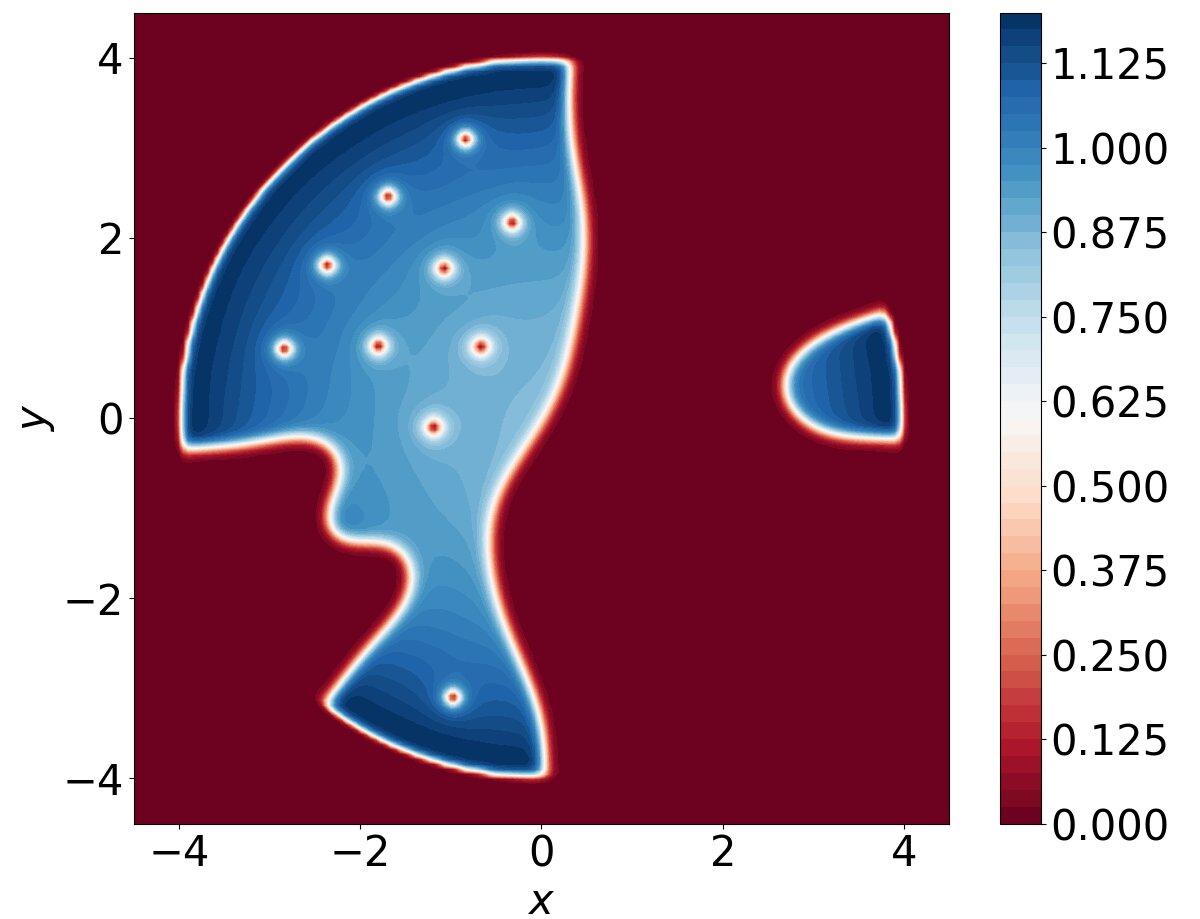}
		\subcaption{Second component of a minimizer of the energy $E_{\varepsilon}^\Delta$ using the EPG method.}
	\end{subfigure}
	\vskip\baselineskip	
	\centering
	\begin{subfigure}[t]{0.45\textwidth}
		\centering
		\includegraphics[width=.8\textwidth]{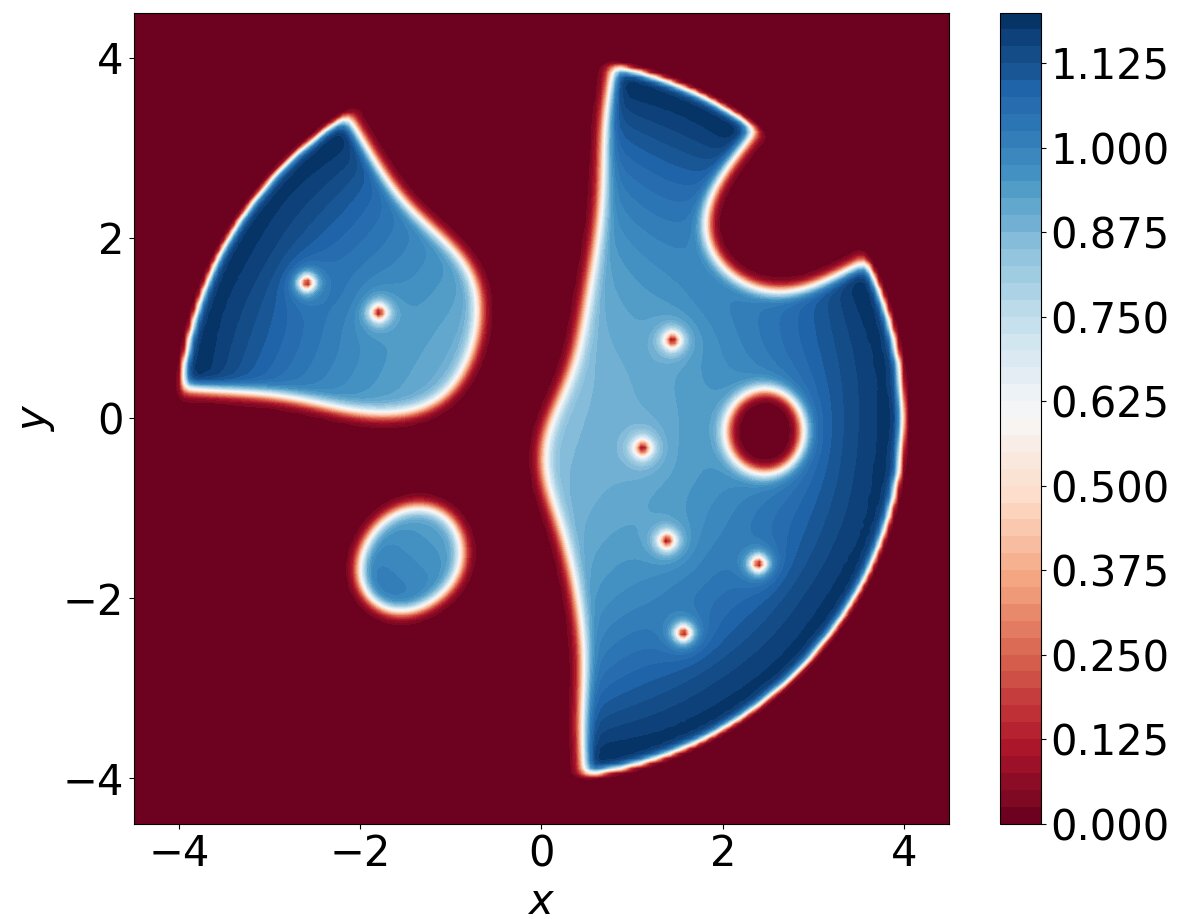}
		\subcaption{First component of a minimizer of the energy $E_{\varepsilon}^\Delta$ using the GPELab method.}
	\end{subfigure}
	\hfill
	\begin{subfigure}[t]{0.45\textwidth}
		\centering
		\includegraphics[width=.8\textwidth]{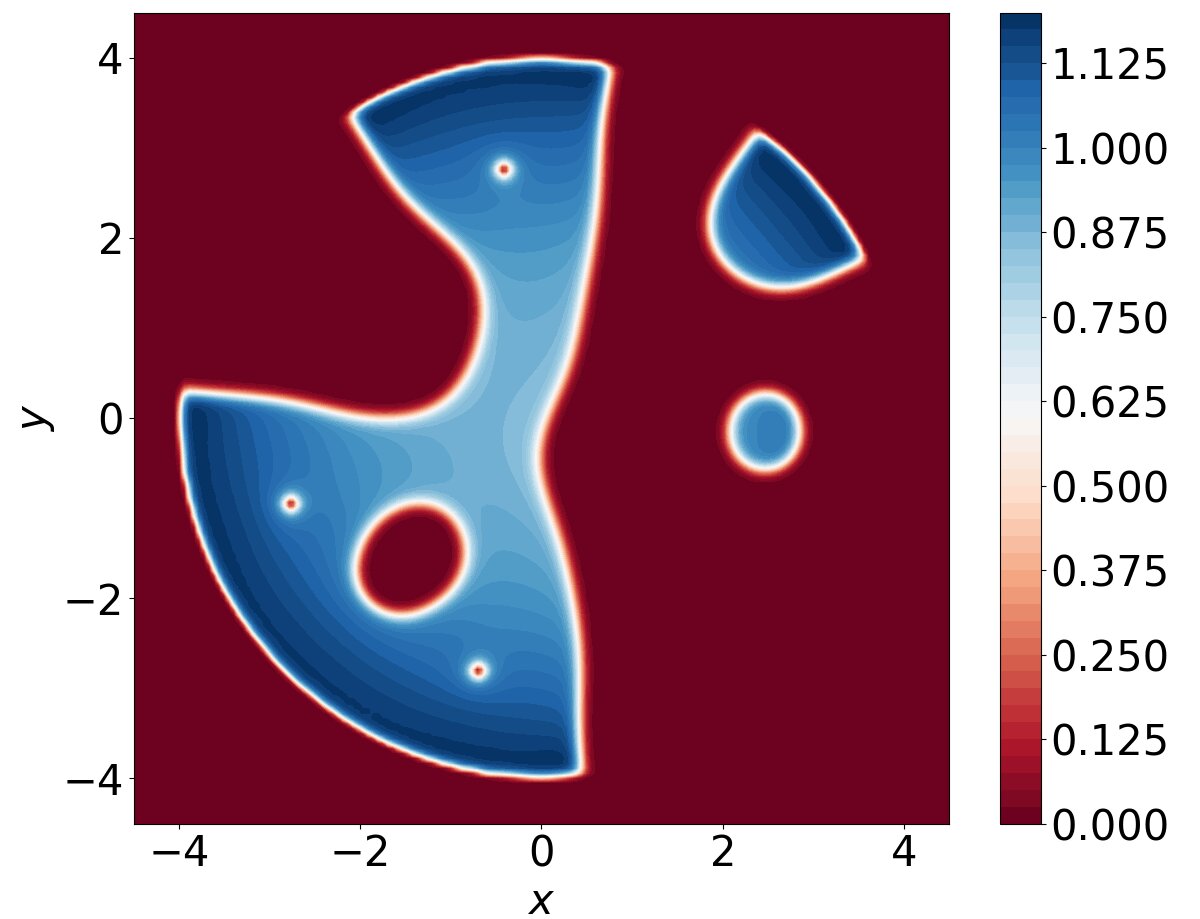}
		\subcaption{Second component of a minimizer of the energy $E_{\varepsilon}^\Delta$ using the GPELab method.}
	\end{subfigure}
	\vskip\baselineskip
	\begin{center}
		\begin{tabular}
			{|| c || c | c | c | c | c | r||}
			\cline{2-7}
			\multicolumn{1}{c|}{} & $E^\Delta_{\varepsilon,\delta}$ & $G_0$ & $K^\Delta$ & \# of iterations & $E_{\varepsilon,\delta}^\Delta(u_n)-E_{\varepsilon,\delta}^\Delta(u_{n-1})$ & Time(s) \\
			\cline{2-7} \hline
			GPELab & $-1396$ & $10^{-5}$ & $0.07$ & $508797$ & $-10^{-6}$ & $1\thinspace526\thinspace529$ \\
			\hline 
			EGP & $-1389$ & $10^{-5}$ & $0.01$ & $321882$ & $-1.7 \times 10^{-5}$ & $62\thinspace745$\\ 
			\hline
		\end{tabular}
	\end{center}	
	\caption{Comparison between GPELab and EPG results for $\Omega=3$ and $\delta=1.5$.}
	\label{fig:comparison2_3_256_seg}
\end{figure}

The results displayed in Figures \ref{fig:comparison2_3_256} and \ref{fig:comparison2_3_256_seg} both
indicate that the methods may converge to different minimizers, even if their energies are
similar (see Remark \ref{rem:GPELab} for details on the rescaling).
The qualitative behaviour of both methods is correct in the sense that they provide coexisting
minimizers when $\delta=0.7$  and segregated minimizers when $\delta=1.5$.
The number of iterations in both methods has the same order of magnitude
(the maximal ratio is of order $2$), in contrast to the one component simulations carried out
in Section \ref{subsec:onecompcomparison}.
The EPG method is much faster in both cases: The order of magnitude of the speed up is $3$
when $\delta=0.7$ and $25$ when $\delta=1.5$.
Finally, the stopping criterion $K_\Delta$ is much smaller in minimizers obtained by EPG
than in that computed using GPELab.

\section{Conclusion}

In this paper, we considered a Gross--Pitaevskii energy as a model for rotating
one component and two components Bose--Einstein condensates in two dimensions
in a strong confinement and strong rotation regime. 
First, we have introduced a discretization of this energy, using the FFT scheme
and Plancherel's equality, which allowed a reasonable computation of the energy gradients.
Second, in contrast to the literature, we proposed a minimization method for this discrete energy
using an explicit $L^2$ gradient method with projection (EPG).
This method allowed for the derivation of a stopping criterion. 
Third, we introduced two post processing algorithms for the numerical minimizers.
One is aimed for the single vortices and the other for vortex sheets.
Both allow to detect these structures and compute their indices.
Fourth, we have ran our methods and algorithms 
for different physical regimes from one component condensates with high rotation to two components
condensates in coexistence and segregation regimes.
This validates recent theoretical results as well as it supports conjectures
(as for example the existence of vortex sheets in the segregation regime). 
Last, we have compared the efficiency of the EPG method to that of the GPELab method \cite{GPE2}.
On all the numerical tests presented, EPG appears to be, roughly speaking, at least
$3$ times faster than GPELab.
\section*{Acknowledgments}
This work was partially supported by the Labex CEMPI (ANR-11-LABX-0007-01).

\bibliographystyle{abbrv}
\bibliography{labib}

\end{document}